\providecommand{\U}[1]{\protect\rule{.1in}{.1in}}
\newtheorem{theorem}{Theorem}
\newtheorem{condition}[theorem]{Condition}
\newtheorem{definition}[theorem]{Definition}
\newtheorem{lemma}[theorem]{Lemma}
\newtheorem{remark}[theorem]{Remark}
\newenvironment{proof}[1][Proof]{\noindent\textbf{#1.} }{\ \rule{0.5em}{0.5em}}
\begin{document}

\title{On constraint preservation and strong hyperbolicity}
\author{J. Fernando Abalos\\FAMAF-CIEM, Universidad Nacional de C\'{o}rdoba, \\Ciudad Universitaria, 5000, C\'{o}rdoba, Argentina}
\date{{November 2021}}
\maketitle

\begin{abstract}
We use partial differential equations (PDEs) to describe physical systems. In
general, these equations include evolution and constraint equations. One
method used to find solutions to these equations is the Free-evolution
approach, which consists in obtaining the solutions of the entire system by
solving only the evolution equations. Certainly, this is valid only when the
chosen initial data satisfies the constraints and the constraints are
preserved in the evolution. In this paper, we establish the sufficient
conditions required for the PDEs of the system to guarantee the constraint
preservation. This is achieved by considering quasi-linear first-order PDEs,
assuming the sufficient condition and deriving strongly hyperbolic first-order
partial differential evolution equations for the constraints. We show that, in
general, these constraint evolution equations correspond to a family of
equations parametrized by a set of free parameters. We also explain how these
parameters fix the propagation velocities of the constraints.

As application examples of this framework, we study the constraint
conservation of the Maxwell electrodynamics and the wave equations in
arbitrary space-times. We conclude that the constraint evolution equations are
unique in the Maxwell case and a family in the wave equation case. 

\newpage

\end{abstract}
{\small\tableofcontents}

\section{Introduction}

Physical systems are described by sets of partial differential equations
(PDEs). In general, these equations can be expressed as first-order partial
derivatives, leading us to focus on the study of first-order quasi-linear
PDEs. These equations (or systems) usually include gauge freedoms and
differential constraints (see \cite{geroch1996partial},
\cite{Hilditch:2013ila}). Namely, degrees of freedom (variables) that are not
determined by the equations and degrees of freedom that are restricted to a
lower-dimensional subspace, respectively. In this paper we do not consider
gauge freedoms, we assume that they have already been fixed if the system
includes them, and instead we concentrate on the study of the constraints.
Furthermore, in our development, we do not assume the presence of a background
metric, so the relativistic systems are included in our analysis but we do not
restrict exclusively to them.

To study the above mentioned physical systems, the space-time $M$ is foliated
as $M=\left[  0,T\right]  \times\underset{0\leq t\leq T}{\cup}\Sigma_{t}$ and
the PDEs are divided into two subsets, evolution and constraints equations.
The evolution equations determine how the variables change along the different
hypersurfaces $\Sigma_{t}$, with $0<t\leq T$, while the constraint equations
restrict the allowed values of the variables on each $\Sigma_{t}$. In order
for these two subsets to have predictive power, the associated Cauchy problem
should be well-posed (see \cite{gustafsson1995time}, \cite{kreiss2004initial},
\cite{sarbach2012continuum}). This means that given an initial data over
$\Sigma_{t}$, the associated solutions of the evolution equations exist, are
unique, continuous with respect to the initial data and satisfy the
constraints. Although, we emphasize that the evolution equations describing a
physical system are not unique, since they can be modified by adding
constraint terms to them. Such freedom can lead to well- or ill-posed
evolution equations \cite{kreiss2004initial}, \cite{abalos2017necessary}. In
this paper, we restrict our attention to one class of equations within the
well-posed ones, the strongly hyperbolic (SH). In this class, the recipe for
finding strongly hyperbolic evolution equations has been given in
\cite{Abalos:2018uwg}.

Several methods are used to find numerical and analytical solutions to the
PDEs. Among the most widely used is the free-evolution approach. In this
approach, one begins by verifying that when constraints are initially
satisfied they will remain preserved during the evolution, a property commonly
referred to as constraint preservation (or conservation). Then, one solves the
evolution equations for initial data which satisfy the constraints; and due to
the constraints preservation, this method automatically yields solutions of
the complete system. Some relevant numerical implementations of this approach
can be founded in reviews \cite{Frauendiener:2011zz, Lehner:2014asa,
sarbach2012continuum, shibata2015numerical, Palenzuela:2020tga} and references therein.

The present paper addresses the study of the constraint preservation from a
PDEs point of view, by establishing sufficient conditions that the equations
of a physical system have to satisfy to guarantee the constraint preservation.
The standard method to verify this conservation is: 1) deriving a set of
evolution equations for the constraints, 2) establishing that zero constraints
are a solution of this system and 3) checking that this system is strongly
hyperbolic. When these steps are satisfied, they analytically guarantee that
the vanishing solution is unique and therefore that the constraints are
preserved. Moreover, the strong hyperbolicity is used to find which boundary
conditions preserve the constraints (see \cite{Calabrese:2002xy},
\cite{Tarfulea:2013qjq}, \cite{sarbach2012continuum} \cite{Calabrese:2002xy}).

What do we refer to by "a set of evolution equations for the constraints"?. It
is a set of partial differential evolution equations whose variables are the
constraints. This set of equations is called subsidiary system (SS). \ Most
well-known physical systems have quasi-linear first-order partial differential
subsidiary systems. Some examples are: Maxwell \cite{Calabrese2004ARF}, and
Non-linear \cite{PhysRevD.92.084024} Electrodynamics, Einstein
\cite{Frittelli:1996nj}, \cite{Shinkai:2008yb}, Einstein-Christoffel
\cite{Calabrese:2002xy}\ ,BSSN \cite{Yoneda:2002kg}, \cite{Beyer2004OnTW}, ADM
\cite{Kidder:2001tz}, \cite{Yoneda:2001iy}, $n+1$ ADM \cite{Shinkai:2003vu},
$f\left(  R\right)  $-Gravity \cite{Paschalidis:2011ww},
\cite{Mongwane:2016qtz}, Bimetric Relativity \cite{Kocic:2018yvr} theories,
etc. The process for deriving the SS and verifying its strong hyperbolicity
is\ conducted separately for each physical theory and usually involves very
cumbersome calculations.To assess this problem, we present a theory that
simplifies and automatizes the process.

In \cite{reula2004strongly}, Reula studies the constant coefficient case,
assumes the existence of a first-order SS and explains how the characteristic
structure of this system connects with the characteristic structure of the
evolution equations of the system. Here, we focus on the quasi-linear case,
show which conditions guarantee the existence of a SS and explain how the
Reula result arises naturally from our results. Indeed, we show that the
principal symbol of the SS is a simple projection of some tensorial objects
$C_{A}^{a\Gamma}$ called here Geroch fields (see \cite{geroch1996partial}). We
also show that the resulting SSs are not unique. In fact, they are families of
equations with free parameters letting to choose the propagation velocities of
the constraints.

Finally, we restrict ourselves to the constant-coefficient case to simplify
the discussion and give sufficient conditions that the original set of PDEs
has to satisfy for having SH evolution equations and an associated SH
subsidiary system. To reach this result we give a detailed description of the
characteristic structure of both subsystems and explain how they should be
chosen to make them SH.

The constraints studied here are called first-class constraints in their
Hamiltonian version, see for example \cite{Henneaux:1992ig},
\cite{Giulini:2015qha}, \cite{Hilditch:2013ila}. We highlight the work of
Hilditch and Richter \cite{Hilditch:2013ila}, where a similar problem to the
one presented here is studied with a different approach.

There are widely used methods that introduce extra variables to the system and
avoid dealing with constraints. Some of the most popular ones are the
$\lambda-$systems \cite{brodbeck1999einstein}, the divergence cleaning
\cite{munz2000three}, \cite{dedner2002hyperbolic} for electrodynamics, the Z4
systems\ \cite{Bona:2002fq, Bona:2002ft, Bona:2003fj, Bona:2004yp,
Bona:2004ky, Alic:2011gg, Gundlach:2005eh, Bernuzzi:2009ex} and the modified
harmonic gauge \cite{Kovacs:2020ywu} for Einstein equations. However, for
these systems to work properly, the constraints conservation of the original
system has to be satisfied. Therefore, this aspect of the problem is crucial.

The outline of the paper is the following. In Section \ref{Seccion_Setting_1},
we introduce the Geroch fields, used to obtain the expressions for the
constraints; the reductions, that select the evolution equations; and the
integrability conditions of the system, which allow to obtain first-order
partial derivatives SSs. In section \ref{Teorema_1}, we introduce the
expressions of the SSs in the first main theorem and the proof to this
theorem. In sections \ref{Sec_Const_coeff_1} and \ref{teorema_2}, we present
the well-posed concepts and our second main theorem about the strong
hyperbolicity of the SS, respectively. This section include the subsection
\ref{Proof_Theorem_coef_const_2}, where we provide the proof of the second
main theorem. This proof is splited in many subsubsections which include the
analysis of the Kronecker decomposition of the principal symbol of the system
and of the SS (including the constraints of the constraints). In subsection
\ref{teo_3}, we discuss how to suppress one condition of the second main
theorem. In section \ref{Examples}, we present two examples of application of
the developed formalism: the Maxwell electrodynamics and the wave equation. In
section \ref{Conclusions}, we briefly discuss the results and make some
comments on future work. In appendix \ref{Ap_lemmas}, we introduce some lemmas
that are used to find the Kronecker decomposition of a pencil. They are used
in the proof of Theorem \ref{teorema_2}. Finally, in appendix
\ref{App_coordenadas}, we discuss the introduction of the lapse and shift
variables in the $n+1$ foliations. This is done in the simple system
$\nabla_{a}q^{b}=0$, in cases with and without a metric.

\section{Setting\label{Seccion_Setting_1}}

Following Geroch's notation, let $b\overset{\pi}{\rightarrow}M$ a fiber bundle
over a space-time $M$ with $\dim M=n+1$. We consider the cross-section
$\phi^{\alpha}:M\rightarrow b$, which defines the physics fields and satisfies
the equation of motion
\begin{equation}
E^{A}:=\mathfrak{N}_{~\alpha}^{Aa}\left(  x,\phi\right)  \nabla_{a}%
\phi^{\alpha}-J^{A}\left(  x,\phi\right)  =0. \label{eq_sys_1}%
\end{equation}
Here the Greek indices $\alpha,\beta,\gamma\,$represent field indices, lower
letters $a,b,c,...$ represent space-time indices and capital letters
$A,B,C,...$ represent multi-tensorial indices on the fiber space of equations
call $\Psi_{L}$. We are considering $\dim\left(  A\right)  =e$ and
$\dim\left(  \alpha\right)  =u$ such that $e\geq u$. The tensor fields
$\mathfrak{N}_{~\alpha}^{Aa}$ and $J^{A}$ are the principal symbol and the
source term respectively, they do not depend on derivatives of $\phi$. \ The
derivative $\nabla_{a}$ represent a partial derivative or a Levi Civitta
connection when the system has \ a metric.

Solving $E^{A}=0$, with a given initial condition for unknown fields
$\phi^{\alpha}$, is called the initial value problem of the system
(\ref{eq_sys_1}). As we commented in the introduction, this can be done
numerically or analytically using the free-evolution approach, i.e.,
separating $E^{A}=0$ into two sets of equations, the evolutions and the
constraints, finding the Subsidiary System (SS), showing the constraint
conservation and using the evolution equations to find the solutions.
Following this path, we present in the first two sections, and from a PDE
perspective, a theory that introduces the sufficient conditions by which a
system such as (\ref{eq_sys_1}) has a SS of first-order partial derivatives.
We begin, this section, introducing the main tools of the article, defining
the evolution, the constraints and the integrability conditions associated to
(\ref{eq_sys_1}).

\subsection{$n+1$ foliation\label{n+1_decomposition_sec}}

We introduce a foliation of $M$ given by the level surfaces of a function
$t:M\rightarrow%
\mathbb{R}
$, and call $\Sigma_{t_{0}}=\left\{  p\in M\mid t\left(  p\right)
=t_{0}\right\}  $ to these hypersurfaces. We also introduce coordinates
$x^{a}=\left(  t,x^{i}\right)  ,$ $i=1,..,n,$ with $x^{i}$ adapted to the
$\Sigma_{t_{0}}$'s. This is a local $n+1$ foliation of $M$ given by
$[0,T]\times\Sigma_{t_{0}}$ with $T\in%
\mathbb{R}
.$ We call \textit{time coordinate} to $t$ and \textit{spatial coordinates} to
$x^{i}$; these names are just names to differentiate one coordinate from the
others since we are not assuming the presence of a background metric here. We
also consider the vector $t^{a}:=\left(  \partial_{t}\right)  ^{a}\dot
{=}\left(  1,0,...,0\right)  $, and co-vector $n_{a}:=\nabla_{a}t\dot
{=}\left(  1,0,...,0\right)  $, where the dot in the equal sign means "in this
coordinates the explicit expressions are". They satisfy the condition
$t^{a}n_{a}=1$, which allows to definite the projector
\begin{equation}
\eta_{b}^{a}:=\delta_{b}^{a}-t^{a}n_{b}\dot{=}\left[
\begin{array}
[c]{cccc}%
0 & 0 & 0 & 0\\
0 & 1 & 0 & 0\\
0 & 0 & ... & 0\\
0 & 0 & 0 & 1
\end{array}
\right]  . \label{Eq_eta_1}%
\end{equation}
It has the properties $\eta_{b}^{a}t^{b}=0=\eta_{b}^{a}n_{a}$ and $\eta
_{b}^{a}\eta_{c}^{b}=\eta_{c}^{a}$. We use them to rewrite (\ref{eq_sys_1}),
as follows%
\begin{align}
E^{A}  &  =\mathfrak{N}_{~\alpha}^{Ac}\left(  n_{c}t^{b}+\eta_{c}^{b}\right)
\nabla_{b}\phi^{\alpha}-J^{A},\label{eq_split_E_1}\\
&  =\mathfrak{N}_{~\alpha}^{Ac}n_{c}\nabla_{t}\phi^{\alpha}+\mathfrak{N}%
_{~\alpha}^{Ac}\eta_{c}^{b}\nabla_{b}\phi^{\alpha}-J^{A}=0
\label{eq_split_E_2}%
\end{align}
In this equation the derivatives are splitting in time and spatial derivatives
since the term $\eta_{c}^{b}\nabla_{b}$ has no derivatives in the $t^{c}$
direction ($t^{c}\eta_{c}^{b}\nabla_{b}=0$). Notice that if we use the
coordinates $\left(  t,x^{i}\right)  $ the last expression can be rewritten
as
\[
E^{A}\dot{=}\mathfrak{N}_{~\alpha}^{A0}\nabla_{t}\phi^{\alpha}+\mathfrak{N}%
_{~\alpha}^{Ai}\nabla_{i}\phi^{\alpha}-J^{A}=0,
\]
where $i=1,...,n$ and
\[
\mathfrak{N}_{~\alpha}^{A0}:=\mathfrak{N}_{~\alpha}^{Ac}n_{c}.
\]
In these expressions the temporal and spatial derivatives are explicit.
Without loss of generality, we will assume along this work that we are in
these coordinates. \ Nevertheless, to simplify the notation we will suppress
the dot in the symbol $\dot{=}$ and the use of $\left(  t,x^{i}\right)  $ will
be understood from the context.

\subsection{Geroch fields and constraints}

We introduce now the main objects of this article, the Geroch fields
$C_{A}^{a}\left(  x,\phi\right)  $, which are used to define the constraints
as we explain below (see \cite{geroch1996partial} too). These Geroch fields
are defined by the equation
\begin{equation}
C_{A}^{(a}\mathfrak{N}_{~\alpha}^{\left\vert A\right\vert b)}=0.
\label{Eq_C_K_nueva_1}%
\end{equation}
\bigskip At each point $\kappa=\left(  x,\phi\right)  \in b$, they define a
vector space\footnote{Notice that given $X_{A}^{a}$ and $Y_{A}^{a}$ satisfying
(\ref{Eq_C_K_nueva_1}), any linear combination $\alpha X_{A}^{a}+\beta
Y_{A}^{a}$ with $\alpha.\beta\in%
\mathbb{R}
$ also satisfies (\ref{Eq_C_K_nueva_1}).}.

We introduce now a condition about these objects that we will explain after
the definition of the constraints. For every open $U$ of $M$, we assume that
it is possible to choose%
\begin{equation}
c:=e-u \label{Eq_c_e_u_1}%
\end{equation}
Geroch fields $C_{A}^{\Gamma a}$, denoted by the index $\Gamma$, and such that
for all $x\in U$ and all $\phi$, the components
\[
C_{A}^{\Gamma0}:=C_{A}^{\Gamma a}n_{a}%
\]
are linearly independent. This field $C_{A}^{\Gamma0}$ can be thought of as a
linear transformation from the index $A$ to the index $\Gamma$, so the above
condition is equivalent to:

\begin{condition}
\label{cod_C0_rank_max_b}For each $\kappa=\left(  x,\phi\right)  \in b$,
\begin{equation}
C_{A}^{\Gamma0}\text{ has maximal rank\footnote{There is no $X_{\Gamma}$ such
that $X_{\Gamma}n_{a}C_{A}^{\Gamma a}=0$ to except of the trivial one
$X_{\Gamma}=0$}.} \label{cod_C0_rank_max}%
\end{equation}

\end{condition}

These Geroch fields $C_{A}^{\Gamma a}$ satisfy the equation
\begin{equation}
C_{A}^{\Gamma(a}\mathfrak{N}_{~\alpha}^{\left\vert A\right\vert b)}=0,
\label{eq_CK_1}%
\end{equation}
and define the \textit{constraints} of the system as follows
\[
\psi^{\Gamma}:=n_{a}C_{A}^{\Gamma a}E^{A}.
\]

These $\psi^{\Gamma}$ are called the constraints equations since they have not
time derivatives, they only have spatial derivatives as follow.
\begin{align}
\psi^{\Gamma}  &  =n_{a}C_{A}^{\Gamma a}E^{A}\nonumber\\
&  =n_{a}C_{A}^{\Gamma a}\mathfrak{N}_{~\alpha}^{Ac}n_{c}\nabla_{t}%
\phi^{\alpha}+n_{a}C_{A}^{\Gamma a}\mathfrak{N}_{~\alpha}^{Ac}\eta_{c}%
^{b}\nabla_{b}\phi^{\alpha}-n_{a}C_{A}^{\Gamma a}J^{A}\nonumber\\
&  =n_{a}C_{A}^{\Gamma a}\mathfrak{N}_{~\alpha}^{Ac}\eta_{c}^{b}\nabla_{b}%
\phi^{\alpha}-n_{a}C_{A}^{\Gamma a}J^{A}\label{eq_def_constr_1}\\
&  =C_{A}^{\Gamma0}\mathfrak{N}_{~\alpha}^{Ai}\nabla_{i}\phi^{\alpha}%
-C_{A}^{\Gamma0}J^{A}=0.\nonumber
\end{align}
Where equation (\ref{eq_split_E_2}) was used in the first line and $n_{a}%
C_{A}^{\Gamma a}\mathfrak{N}_{~\alpha}^{Ac}n_{c}=0$ (follows directly from
(\ref{eq_CK_1})) in the fourth line. This means that they can be calculated
only with the information of $\phi^{\alpha}$ pullbacked to the hypersurfaces
$\Sigma_{t}$.

Notice that these constraints depend on the hypersurfaces considered through
$n_{a}$. In general and when we have a metric, these hypersurfaces are chosen
as spatial, however, there is no restriction on $n_{a}$ to be temporal so by
changing the choice of $n_{a}$ we could obtain constraints associated with any hypersurface.

The motivation of the introduction of condition \ref{cod_C0_rank_max} is that
it implies that the $\psi^{\Gamma}$ are algebraically independents and the
number of them is $c$. This is a natural condition in physical examples as
General Relativity. Recalling that $c$ is the difference between the number of
equations and the fields (see eq. (\ref{Eq_c_e_u_1})), we observe that there
remain $u$ equations in $E^{A}$, these will be the evolution equations.

On the other hand, we notice that assumption (\ref{cod_C0_rank_max}) is a
non-covariant request, however, we will explain how to recover the covariance
in subsection \ref{Sec_Vin_M}. It will be with the introduction of another
kind of Geroch fields.

\begin{remark}
A set of PDE's (as (\ref{eq_sys_1})) that includes constraints might not admit
Geroch fields. In this case, the system should accept another kind of fields
$C_{A}^{a_{1}a_{2}...a_{m}}$ used to define the constraints. Such systems are
not considered here and will be included in future work. However, this does
not represent a significant loss of generality since, as Geroch showed in
\cite{geroch1996partial}, most of the classical physical systems have only
Geroch fields.
\end{remark}

\subsection{Non algebraic constraints}

We also assume, in this work, that there is no $X_{A}$ such that%
\begin{equation}
X_{A}\mathfrak{N}_{~\alpha}^{Aa}=0. \label{eq_X_K_1}%
\end{equation}
It is equivalent to claim that the system (\ref{eq_sys_1}) has no algebraic
constraints since if such $X_{A}$ exists, then the expression
\begin{equation}
X_{A}E^{A}:=-X_{A}J^{A}=0 \label{eq_alg_con_1}%
\end{equation}
would be an algebraic constraint. In the cases in which the system has an
$X_{A}$ satisfying (\ref{eq_X_K_1}), we redefine the system to obtain a new
system without algebraic constraints. This is done by suppressing some
equations from (\ref{eq_sys_1}) and adding the derivatives of
(\ref{eq_alg_con_1}) to the system. We end-up with a new system of first order
in derivatives with more equations, equivalent to the original system, but
without the algebraic constraint. We assume that this new system only admits
Geroch fields.

\subsection{Evolution equations\label{Evo}}

We have introduced the constraints, now we introduce the evolution equations.
We begin with an additional assumption,

\begin{condition}
\label{cod_K0_rank_max_b} For each $\kappa\in b$,
\begin{equation}
\mathfrak{N}_{~\beta}^{A0}\text{ has maximal rank\footnote{This means there is
not $\delta\phi^{\alpha}\neq0$ such that $\mathfrak{N}_{~\beta}^{A0}\delta
\phi^{\alpha}=0.$}.} \label{cod_K0_rank_max}%
\end{equation}

\end{condition}

This condition guarantees that we can define evolution equations for each of
the unknown variables \ $\phi^{\alpha}$ as follow.

First, we introduce the \textit{reduction} tensor $h_{A}^{\alpha}$ such that
\begin{equation}
h_{A}^{\alpha}\mathfrak{N}_{~\beta}^{A0}=\delta_{\beta}^{\alpha},
\label{eq_hK0_1}%
\end{equation}
where $\delta_{\beta}^{\alpha}$ is the identity map. This reduction is always
possible to find when condition (\ref{cod_K0_rank_max}) holds.

Now, with the aid of the reduction $h_{A}^{\alpha}$, we choose the
\textit{evolution equations }%
\begin{equation}
e^{\alpha}:=h_{A}^{\alpha}E^{A}=t^{a}\nabla_{a}\phi^{\alpha}+h_{A}^{\alpha
}\mathfrak{N}_{~\beta}^{Ac}\eta_{c}^{b}\nabla_{b}\phi^{\beta}-h_{A}^{\alpha
}J^{A}=0, \label{eq_evol_1}%
\end{equation}
where expressions (\ref{eq_split_E_2}) and (\ref{eq_hK0_1}) were used. These
equations are called evolution equations since they include derivatives in the
$t^{a}$ direction for each $\phi^{\alpha}$, telling how the fields\ $\phi
^{\alpha}$ evolve outside of the hypersurfaces $\Sigma_{t}$ In general, when
the system has a background metric, $t^{a}$ is chosen temporal.

These evolution equations are a linear combination of the eq. (\ref{eq_sys_1})
and, of course, they are not unique. We can define other reductions $\tilde
{h}_{A}^{\alpha}$ as follow%
\[
\tilde{h}_{A}^{\alpha}=h_{A}^{\alpha}+p_{A}^{\alpha},
\]
where $p_{A}^{\alpha}$ satisfies
\[
p_{A}^{\alpha}\mathfrak{N}_{~\beta}^{A0}=0,
\]
and produce another set of evolutions equations
\begin{equation}
\tilde{e}^{\alpha}:=\tilde{h}_{A}^{\alpha}E^{A}=t^{a}\nabla_{a}\phi^{\alpha
}+\left(  h_{A}^{\alpha}\mathfrak{N}_{~\beta}^{Ac}+p_{A}^{\alpha}%
\mathfrak{N}_{~\beta}^{Ac}\right)  \eta_{c}^{b}\nabla_{b}\phi^{\beta}-\left(
h_{A}^{\alpha}+p_{A}^{\alpha}\right)  J^{A}=0. \label{Eq_evo_2}%
\end{equation}
In general, this freedom is used to obtain well-posed evolution equations (see
\cite{Abalos:2018uwg}).

\subsection{Evolution and constraints equations}

By requiring that system (\ref{eq_sys_1}) satisfies conditions
(\ref{cod_C0_rank_max}) and (\ref{cod_K0_rank_max}), we have decomposed
(\ref{eq_sys_1}) into two subsets of equations, the evolution \ ($e^{\alpha}$)
and constraint equations ($\psi^{\Delta}$) as follow
\begin{equation}
\left[
\begin{array}
[c]{c}%
e^{\alpha}\\
\psi^{\Delta}%
\end{array}
\right]  :=\left[
\begin{array}
[c]{c}%
h_{A}^{\alpha}\\
C_{A}^{\Delta0}%
\end{array}
\right]  E^{A}. \label{Eq_e_y_vin_Mat_E_1}%
\end{equation}
Now, we will show that for each $\kappa\in b$, the matrix $\left[
\begin{array}
[c]{c}%
h_{B}^{\alpha}\\
C_{B}^{\Delta0}%
\end{array}
\right]  $ is invertible. This means that $e^{\alpha}$ and $\psi^{\Delta}$ are
linearly equivalent to the whole system.

\begin{lemma}
For each $\kappa\in b$, the matrix $\left[
\begin{array}
[c]{c}%
h_{B}^{\alpha}\\
C_{B}^{\Delta0}%
\end{array}
\right]  $ is invertible, with inverse $\left[
\begin{array}
[c]{cc}%
\mathfrak{N}_{~\beta}^{B0} & h_{~\Gamma}^{B}%
\end{array}
\right]  $, where%
\[%
\begin{array}
[c]{ccc}%
C_{B}^{\Delta0}h_{~\Gamma}^{B}=\delta_{\Gamma}^{\Delta}, &  & h_{B}^{\alpha
}h_{~\Gamma}^{B}=0
\end{array}
\]
and%
\begin{equation}
\left[
\begin{array}
[c]{cc}%
\mathfrak{N}_{~\alpha}^{A0} & h_{~\Delta}^{A}%
\end{array}
\right]  \left[
\begin{array}
[c]{c}%
h_{B}^{\alpha}\\
C_{B}^{\Delta0}%
\end{array}
\right]  =\mathfrak{N}_{~\alpha}^{A0}h_{B}^{\alpha}+h_{~\Delta}^{A}%
C_{B}^{\Delta0}=\delta_{B}^{A} \label{eq_Kh_hC_1}%
\end{equation}

\end{lemma}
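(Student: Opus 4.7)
The plan is to work fibre-wise at a fixed $\kappa = (x,\phi) \in b$ and regard $\mathfrak{N}_{~\alpha}^{A0}$, $C_A^{\Delta 0}$ and $h_A^\alpha$ as linear maps between finite-dimensional spaces of dimensions $u$, $c$ and $e = u + c$. First I would extract the key compatibility $C_A^{\Delta 0}\mathfrak{N}_{~\alpha}^{A0} = 0$, then use it together with a dimension count to identify the image of $\mathfrak{N}^{A0}$ with $\ker(C^{\Delta 0})$, and finally construct $h_{~\Gamma}^B$ explicitly as a right inverse of $C^{\Delta 0}$ lying inside $\ker(h^\alpha)$. Given the two listed block identities, the one in (\ref{eq_Kh_hC_1}), and the defining relation (\ref{eq_hK0_1}), invertibility of the square $e\times e$ matrix will be automatic.

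Extracting the dimensions: by Condition \ref{cod_K0_rank_max_b}, $\mathfrak{N}_{~\alpha}^{A0}$ is injective so its image $V_{1}$ has dimension $u$; by Condition \ref{cod_C0_rank_max_b}, $C_A^{\Delta 0}$ is surjective so $\ker(C^{\Delta 0})$ has dimension $e - c = u$. Contracting equation (\ref{eq_CK_1}) twice with $n_a$ yields $C_A^{\Delta 0}\mathfrak{N}_{~\alpha}^{A0} = 0$, hence $V_{1}\subseteq \ker(C^{\Delta 0})$; the dimensions match, so equality holds.

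Next I would use the reduction to split the equation space. Since $h_A^\alpha\mathfrak{N}_{~\beta}^{A0} = \delta_\beta^\alpha$, the endomorphism $P_B^A := \mathfrak{N}_{~\alpha}^{A0} h_B^\alpha$ is idempotent with image $V_{1}$ and kernel $W := \ker(h^\alpha)$, producing a direct-sum decomposition into subspaces of dimensions $u$ and $c$. Because $V_{1} = \ker(C^{\Delta 0})$ meets $W$ only at the origin, the restriction $C^{\Delta 0}|_{W}$ is an isomorphism between $c$-dimensional spaces. I would then define $h_{~\Gamma}^B$ as its inverse composed with the inclusion of $W$ into the equation space. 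By construction this gives $C_B^{\Delta 0} h_{~\Gamma}^B = \delta_\Gamma^\Delta$ and $h_B^\alpha h_{~\Gamma}^B = 0$.

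For (\ref{eq_Kh_hC_1}): applying $\mathfrak{N}_{~\alpha}^{A0} h_B^\alpha + h_{~\Delta}^A C_B^{\Delta 0}$ to an arbitrary $X^B$ and splitting $X = PX + (I - P)X$, the first term reproduces the $V_{1}$-component $\mathfrak{N}_{~\alpha}^{A0} h_B^\alpha X^B = PX$, while $(I-P)X\in W$ is returned unchanged by $h_{~\Delta}^A C^{\Delta 0}_B$ restricted to $W$, so the total equals $X$. Combined with (\ref{eq_hK0_1}) and $C_A^{\Delta 0}\mathfrak{N}_{~\alpha}^{A0} = 0$, all four block relations hold and the two rectangular arrays are mutual inverses. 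The only non-routine step is the dimension argument that upgrades the inclusion $V_{1} \subseteq \ker(C^{\Delta 0})$ to an equality; everything downstream is short linear algebra.
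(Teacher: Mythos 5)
Your proof is correct, but it takes a genuinely different route from the paper's. The paper proves invertibility of the stacked $e\times e$ matrix by contradiction: it assumes a non-trivial linear relation $X_{\alpha}h_{B}^{\alpha}+Y_{\Delta}C_{B}^{\Delta 0}=0$, contracts with $\mathfrak{N}_{~\beta}^{B0}$ to kill $X_{\alpha}$ via equation (\ref{eq_hK0_1}) and $C_{A}^{\Gamma 0}\mathfrak{N}_{~\alpha}^{A0}=0$, and then uses Condition \ref{cod_C0_rank_max_b} to kill $Y_{\Delta}$; invertibility of a square matrix then follows, and the four block identities are simply read off from the existence of the inverse, without exhibiting it concretely. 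You instead construct the inverse explicitly: the dimension count upgrades the inclusion $\operatorname{im}(\mathfrak{N}^{A0})\subseteq\ker(C^{\Delta 0})$ to equality, the projector $P_{B}^{A}=\mathfrak{N}_{~\alpha}^{A0}h_{B}^{\alpha}$ decomposes the equation fibre as $\operatorname{im}(\mathfrak{N}^{A0})\oplus\ker(h^{\alpha})$, and $h_{~\Gamma}^{B}$ is defined as the inverse of the isomorphism $C^{\Delta 0}|_{\ker h}$. This buys a geometric picture (the evolution/constraint splitting of the equation fibre as a genuine direct sum, with $h_{~\Gamma}^{B}$ identified as the canonical section of $C^{\Delta 0}$ complementary to $\operatorname{im}\,\mathfrak{N}^{0}$) at the cost of being somewhat longer; the paper's indirect argument is shorter but leaves the inverse abstract. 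Both require Conditions \ref{cod_K0_rank_max_b} and \ref{cod_C0_rank_max_b} at exactly the same points.
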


\begin{proof}
To show that $\left[
\begin{array}
[c]{c}%
h_{B}^{\alpha}\\
C_{B}^{\Delta0}%
\end{array}
\right]  $ is invertible, we will assume that it is not, and conclude an
absurd. So, if $\left[
\begin{array}
[c]{c}%
h_{B}^{\alpha}\\
C_{B}^{\Delta0}%
\end{array}
\right]  $ is not invertible, there exists $\left[
\begin{array}
[c]{cc}%
X_{\alpha} & Y_{\Delta}%
\end{array}
\right]  \neq0$ such that
\begin{equation}
X_{\alpha}h_{B}^{\alpha}+Y_{\Delta}C_{B}^{\Delta0}=0. \label{eq_ld_h_C0}%
\end{equation}

\sloppy
Multiplying this expression by $\mathfrak{N}_{~\beta}^{B0}$ and using the
equations (\ref{eq_hK0_1}) and  $C_{A}^{\Gamma0}\mathfrak{N}_{~\alpha}^{A0}=0$
we conclude
\[
X_{\alpha}=0.
\]
This means that $Y_{\Delta}C_{B}^{\Delta0}=0$, but since $C_{B}^{\Delta0}$ has
only trivial kernel in the $\Delta$ index, it follows $Y_{\Delta}=0$. So, the
unique solution of eq. (\ref{eq_ld_h_C0}) is $\left[
\begin{array}
[c]{cc}%
X_{\alpha} & Y_{\Delta}%
\end{array}
\right]  =0$, which is a contradiction. We conclude that $\left[
\begin{array}
[c]{c}%
h_{B}^{\alpha}\\
C_{B}^{\Delta0}%
\end{array}
\right]  $ is invertible.

If $\left[
\begin{array}
[c]{cc}%
\mathfrak{N}_{~\beta}^{B0} & h_{~\Gamma}^{B}%
\end{array}
\right]  $ is the inverse, the product
\[
\left[
\begin{array}
[c]{c}%
h_{B}^{\alpha}\\
C_{B}^{\Delta0}%
\end{array}
\right]  \left[
\begin{array}
[c]{cc}%
\mathfrak{N}_{~\beta}^{B0} & h_{~\Gamma}^{B}%
\end{array}
\right]  =\left[
\begin{array}
[c]{cc}%
h_{B}^{\alpha}\mathfrak{N}_{~\beta}^{B0} & h_{B}^{\alpha}h_{~\Gamma}^{B}\\
C_{B}^{\Delta0}\mathfrak{N}_{~\beta}^{B0} & C_{B}^{\Delta0}h_{~\Gamma}^{B}%
\end{array}
\right]  ,
\]
should be equal to the identity matrix, then $C_{B}^{\Delta0}h_{~\Gamma}%
^{B}=\delta_{\Gamma}^{\Delta}$, $h_{B}^{\alpha}h_{~\Gamma}^{B}=0.$

On the other hand, commuting the matrices we obtain the following equation%
\begin{align*}
\delta_{B}^{A}  &  =\left[
\begin{array}
[c]{cc}%
\mathfrak{N}_{~\alpha}^{A0} & h_{~\Delta}^{A}%
\end{array}
\right]  \left[
\begin{array}
[c]{c}%
h_{B}^{\alpha}\\
C_{B}^{\Delta0}%
\end{array}
\right]  ,\\
&  =\mathfrak{N}_{~\alpha}^{A0}h_{B}^{\alpha}+h_{~\Delta}^{A}C_{B}^{\Delta0}.
\end{align*}

\end{proof}

\subsection{Non unicity in the Geroch fields\label{Sec_Vin_M}}

Frequently, the constraints have certain differential relationships between
them on the hypersurfaces $\Sigma_{t}$. The canonical example is when a
second-order derivative system is reduced to first-order by defining the first
derivatives as new variables. This approach reduces the order of the
derivatives and at the same time introduces extra constraints to the system.
These extra constraints also satisfy certain differential relationships
between them, their cross-derivatives should vanish. The way we parameterize
this, and any other, differential relationship between the constraints is
through another class of Geroch fields that we call $M_{A}^{\tilde{\Delta}%
a}\left(  x,\phi\right)  $, where the $\tilde{\Delta}$ index numerates them.

For each $\kappa=\left(  x,\phi\right)  \in b$, these new extra fields satisfy
the equation (\ref{eq_CK_1}), which means
\begin{equation}
M_{A}^{\tilde{\Delta}(a}\mathfrak{N}_{~\alpha}^{\left\vert A\right\vert b)}=0,
\label{eq_M_K_1}%
\end{equation}
and also satisfy the following condition
\begin{equation}
M_{A}^{\tilde{\Delta}a}\mathfrak{N}_{~\alpha}^{A0}=0. \label{eq_M_K0_1}%
\end{equation}
Notice that, for each $\kappa,$ the equations (\ref{eq_M_K_1}) and
(\ref{eq_M_K0_1}) define a vector subspace within the Geroch field space.

To simplify the analysis, we add an assumption on the $M_{A}^{\tilde{\Delta}%
a}$ which also defines the $\tilde{\Delta}$ index.

\begin{condition}
\label{As_M_1_b}In every open $U\subset M$ and for all $\kappa=\left(
x,\phi\right)  \in b$ with $x\in U$, there exist a finite number of fields
$M_{A}^{\tilde{\Delta}a}$ indexed by $\tilde{\Delta}$ such that any field
satisfying (\ref{eq_M_K_1}) and (\ref{eq_M_K0_1}) can be expanded as a linear
combination of these $M_{A}^{\tilde{\Delta}a}$. This means that any field
satisfying (\ref{eq_M_K_1}) and (\ref{eq_M_K0_1}) is a linear combination of
the $M_{A}^{\tilde{\Delta}a}$.
\end{condition}

This assumption is usually satisfied in physical systems in such a way that
the pair $\left(  \Gamma,\tilde{\Delta}\right)  $ define a tensor index. Thus,
the covariance of these Geroch tensors is recovered by considering the pair
$C_{A}^{\Gamma a}$ and $M_{A}^{\tilde{\Delta}a}$ together. We will comment
more about this at the end of this subsection.

Another result that arises from this assumption is:

\begin{lemma}
If the conditions \ref{cod_C0_rank_max}, \ref{cod_K0_rank_max} and
\ref{As_M_1_b} hold, then in every open $U\subset M$ and for all
$\kappa=\left(  x,\phi\right)  \in b$ with $x\in U$, every Geroch field
$X_{A}^{a}$ (satisfying the eq. (\ref{Eq_C_K_nueva_1})) is a linear
combination of $C_{A}^{\Gamma a}$ and $M_{A}^{\tilde{\Delta}a}$, i.e. there
exist always $S_{\Delta}$ and $S_{\tilde{\Delta}}$ such that
\[
X_{A}^{a}=S_{\Delta}C_{A}^{\Delta a}+S_{\tilde{\Delta}}M_{A}^{\tilde{\Delta}%
a}.
\]

\end{lemma}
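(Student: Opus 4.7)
The plan is to subtract a pointwise linear combination of the $C_A^{\Gamma a}$ from $X_A^a$ so that the remainder meets the two defining conditions of the $M$-fields, and then invoke Condition~\ref{As_M_1_b} to expand that remainder. Writing $X_B^0:=X_B^a n_a$, I will set
\[
S_\Delta:=X_B^0\,h_{~\Delta}^B,\qquad \tilde X_A^a:=X_A^a-S_\Delta\,C_A^{\Delta a},
\]
and show that $\tilde X_A^a$ obeys both (\ref{eq_M_K_1}) and (\ref{eq_M_K0_1}); the assumption will then supply $S_{\tilde\Delta}$ with $\tilde X_A^a=S_{\tilde\Delta} M_A^{\tilde\Delta a}$, yielding the claimed decomposition.

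The first step is to check that the $n_a$-component of $X_A^a$ is absorbed by $S_\Delta C_A^{\Delta a}$. The symmetrized Geroch identity $X_A^{(a}\mathfrak{N}_{~\alpha}^{|A|b)}=0$ evaluated at $a=b=0$ gives $X_A^0\mathfrak{N}_{~\alpha}^{A0}=0$. Pairing this with the relabeled block-inverse identity from (\ref{eq_Kh_hC_1}), $\delta_A^B=\mathfrak{N}_{~\alpha}^{B0}h_A^\alpha+h_{~\Delta}^B C_A^{\Delta 0}$, one obtains
\[
X_A^0=X_B^0\,\delta_A^B=(X_B^0\mathfrak{N}_{~\alpha}^{B0})h_A^\alpha+(X_B^0 h_{~\Delta}^B)C_A^{\Delta 0}=S_\Delta C_A^{\Delta 0},
\]
so by construction $\tilde X_A^0=0$.

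The second step verifies the two $M$-conditions for $\tilde X_A^a$. Because $S_\Delta$ carries no spacetime index, the symmetrized identity (\ref{eq_M_K_1}) for $\tilde X_A^a$ is inherited directly from those of $X_A^a$ and $C_A^{\Delta a}$. Contracting this identity with $n_b$ and using $\tilde X_A^0=0$ yields
\[
\tilde X_A^a\,\mathfrak{N}_{~\alpha}^{A0}=-\tilde X_A^0\,\mathfrak{N}_{~\alpha}^{Aa}=0,
\]
which is exactly (\ref{eq_M_K0_1}). Condition~\ref{As_M_1_b} then provides scalars $S_{\tilde\Delta}$ with $\tilde X_A^a=S_{\tilde\Delta}M_A^{\tilde\Delta a}$, and adding back $S_\Delta C_A^{\Delta a}$ gives $X_A^a=S_\Delta C_A^{\Delta a}+S_{\tilde\Delta}M_A^{\tilde\Delta a}$.

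The argument is purely linear-algebraic, and I do not anticipate a serious obstacle once the invertibility lemma for $[h_B^\alpha;\,C_B^{\Delta 0}]$ is in hand. The one spot requiring a touch of care is the choice of $S_\Delta$: it must be picked so that the full $n_a$-component of $X_A^a$ is removed, and this is precisely what the combination of the symmetrization (which forces $X_A^0\mathfrak{N}_{~\alpha}^{A0}=0$) and the block-inverse formula (\ref{eq_Kh_hC_1}) delivers. Everything else is automatic.
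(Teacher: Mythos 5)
Your proof is correct and takes essentially the same route as the paper: you choose $S_\Delta = X_B^0 h_{~\Delta}^B$, show the remainder satisfies (\ref{eq_M_K_1}) and (\ref{eq_M_K0_1}), and invoke Condition~\ref{As_M_1_b}. The only cosmetic difference is that you first isolate $\tilde X_A^0=0$ and then read off (\ref{eq_M_K0_1}) from the symmetrized Geroch identity, whereas the paper folds these into a single chain; you also make explicit the fact $X_A^0\mathfrak{N}_{~\alpha}^{A0}=0$, which the paper uses silently.
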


\begin{proof}
Let $X_{A}^{a}$ be any Geroch field, we have to show that there exist
$S_{\Delta}$ and $S_{\tilde{\Delta}}$ such that $X_{A}^{a}=S_{\Delta}%
C_{A}^{\Delta a}+S_{\tilde{\Delta}}M_{A}^{\tilde{\Delta}a}.$ We propose
$S_{\Delta}=X_{B}^{0}h_{~\Delta}^{B}$ and point out that to conclude the
proof, we only need to show that
\begin{equation}
X_{A}^{a}-\left(  X_{B}^{0}h_{~\Delta}^{B}\right)  C_{A}^{\Delta a}
\label{eq_proy_a_M_1}%
\end{equation}
satisfies (\ref{eq_M_K_1}) and (\ref{eq_M_K0_1}). Since, if this is true, by
the condition \ref{As_M_1_b} the field (\ref{eq_proy_a_M_1}) should be
expanded by $M_{A}^{\tilde{\Delta}a}$. Showing (\ref{eq_M_K_1}) is trivial
from the definitions of Geroch fields, so it only remains to demonstrate
(\ref{eq_M_K0_1}). Multiplying (\ref{eq_proy_a_M_1}) by $\mathfrak{N}%
_{~\alpha}^{A0},$ we obtain
\begin{align*}
\left(  X_{A}^{a}-\left(  X_{B}^{0}h_{~\Delta}^{B}\right)  C_{A}^{\Delta
a}\right)  \mathfrak{N}_{~\alpha}^{A0}  &  =-\left(  X_{A}^{0}-\left(
X_{B}^{0}h_{~\Delta}^{B}\right)  C_{A}^{\Delta0}\right)  \mathfrak{N}%
_{~\alpha}^{Aa},\\
&  =-X_{B}^{0}\left(  \delta_{A}^{B}-h_{~\Delta}^{B}C_{A}^{\Delta0}\right)
\mathfrak{N}_{~\alpha}^{Aa},\\
&  =-X_{B}^{0}\mathfrak{N}_{~\alpha}^{B0}h_{A}^{\alpha}\mathfrak{N}_{~\alpha
}^{Aa},\\
&  =0.
\end{align*}
We used $X_{A}^{a}\mathfrak{N}_{~\alpha}^{A0}=-X_{A}^{0}\mathfrak{N}_{~\alpha
}^{Aa}$ and $C_{A}^{\Delta a}\mathfrak{N}_{~\alpha}^{A0}=-C_{A}^{\Delta
0}\mathfrak{N}_{~\alpha}^{Aa}$ in the first equality, (\ref{eq_Kh_hC_1}) (only
valid assuming \ref{cod_C0_rank_max}, \ref{cod_K0_rank_max}) in the second
one, (\ref{eq_Kh_hC_1}) in the third one and $X_{B}^{0}\mathfrak{N}_{~\alpha
}^{B0}=0$ in the last one. This concludes the proof.
\end{proof}

We said that the $M_{A}^{\tilde{\Delta}a}$ parameterize the differential
relationships between the constraints, their explicit expressions can be found
in subsection (\ref{teorema_ec_ev_vin_1_a}), eq. (\ref{Eq_vinc_de_los_vinc_1}%
). It is important to mention that to reach those expressions we need the
integrability condition (\ref{eq_int_LE_2}) which appears in the next
subsection. From now on we will refer to (\ref{Eq_vinc_de_los_vinc_1})\ as the
\textit{constraints of the constraints.}

Using the identity (\ref{eq_Kh_hC_1}), $M_{A}^{\tilde{\Delta}a}$ can be
rewritten as%
\begin{align}
M_{B}^{\tilde{\Delta}a}  &  =M_{A}^{\tilde{\Delta}a}\left[
\begin{array}
[c]{cc}%
\mathfrak{N}_{~\alpha}^{A0} & h_{~\Gamma}^{A}%
\end{array}
\right]  \left[
\begin{array}
[c]{c}%
h_{B}^{\alpha}\\
C_{B}^{\Gamma0}%
\end{array}
\right] \nonumber\\
&  =M_{A}^{\tilde{\Delta}a}h_{~\Gamma}^{A}C_{B}^{\Gamma0}, \label{Eq_M_h_C0_1}%
\end{align}
where the equation (\ref{eq_M_K0_1}) has been used. So, by redefining
\begin{equation}
M_{\Gamma}^{\tilde{\Delta}a}:=M_{A}^{\tilde{\Delta}a}h_{~\Gamma}^{A}
\label{Eq_M_n_1}%
\end{equation}
and using the eq. (\ref{eq_CK_1}) we derive an equation for $M_{\Gamma
}^{\tilde{\Delta}a}$%

\begin{equation}
M_{\Gamma}^{\tilde{\Delta}(a}C_{A}^{\left\vert \Gamma0\right\vert
}\mathfrak{N}_{~\alpha}^{\left\vert A\right\vert b)}=0. \label{eq_M_C0_K_1}%
\end{equation}
There is an isomorphism between $M_{\Gamma}^{\tilde{\Delta}a}$ and
$M_{B}^{\tilde{\Delta}a}$, in the sense that one of them completely define the
other. For each $M_{\Gamma}^{\tilde{\Delta}a}$ satisfying the eq.
(\ref{eq_M_C0_K_1}), $M_{B}^{\tilde{\Delta}a}$ can be defined by the equation
(\ref{Eq_M_h_C0_1}), and for each $M_{B}^{\tilde{\Delta}a}$ satisfying
(\ref{eq_M_K_1}) and (\ref{eq_M_K0_1}),\ \ $M_{\Gamma}^{\tilde{\Delta}a}$
\ can be defined by $M_{A}^{\tilde{\Delta}a}h_{~\Gamma}^{A}$. In addition,
considering the following expression,
\[
M_{A}^{\tilde{\Delta}a}\left[
\begin{array}
[c]{cc}%
\mathfrak{N}_{~\alpha}^{A0} & h_{~\Gamma}^{A}%
\end{array}
\right]  =\left[
\begin{array}
[c]{cc}%
0 & M_{\Gamma}^{\tilde{\Delta}a}%
\end{array}
\right]  ,
\]
where we have used that $M_{A}^{\tilde{\Delta}a}\mathfrak{N}_{~\alpha}^{A0}%
=0$; and recalling that $\left[
\begin{array}
[c]{cc}%
\mathfrak{N}_{~\alpha}^{A0} & h_{~\Gamma}^{A}%
\end{array}
\right]  $ is invertible, the last expression means that any set of
$M_{\Gamma}^{\tilde{\Delta}a}$'s is linearly independent if and only if the
associated set of $M_{B}^{\tilde{\Delta}a}$'s is linearly independent too.
From now on, we will regard $M_{B}^{\tilde{\Delta}a}$ and $M_{\Gamma}%
^{\tilde{\Delta}a}$ as equivalent to simplify the discussion.

The $M_{\Gamma}^{\tilde{\Delta}0}$ has another interesting property. By
employing eq. (\ref{eq_M_C0_K_1}), we reach to the following lemma.

\begin{lemma}
\label{Lema_M0} $M_{\Gamma}^{\tilde{\Delta}0}=0$ (or $M_{A}^{\tilde{\Delta}%
0}=0$).
\end{lemma}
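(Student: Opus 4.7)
The plan is to combine three ingredients that are already on the table: the defining symmetrization (\ref{eq_M_C0_K_1}) for $M_{\Gamma}^{\tilde{\Delta}a}$, the non-algebraic-constraint assumption (\ref{eq_X_K_1}), and the maximal-rank Condition \ref{cod_C0_rank_max} on $C_{A}^{\Gamma0}$. The whole argument is essentially extracting the $a=0$ component of (\ref{eq_M_C0_K_1}) and then peeling off the two injective maps $\mathfrak{N}_{~\alpha}^{Aa}$ and $C_{A}^{\Gamma 0}$ in turn.

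First, I would expand the symmetrization in (\ref{eq_M_C0_K_1}) and specialize $a=0$, obtaining
\[
M_{\Gamma}^{\tilde{\Delta}0}C_{A}^{\Gamma0}\mathfrak{N}_{~\alpha}^{Ab} + M_{\Gamma}^{\tilde{\Delta}b}C_{A}^{\Gamma0}\mathfrak{N}_{~\alpha}^{A0}=0.
\]
The second summand vanishes identically: evaluating (\ref{eq_CK_1}) at $a=b=0$ (where the symmetrization is trivial) gives $C_{A}^{\Gamma0}\mathfrak{N}_{~\alpha}^{A0}=0$. What remains is
\[
M_{\Gamma}^{\tilde{\Delta}0}C_{A}^{\Gamma0}\mathfrak{N}_{~\alpha}^{Ab}=0
\]
for every spacetime index $b$ and every field index $\alpha$.

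Second, I would introduce the auxiliary object $W_{A}^{\tilde{\Delta}}:=M_{\Gamma}^{\tilde{\Delta}0}C_{A}^{\Gamma0}$. Combining the previous display with the trivially true $b=0$ case (for which the right-hand side was already shown to vanish), we get $W_{A}^{\tilde{\Delta}}\mathfrak{N}_{~\alpha}^{Aa}=0$ for every $a$ and $\alpha$. The non-algebraic-constraint hypothesis (\ref{eq_X_K_1}) then forces $W_{A}^{\tilde{\Delta}}=0$, i.e.\ $M_{\Gamma}^{\tilde{\Delta}0}C_{A}^{\Gamma0}=0$.

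Finally, Condition \ref{cod_C0_rank_max} asserts that the only $X_{\Gamma}$ with $X_{\Gamma}C_{A}^{\Gamma0}=0$ is $X_{\Gamma}=0$; applied to $M_{\Gamma}^{\tilde{\Delta}0}$ for each fixed $\tilde{\Delta}$, this yields $M_{\Gamma}^{\tilde{\Delta}0}=0$. The companion statement $M_{A}^{\tilde{\Delta}0}=0$ then drops out of (\ref{Eq_M_h_C0_1}) together with the definition (\ref{Eq_M_n_1}), which gives $M_{B}^{\tilde{\Delta}a}=M_{\Gamma}^{\tilde{\Delta}a}C_{B}^{\Gamma0}$, specialized to $a=0$. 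I do not anticipate any real obstacle: the argument is bookkeeping in the definitions, with the two rank conditions (\ref{eq_X_K_1}) and \ref{cod_C0_rank_max} used in succession to strip off $\mathfrak{N}$ and $C^{\Gamma0}$.
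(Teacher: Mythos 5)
Your argument is correct and follows the paper's own proof essentially step for step: take the $(0,b)$ component of (\ref{eq_M_C0_K_1}), discard the second term using $C_{A}^{\Gamma0}\mathfrak{N}_{~\alpha}^{A0}=0$ from (\ref{eq_CK_1}), invoke the no-algebraic-constraints hypothesis (\ref{eq_X_K_1}) to conclude $M_{\Gamma}^{\tilde{\Delta}0}C_{A}^{\Gamma0}=0$, and then use the maximal-rank Condition~\ref{cod_C0_rank_max_b} to peel off $C_{A}^{\Gamma0}$. The only difference is cosmetic: you spell out the $b=0$ case and explicitly derive $M_{A}^{\tilde{\Delta}0}=0$ from $M_{\Gamma}^{\tilde{\Delta}0}=0$ via (\ref{Eq_M_h_C0_1}), steps the paper leaves implicit.
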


\begin{proof}
Considering the $0,b$ components of eq. (\ref{eq_M_C0_K_1}) and the equation
$C_{A}^{\Gamma0}\mathfrak{N}_{~\alpha}^{A0}=0$, we conclude
\[
M_{\Gamma}^{\tilde{\Delta}0}C_{A}^{\Gamma0}\mathfrak{N}_{~\alpha}%
^{Ab}=-M_{\Gamma}^{\tilde{\Delta}b}C_{A}^{\Gamma0}\mathfrak{N}_{~\alpha}%
^{A0}=0.
\]
Since we are working with systems that have not algebraic constraints, there
is not $X_{A}$ such that eq. (\ref{eq_X_K_1}) holds, so it should be valid
\[
M_{\Gamma}^{\tilde{\Delta}0}C_{A}^{\Gamma0}=0.
\]
On the other hand, the $C_{A}^{\Gamma0}$ has only trivial kernel in the
$\Gamma$ index, thus we finally conclude $M_{\Gamma}^{\tilde{\Delta}0}=0$.
\end{proof}

Notice that this result allows us to redefine the Geroch's fields
$C_{A}^{\Gamma a}$ in the following way%
\[
\tilde{C}_{A}^{\Gamma a}=C_{A}^{\Gamma a}+N_{\tilde{\Delta}}^{\Gamma}%
M_{A}^{\tilde{\Delta}a},
\]
leaving the $C_{A}^{\Gamma0}$ component unchanged and keeping the same
expressions for the constraints eq. (\ref{eq_def_constr_1}). Here, the
$N_{\tilde{\Delta}}^{\Gamma}$ fields can be freely chosen and the reader can
see that the Geroch fields have significant freedom in their definition. This
freedom will also appear in the evolution equations of the constraints.

Finally, we conclude the discussion of the covariance in the Geroch fields. So
far, it has been requested that $C_{A}^{\Delta0}$ has maximal rank and has
shown that $M_{A}^{\tilde{\Gamma}0}=0$, but these are not covariant conditions
since the $n_{a}$ direction is privileged among others in these two objects.
In general, one can recover the covariance, thinking that we have split the
real tensors (the ones which transform good under change of coordinates) into
two parts, the $C_{A}^{\Delta a}$ and $M_{B}^{\tilde{\Delta}a}$. We will not
pay attention to this non-covariant splitting in the future and we will call
fields or tensors to $C_{A}^{\Delta a}$ and $M_{B}^{\tilde{\Delta}a}$, even if
they do not transform as real ones.

We have said nothing about how many derivatives admit the objects considering
until here, so, we will assume that \ $\mathfrak{N}_{~\alpha}^{Ac}$, $J^{A}$,
$C_{A}^{\Delta a}$, $M_{B}^{\tilde{\Delta}a}$, $h_{B}^{\alpha}$ and
$h_{~\Gamma}^{A}$ admit at least one derivative in any direction for the next
conditions hold.

\subsection{Integrability conditions}

At this point, the Geroch fields $C_{A}^{\Gamma a}$ have been defined but only
the $C_{A}^{\Gamma0}$ component has been used to define the constraints. We
introduce now a set of \textit{off-shell identities} which include all the
components of $C_{A}^{\Gamma a}$ and $M_{\Gamma}^{\tilde{\Delta}d}$, they are%

\begin{equation}
\nabla_{d}\left(  C_{A}^{\Gamma d}E^{A}\right)  =L_{1A}^{\Gamma}\left(
x,\phi,\nabla\phi\right)  E^{A}\left(  x,\phi,\nabla\phi\right)  ,
\label{eq_int_LE_1}%
\end{equation}%
\begin{equation}
\nabla_{d}\left(  M_{\Gamma}^{\tilde{\Delta}d}C_{A}^{\Gamma0}E^{A}\right)
=L_{2A}^{\tilde{\Delta}}\left(  x,\phi,\nabla\phi\right)  E^{A}\left(
x,\phi,\nabla\phi\right)  . \label{eq_int_LE_2}%
\end{equation}
We call \textit{integrability conditions }to these identities and we assume
they hold for the rest of this work. The motivation behind their inclusion
will be explained in the following paragraphs.

We begin by noticing, the integrability conditions specify that the
divergences of certain combinations of the equations of the system are
proportional to the system. These proportionality factors may depend on
$\left(  x,\phi,\nabla\phi\right)  $. \ In the case of the Einstein equations,
the eq. (\ref{eq_int_LE_1}) follows from the conservation law of the Einstein
tensor $\nabla_{a}G_{~b}^{a}=0$ (or from the 2nd Bianchi identity). This is
obtained by the Noether theorem as a conserved quantity from the Lagrangian
coordinate transformation invariance (see \cite{Wald:1984rg}). The eq.
(\ref{eq_int_LE_2}) appears when the Einstein equations are cast into
first-order form. In the case of the Maxwell equations (or Yang-Mills
equations), eq. (\ref{eq_int_LE_1}) follows from the charge conservation law,
this is obtained by the Noether theorem and associated to the Lagrangian gauge
invariance symmetry (see \cite{Brading:2002lzh}).This system does not have an
identity as (\ref{eq_int_LE_2}). In the case of the wave equations, the
identities (\ref{eq_int_LE_1}) and (\ref{eq_int_LE_2}) appear when the system
is reduced into first order in derivative adding extra constraints. The study
of the constraints propagation of the Maxwell and wave equations are discussed
in section \ref{Examples}, we there give explicit expressions for these identities.

It is likely that at least the identity (\ref{eq_int_LE_1}) always comes from
the Noether theorem (in there off-shell version) as a conserved quantity and
associated to some "gauge" symmetry of the system. As we will show in the next
section, these identities are needed to show the constraints conservation
since they are the evolutions equations of the constraints. Therefore, in
general, we beleive that the constraints conservation will be associated to
some symmetry of the system.

From the PDE point of view, it has sense that the Geroch fields appear in the
expressions (\ref{eq_int_LE_1}) and (\ref{eq_int_LE_2}). Considering only the
left-hand side of (\ref{eq_int_LE_1}) and replacing $E^{A}$ by its definition
(eq. (\ref{eq_sys_1})), we find that%

\begin{align}
&  \nabla_{d}\left(  C_{A}^{\Gamma d}E^{A}\right) \nonumber\\
&  =\nabla_{d}\left(  C_{A}^{\Gamma d}\left(  \mathfrak{N}_{~\alpha}%
^{Aa}\nabla_{a}\phi^{\alpha}-J^{A}\right)  \right)  ,\\
&  =\nabla_{d}\left(  C_{A}^{\Gamma d}\mathfrak{N}_{~\alpha}^{Aa}\right)
\nabla_{a}\phi^{\alpha}+\left(  C_{A}^{\Gamma\lbrack d}\mathfrak{N}_{~\alpha
}^{\left\vert A\right\vert a]}\right)  \nabla_{d}\nabla_{a}\phi^{\alpha
}-\nabla_{d}\left(  C_{A}^{\Gamma d}J^{A}\right)  ,\\
&  =\nabla_{d}\left(  C_{A}^{\Gamma d}\mathfrak{N}_{~\alpha}^{Aa}\right)
\nabla_{a}\phi^{\alpha}+\frac{1}{2}\left(  C_{A}^{\Gamma\lbrack d}%
\mathfrak{N}_{~\alpha}^{\left\vert A\right\vert a]}\right)  R_{~\beta
da}^{\alpha}\phi^{\beta}-\nabla_{d}\left(  C_{A}^{\Gamma d}J^{A}\right)  .
\label{eq_N_CE_1}%
\end{align}
Where we have used the chain rule, equation (\ref{eq_CK_1}) and the following
definition of the curvature tensor
\[
\nabla_{\lbrack d}\nabla_{a]}\phi^{\alpha}=\frac{1}{2}R_{~\beta da}^{\alpha
}\phi^{\beta}.
\]

\sloppy
We note that the resulting expression (\ref{eq_N_CE_1}) does not include
second derivatives of the fields $\phi^{\beta}$, so it is reasonable that it
can be factored as a product of expressions in first derivatives of
$\phi^{\beta}$, i.e. as  $L_{1A}^{\Gamma}\left(  x,\phi,\nabla\phi\right)
E^{A}\left(  x,\phi,\nabla\phi\right)  $. The case for (\ref{eq_int_LE_2}) is
analogous, this is
\begin{align*}
&  \nabla_{d}\left(  M_{\Gamma}^{\tilde{\Delta}d}C_{A}^{\Gamma0}E^{A}\right)
,\\
&  =\nabla_{d}\left(  M_{\Gamma}^{\tilde{\Delta}d}C_{A}^{\Gamma0}\left(
\mathfrak{N}_{~\alpha}^{Aa}\nabla_{a}\phi^{\alpha}-J^{A}\right)  \right)  ,\\
&  =\nabla_{d}\left(  M_{\Gamma}^{\tilde{\Delta}d}C_{A}^{\Gamma0}%
\mathfrak{N}_{~\alpha}^{Aa}\right)  \nabla_{a}\phi^{\alpha}+\frac{1}{2}\left(
M_{\Gamma}^{\tilde{\Delta}[d}C_{A}^{\left\vert \Gamma0\right\vert
}\mathfrak{N}_{~\alpha}^{\left\vert A\right\vert a]}\right)  R_{~\beta
da}^{\alpha}\phi^{\beta},\\
&  -\nabla_{d}\left(  M_{\Gamma}^{\tilde{\Delta}[d}C_{A}^{\left\vert
\Gamma0\right\vert }J^{A}\right)  ,\\
&  =\nabla_{i}\left(  M_{\Gamma}^{\tilde{\Delta}i}C_{A}^{\Gamma0}%
\mathfrak{N}_{~\alpha}^{Aj}\right)  \nabla_{j}\phi^{\alpha}+\frac{1}{2}\left(
M_{\Gamma}^{\tilde{\Delta}[i}C_{A}^{\left\vert \Gamma0\right\vert
}\mathfrak{N}_{~\alpha}^{\left\vert A\right\vert j]}\right)  R_{~\beta
ij}^{\alpha}\phi^{\beta},\\
&  -\nabla_{i}\left(  M_{\Gamma}^{\tilde{\Delta}[i}C_{A}^{\left\vert
\Gamma0\right\vert }J^{A}\right)  ,
\end{align*}
with $i,j=1,...,n$ and where the eq. $M_{\Gamma}^{\tilde{\Delta}0}=0$ and
$C_{A}^{\Gamma0}\mathfrak{N}_{~\alpha}^{A0}=0$ have been used. \ We obtain
again a set of first derivative equations for the fields $\phi^{\beta}$.

We shall give a final justification of why equations (\ref{eq_int_LE_1}) and
(\ref{eq_int_LE_2}) make sense in the next subsection.

\section{First main Theorem: Subsidiary System (SS) \label{Teorema_1}}

As we have shown the set of equations $E^{B}$ can be decomposed into evolution
$e^{\alpha}\left(  \phi\right)  =h_{B}^{\alpha}E^{B}=0$ and constraints
$\psi^{\Delta}\left(  \phi\right)  =C_{B}^{\Delta0}E^{B}=0$ equations. One
mechanism to find solutions for these equations is the free-evolution
approach. That is, we give an initial data $\left.  \phi\right\vert
_{\Sigma_{0}}=\phi_{0}$ satisfying the constraints $\left.  \psi^{\Gamma
}\left(  \phi_{0}\right)  \right\vert _{\Sigma_{0}}=0$, and use the evolution
equations $e^{\alpha}\left(  \phi\right)  =0$ to find the solutions of the
system over the future hypersurfaces $\Sigma_{t}$ with $t>0$. However, these
found solutions may not satisfy the constraints for $t>0$, and therefore may
not be solutions of the complete system. For this reason, we need equations
that tell us how these constraints evolve for $t>0$ when $e^{\alpha}\left(
\phi\right)  =0$. With these equations, we can determine if the constraint are
preserved or not. The standard mechanism to obtain these constraint evolution
equations is: take a time derivative of $\psi^{\Gamma}=n_{a}C_{A}^{\Gamma
a}\mathfrak{N}_{~\alpha}^{Ac}\eta_{c}^{b}\nabla_{b}\phi^{\alpha}-J^{A}$, which
translates into time derivatives of $\phi^{\alpha}$, plus other terms; use the
evolution equations $e^{\alpha}\left(  \phi\right)  =0$ to eliminate
$\partial_{t}\phi^{\alpha}$ from these expressions and (hopefully and after
many calculations) rearrange the resulting terms into expressions that only
include spatial derivatives of $\psi^{\Gamma}$ and lower order terms
proportional to $\psi^{\Gamma}$. As we shall see in the proof of the following
theorem, counting with the identities (\ref{eq_int_LE_1}) and
(\ref{eq_int_LE_2}) is equivalent to the mentioned process (since they include
the time derivatives of $\psi^{\Gamma}$). Actually, if these conditions are
not satisfied, the system has extra constraints that should be added to the
system and whose conservation has to be studied.

The following theorem, called \textit{Subsidiary System,} shows the explicit
form of the subsidiary system (SS) in the quasi-linear case.

\begin{theorem}
\label{teorema_ec_ev_vin_1}Considere the system (\ref{eq_sys_1}) such that the
following conditions hold:

(i) The principal symbol satisfies assumption \ref{cod_K0_rank_max_b}.

(ii) All constraints of the systems come from Geroch fields $C_{A}^{\Gamma a}$
as in eq. (\ref{eq_def_constr_1}) and these $C_{A}^{\Gamma a}$ satisfy
assumption \ref{cod_C0_rank_max_b}.

(iii) The system can admit (or not) extra Geroch fields $M_{A}^{\tilde{\Delta
}a}$ satisfying assumption \ref{As_M_1_b}.

(iv) The integrability conditions (\ref{eq_int_LE_1}) and (\ref{eq_int_LE_2})
are satisfied.

Then, the following off-shell identity is satisfied%
\begin{align}
&  \nabla_{0}\psi^{\Gamma}+\left(  C_{A}^{\Gamma i}h_{~\Delta}^{A}%
+N_{\tilde{\Delta}}^{\Gamma}M_{\Delta}^{\tilde{\Delta}i}\right)  \nabla
_{i}\psi^{\Delta}\nonumber\\
&  =\left(  L_{1A}^{\Gamma}\mathfrak{N}_{~\alpha}^{A0}-\nabla_{d}\left(
C_{A}^{\Gamma d}\mathfrak{N}_{~\alpha}^{A0}\right)  \right)  e^{\alpha}%
-C_{A}^{\Gamma d}\mathfrak{N}_{~\alpha}^{A0}\nabla_{d}e^{\alpha}\nonumber\\
&  +\left(  L_{1A}^{\Gamma}h_{~\Delta}^{A}-\nabla_{d}\left(  C_{A}^{\Gamma
d}h_{~\Delta}^{A}\right)  +N_{\tilde{\Delta}}^{\Gamma}\left(  L_{2A}%
^{\tilde{\Delta}}h_{~\Delta}^{A}-\nabla_{i}\left(  M_{\Delta}^{\tilde{\Delta
}i}\right)  \right)  \right)  \psi^{\Delta}
\label{Eq_ide_ev_off_shell_const_1_a}%
\end{align}
Here the tensors $N_{\tilde{\Delta}}^{\Gamma}$ can be freely chosen. If the
system does not admit Geroch fields $M_{A}^{\tilde{\Delta}a}$, then
$N_{\tilde{\Delta}}^{\Gamma}=0$, $L_{2A}^{\tilde{\Delta}}=0$. In this case,
the SS is unique.

In the on-shell case i.e. $e^{\alpha}\left(  \phi\right)  =0$, the above
identity reduces to the subsidiary system,%
\begin{align}
&  \nabla_{0}\psi^{\Gamma}+\left(  C_{A}^{\Gamma i}h_{~\Delta}^{A}%
+N_{\tilde{\Delta}}^{\Gamma}M_{\Delta}^{\tilde{\Delta}i}\right)  \nabla
_{i}\psi^{\Delta}\nonumber\\
&  =\left(  L_{1A}^{\Gamma}h_{~\Delta}^{A}-\nabla_{d}\left(  C_{A}^{\Gamma
d}h_{~\Delta}^{A}\right)  +N_{\tilde{\Delta}}^{\Gamma}\left(  L_{2A}%
^{\tilde{\Delta}}h_{~\Delta}^{A}-\nabla_{i}\left(  M_{\Delta}^{\tilde{\Delta
}i}\right)  \right)  \right)  \psi^{\Delta} \label{eq_ev_on_shell_const_1_b}%
\end{align}
where $i=1,...,n.$
\end{theorem}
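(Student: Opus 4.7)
The plan is to combine the two integrability conditions (\ref{eq_int_LE_1}) and (\ref{eq_int_LE_2}) with the invertibility identity $\delta_B^A = \mathfrak{N}_\alpha^{A0} h_B^\alpha + h_\Delta^A C_B^{\Delta 0}$ of the preceding lemma, so that every appearance of $E^A$ gets replaced by $\mathfrak{N}_\alpha^{A0} e^\alpha + h_\Delta^A \psi^\Delta$ and the identity naturally separates into an $e^\alpha$-piece (which dies on-shell) and a $\psi^\Delta$-piece (the SS).

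First I would split the divergence in (\ref{eq_int_LE_1}) as $\nabla_d(C_A^{\Gamma d}E^A) = \nabla_0(C_A^{\Gamma 0}E^A) + \nabla_i(C_A^{\Gamma i}E^A)$. Since $\psi^\Gamma = C_A^{\Gamma 0}E^A$, the time piece is exactly $\nabla_0 \psi^\Gamma$, so (\ref{eq_int_LE_1}) gives $\nabla_0\psi^\Gamma = L_{1A}^\Gamma E^A - \nabla_i(C_A^{\Gamma i}E^A)$. Substituting the decomposition of $E^A$ and applying Leibniz produces terms in $e^\alpha$, $\nabla_i e^\alpha$, $\psi^\Delta$ and $\nabla_i\psi^\Delta$. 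Two observations let me promote the spatial derivatives in the coefficient pieces to full space-time derivatives: the scalar $C_A^{\Gamma 0}\mathfrak{N}_\alpha^{A0}$ vanishes by (\ref{eq_CK_1}) at $a=b=0$, and $C_A^{\Gamma 0}h_\Delta^A = \delta_\Delta^\Gamma$ is constant, so the $d=0$ pieces contribute nothing to $\nabla_d(C_A^{\Gamma d}\mathfrak{N}_\alpha^{A0})$, to $\nabla_d(C_A^{\Gamma d}h_\Delta^A)$, and to $C_A^{\Gamma d}\mathfrak{N}_\alpha^{A0}\nabla_d e^\alpha$. Moving $C_A^{\Gamma i}h_\Delta^A\nabla_i\psi^\Delta$ to the left then yields the $N=0$ case of (\ref{Eq_ide_ev_off_shell_const_1_a}).

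To obtain the $N_{\tilde\Delta}^\Gamma$-freedom I then add $N_{\tilde\Delta}^\Gamma$ times (\ref{eq_int_LE_2}). Here Lemma \ref{Lema_M0} is essential: $M_\Gamma^{\tilde\Delta 0}=0$ kills the $d=0$ contribution, so $\nabla_d(M_\Gamma^{\tilde\Delta d}\psi^\Gamma) = (\nabla_i M_\Delta^{\tilde\Delta i})\psi^\Delta + M_\Delta^{\tilde\Delta i}\nabla_i\psi^\Delta$, and the second piece feeds the desired $N_{\tilde\Delta}^\Gamma M_\Delta^{\tilde\Delta i}\nabla_i\psi^\Delta$ into the LHS. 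Expanding the RHS $N_{\tilde\Delta}^\Gamma L_{2A}^{\tilde\Delta}E^A$ via the same decomposition distributes it among the $e^\alpha$- and $\psi^\Delta$-coefficients. The delicate step is that the $e^\alpha$-coefficient displayed in the theorem is $N$-independent, which forces $L_{2A}^{\tilde\Delta}\mathfrak{N}_\alpha^{A0}=0$. I would derive this by matching powers of $\nabla_0\phi^\alpha$ in the off-shell identity (\ref{eq_int_LE_2}): because $M_\Gamma^{\tilde\Delta 0}=0$ the LHS $\nabla_i(M_\Delta^{\tilde\Delta i}\psi^\Delta)$ contains only spatial derivatives of $\phi$, while the only $\nabla_0\phi^\alpha$-dependent piece on the RHS is $L_{2A}^{\tilde\Delta}\mathfrak{N}_\alpha^{A0}\nabla_0\phi^\alpha$, so it must vanish identically.

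The on-shell statement (\ref{eq_ev_on_shell_const_1_b}) is then immediate: on a solution of the evolution equations $e^\alpha\equiv 0$, whence $\nabla_d e^\alpha\equiv 0$, and the entire first row of the RHS of (\ref{Eq_ide_ev_off_shell_const_1_a}) drops out. Uniqueness of the SS when the system admits no $M_A^{\tilde\Delta a}$ is trivial, because (\ref{eq_int_LE_2}) is then vacuous and $N_{\tilde\Delta}^\Gamma$ has no slots to populate. The main obstacle I expect is the careful bookkeeping of Leibniz expansions while tracking which pieces genuinely use $\nabla_i$ and which can be rewritten as $\nabla_d$; together with the justification that $L_{2A}^{\tilde\Delta}\mathfrak{N}_\alpha^{A0}=0$, which is what makes the whole family collapse to the advertised form.
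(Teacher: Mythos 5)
Your proposal is correct and follows essentially the same route as the paper: combine~(\ref{eq_int_LE_1}) and $N_{\tilde\Delta}^\Gamma$ times~(\ref{eq_int_LE_2}), replace $E^A$ by $\mathfrak{N}^{A0}_\alpha e^\alpha + h^A_\Delta\psi^\Delta$ via~(\ref{eq_Kh_hC_1}), apply Leibniz, use $C_A^{\Gamma 0}\mathfrak{N}^{A0}_\alpha=0$, $C_A^{\Gamma 0}h^A_\Delta=\delta^\Gamma_\Delta$ and Lemma~\ref{Lema_M0} to isolate $\nabla_0\psi^\Gamma$ and the spatial pieces, and deduce $L_{2A}^{\tilde\Delta}\mathfrak{N}^{A0}_\alpha=0$ from the off-shell identity. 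The only cosmetic difference is the order of operations: the paper applies Leibniz to the full $\nabla_d(C_A^{\Gamma d}E^A)$ and then recognizes the $d=0$ contribution of $C_A^{\Gamma d}h^A_\Delta\nabla_d\psi^\Delta$ as $\nabla_0\psi^\Gamma$, whereas you split $\nabla_d = \nabla_0 + \nabla_i$ at the outset and then promote the resulting $\nabla_i(\cdots)$ coefficients back to $\nabla_d(\cdots)$ using the vanishing of the temporal pieces; both computations are identical term by term.
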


We notice that if $\left\vert \psi^{\Gamma}\right.  _{\Sigma_{0}}=0,$ then
$\psi^{\Gamma}=0$ is a solution of equation (\ref{eq_ev_on_shell_const_1_b}).
If in addition, the equation (\ref{eq_ev_on_shell_const_1_b}) is well-posed,
the solution $\psi^{\Gamma}=0$ is unique and continue in the initial data.

It is important to emphasize that the set of constraint evolution equations is
not unique when the $M_{\Delta}^{\tilde{\Delta}i}$ fields exist. \ In other
words, constraints $\psi^{\Gamma}$ are solutions of a family of differential
equations (\ref{eq_ev_on_shell_const_1_b}). This family is parametrized by the
field $N_{\tilde{\Delta}}^{\Gamma}$ (which can be freely chosen), producing
the non-uniqueness. An example of this class of systems is the wave equation,
see section\ \ref{S_wave_equation}.

On the other hand, the well-posedness of the SS depends on the explicit form
of its principal symbol
\begin{equation}
B_{~\Delta}^{\Gamma i}:=C_{A}^{\Gamma i}h_{~\Delta}^{A}+N_{\tilde{\Delta}%
}^{\Gamma}M_{\Delta}^{\tilde{\Delta}i}. \label{Eq_simb_vinc_1}%
\end{equation}
Since, this symbol includes the field $h_{~\Delta}^{A}$, which is completely
determined by the choice of $h_{B}^{\alpha}$ (see eq. (\ref{eq_Kh_hC_1}%
))\footnote{Notice that the reverse is true too, $h_{B}^{\alpha}$ is
completely defined by choice of $h_{~\Delta}^{A}$.}, and the free field
$N_{\tilde{\Delta}}^{\Gamma}$, the well-posedness of the subsidiary systems is
given by $h_{B}^{\alpha}$ and $N_{\tilde{\Delta}}^{\Gamma}$. We focus on this
problem in section \ref{teorema_2}, where we provide a theorem on how to
apropriately chose $h_{B}^{\alpha}$ and $N_{\tilde{\Delta}}^{\Gamma}$ to
obtain a well-posed SS.

Finally, when there are no $M_{\Delta}^{\tilde{\Delta}i}$ fields present in
the system, the evolution equations of the constraints are unique, so the
analysis of well-posedness is highly simplified. The Maxwell equations are an
example of this case, see section \ref{S_Maxwell_eq}.

We will discuss the hyperbolicity of the SS in the next sections, focusing on
the constant-coefficient systems where we can give a closed answer.

\subsection{Invariance of the choice of $e^{\alpha}$ in the subsidiary system
\label{invariance_e}}

Given a particular physical theory, different reductions $h_{B}^{\alpha}$ and
$\tilde{h}_{B}^{\alpha}$ lead to different choices of the evolution equations
$e^{\alpha}$ and $\tilde{e}^{\alpha}$ (see subsection \ref{Evo}).\ Therefore,
to study the conservation of the constraints, the SS should be calculated for
each choice of $e^{\alpha}$ or $\tilde{e}^{\alpha}$. In addition, it might
happen that the SS does not exist\footnote{Meaning that it is not possible to
write a set of PDE's where the variables of the PDE's are the constraints.}
for a particular choice of the evolution equations. \ In our scheme, this
dilemma is solved in a quite elegant way. As we explained before, each
reduction $h_{B}^{\alpha}$ ($\tilde{h}_{B}^{\alpha}$) introduces a unique
field $h_{~\Delta}^{A}$ ($\tilde{h}_{~\Delta}^{A}$) given by the equation
(\ref{eq_Kh_hC_1}). \ This field $h_{~\Delta}^{A}$ ($\tilde{h}_{~\Delta}^{A}$)
appear in the equations of the SS (\ref{eq_ev_on_shell_const_1_b}), showing
that the SS always exists and how the information of the reductions
$h_{B}^{\alpha}$ ($\tilde{h}_{B}^{\alpha}$) is propagated to this equation.
The reason for this simple answer is that equation
(\ref{Eq_ide_ev_off_shell_const_1_a}) follows directly from the integrability
conditions (\ref{eq_int_LE_1}) and (\ref{eq_int_LE_2}), which do not depend on
any reduction $h_{B}^{\alpha}$. As it is shown in the proof of the theorem,
these reductions appear only as a trick to transform $C_{A}^{\Gamma a}E^{A}$
into $\left[
\begin{array}
[c]{cc}%
C_{A}^{\Gamma d}\mathfrak{N}_{~\alpha}^{A0} & C_{A}^{\Gamma d}h_{~\Delta}^{A}%
\end{array}
\right]  \left[
\begin{array}
[c]{c}%
e^{\alpha}\\
\psi^{\Delta}%
\end{array}
\right]  $, where the explicit form of $h_{B}^{\alpha}$ plays no role in this
proof, so that, eq. (\ref{Eq_ide_ev_off_shell_const_1_a}) can be obtained for
any $h_{B}^{\alpha}$.

It may happen that in some physical theories the integrability conditions
(\ref{eq_int_LE_1}) and (\ref{eq_int_LE_2}) depend on a specific reduction
$h_{B}^{\alpha}$ and consequently on its corresponding evolution equations
$e^{\alpha}$. In this class of systems, the eqs. (\ref{eq_int_LE_1}) and
(\ref{eq_int_LE_2}) could include 2-order or higher partial derivatives of the
evolution equations. \ This should not modify equation
(\ref{eq_ev_on_shell_const_1_b}) since $e^{\alpha}=0$ in the on-shell case,
but this system would be forced to evolve only with $e^{\alpha}=0$ since any
other choice of the evolution equations $\tilde{e}^{\alpha}$ would not produce
a SS as (\ref{eq_ev_on_shell_const_1_b}). Therefore, the constraint
conservation would not be guaranteed.

\subsection{Proof of Subsidiary System theorem \label{teorema_ec_ev_vin_1_a}}

The proof consists of showing how to go from the sum of (\ref{eq_int_LE_1})
and (\ref{eq_int_LE_2}),%

\begin{equation}
\nabla_{d}\left(  C_{A}^{\Gamma d}E^{A}\right)  +N_{\tilde{\Delta}}^{\Gamma
}\nabla_{d}\left(  M_{\Gamma}^{\tilde{\Delta}d}C_{A}^{\Gamma0}E^{A}\right)
=L_{1A}^{\Gamma}E^{A}+N_{\tilde{\Delta}}^{\Gamma}L_{2A}^{\tilde{\Delta}}E^{A},
\label{Eq_com_conds_int_1}%
\end{equation}
to (\ref{Eq_ide_ev_off_shell_const_1_a}), where $N_{\tilde{\Delta}}^{\Gamma}$
can be freely chosen.

Using (\ref{Eq_e_y_vin_Mat_E_1}) (valid from the assumptions
\ref{cod_K0_rank_max_b} and \ref{cod_C0_rank_max_b}) and (\ref{eq_Kh_hC_1})
the system $E^{A}$ can be written as
\begin{align}
E^{A}  &  =\left[
\begin{array}
[c]{cc}%
\mathfrak{N}_{~\alpha}^{A0} & h_{~\Delta}^{A}%
\end{array}
\right]  \left[
\begin{array}
[c]{c}%
h_{B}^{\alpha}\\
C_{B}^{\Delta0}%
\end{array}
\right]  E^{B},\nonumber\\
&  =\left[
\begin{array}
[c]{cc}%
\mathfrak{N}_{~\alpha}^{A0} & h_{~\Delta}^{A}%
\end{array}
\right]  \left[
\begin{array}
[c]{c}%
e^{\alpha}\\
\psi^{\Delta}%
\end{array}
\right]  . \label{Eq_E_e_phi_1}%
\end{align}
By replacing the latter equation in each of the terms of
(\ref{Eq_com_conds_int_1}), we obtain%
\begin{align}
&  \nabla_{d}\left(  C_{A}^{\Gamma d}E^{A}\right) \nonumber\\
&  =\nabla_{d}\left(  \left[
\begin{array}
[c]{cc}%
C_{A}^{\Gamma d}\mathfrak{N}_{~\alpha}^{A0} & C_{A}^{\Gamma d}h_{~\Delta}^{A}%
\end{array}
\right]  \left[
\begin{array}
[c]{c}%
e^{\alpha}\\
\psi^{\Delta}%
\end{array}
\right]  \right)  ,\nonumber\\
&  =\nabla_{d}\left(  C_{A}^{\Gamma d}\mathfrak{N}_{~\alpha}^{A0}\right)
e^{\alpha}+\nabla_{d}\left(  C_{A}^{\Gamma d}h_{~\Delta}^{A}\right)
\psi^{\Delta}+C_{A}^{\Gamma d}\mathfrak{N}_{~\alpha}^{A0}\nabla_{d}e^{\alpha
}+C_{A}^{\Gamma d}h_{~\Delta}^{A}\nabla_{d}\psi^{\Delta},\nonumber\\
&  =\nabla_{d}\left(  C_{A}^{\Gamma d}\mathfrak{N}_{~\alpha}^{A0}\right)
e^{\alpha}+\nabla_{d}\left(  C_{A}^{\Gamma d}h_{~\Delta}^{A}\right)
\psi^{\Delta}+C_{A}^{\Gamma d}\mathfrak{N}_{~\alpha}^{A0}\nabla_{d}e^{\alpha
}\label{Eq_nabla_C_E__e_phi_1}\\
&  +\nabla_{0}\psi^{\Gamma}+C_{A}^{\Gamma i}h_{~\Delta}^{A}\nabla_{d}%
\psi^{\Delta}.\nonumber
\end{align}
Where we used $C_{A}^{\Gamma0}h_{~\Delta}^{A}=\delta_{\Delta}^{\Gamma}$ and
$i=1,...,n$ in the last equation. On the other hand,%

\begin{align}
L_{1A}^{\Gamma}E^{A}  &  =L_{1A}^{\Gamma}\left[
\begin{array}
[c]{cc}%
\mathfrak{N}_{~\alpha}^{A0} & h_{~\Delta}^{A}%
\end{array}
\right]  \left[
\begin{array}
[c]{c}%
e^{\alpha}\\
\psi^{\Delta}%
\end{array}
\right]  ,\nonumber\\
&  =L_{1A}^{\Gamma}\mathfrak{N}_{~\alpha}^{A0}e^{\alpha}+L_{1A}^{\Gamma
}h_{~\Delta}^{A}\psi^{\Delta}. \label{Eq_L_e_phi_1}%
\end{align}
We conclude that (\ref{eq_int_LE_1}) can be written as
\begin{align*}
\nabla_{d}\left(  C_{A}^{\Gamma d}E^{A}\right)   &  =L_{1A}^{\Gamma}E^{A}\\
\nabla_{d}\left(  C_{A}^{\Gamma d}\mathfrak{N}_{~\alpha}^{A0}\right)
e^{\alpha}+\nabla_{d}\left(  C_{A}^{\Gamma d}h_{~\Delta}^{A}\right)
\psi^{\Delta}+  & \\
+C_{A}^{\Gamma d}\mathfrak{N}_{~\alpha}^{A0}\nabla_{d}e^{\alpha}+\nabla
_{0}\psi^{\Gamma}+C_{A}^{\Gamma i}h_{~\Delta}^{A}\nabla_{d}\psi^{\Delta}  &
=L_{1A}^{\Gamma}\mathfrak{N}_{~\alpha}^{A0}e^{\alpha}+L_{1A}^{\Gamma
}h_{~\Delta}^{A}\psi^{\Delta}.
\end{align*}
Considering now the divergences which involve $M_{\Gamma}^{\tilde{\Delta}d}$,
we obtain:
\begin{align}
\nabla_{d}\left(  M_{\Gamma}^{\tilde{\Delta}d}C_{A}^{\Gamma0}E^{A}\right)   &
=\nabla_{d}\left(  M_{\Gamma}^{\tilde{\Delta}d}\psi^{\Gamma}\right)
,\nonumber\\
&  =\nabla_{d}\left(  M_{\Gamma}^{\tilde{\Delta}d}\right)  \psi^{\Gamma
}+M_{\Gamma}^{\tilde{\Delta}d}\nabla_{d}\left(  \psi^{\Gamma}\right)
,\nonumber\\
&  =\nabla_{i}\left(  M_{\Gamma}^{\tilde{\Delta}i}\right)  \psi^{\Gamma
}+M_{\Gamma}^{\tilde{\Delta}i}\nabla_{i}\left(  \psi^{\Gamma}\right)  .
\label{Eq_vinc_vinc_1}%
\end{align}
Where in the first line we used the definition of $\psi^{\Gamma}=C_{A}%
^{\Gamma0}E^{A}$ and in the third line that $M_{\Gamma}^{\tilde{\Delta}0}$
vanishes (see Lemma \ref{Lema_M0}). The expression for $L_{2A}^{\tilde{\Delta
}}E^{A}$ is
\[
L_{2A}^{\tilde{\Delta}}E^{A}=L_{2A}^{\tilde{\Delta}}\mathfrak{N}_{~\alpha
}^{A0}e^{\alpha}+L_{2A}^{\tilde{\Delta}}h_{~\Delta}^{A}\psi^{\Delta},
\]
so the expression (\ref{eq_int_LE_2}) can be rewritten as
\begin{align*}
\nabla_{d}\left(  M_{\Gamma}^{\tilde{\Delta}d}C_{A}^{\Gamma0}E^{A}\right)   &
=L_{2A}^{\tilde{\Delta}}E^{A}\\
\nabla_{i}\left(  M_{\Gamma}^{\tilde{\Delta}i}\right)  \psi^{\Gamma}%
+M_{\Gamma}^{\tilde{\Delta}i}\nabla_{i}\left(  \psi^{\Gamma}\right)   &
=L_{2A}^{\tilde{\Delta}}\mathfrak{N}_{~\alpha}^{A0}e^{\alpha}+L_{2A}%
^{\tilde{\Delta}}h_{~\Delta}^{A}\psi^{\Delta}\text{.}%
\end{align*}
Recalling that this expression is an identity that holds for all $\phi
^{\alpha}$, it should not be possible that the right hand side of this
equation to contains time derivatives (on $e^{\alpha}$) while the left hand
side does not. This means that,
\[
L_{2A}^{\tilde{\Delta}}\mathfrak{N}_{~\alpha}^{A0}=0,
\]
hence
\begin{equation}
\nabla_{i}\left(  M_{\Gamma}^{\tilde{\Delta}i}\right)  \psi^{\Gamma}%
+M_{\Gamma}^{\tilde{\Delta}i}\nabla_{i}\left(  \psi^{\Gamma}\right)
=L_{2A}^{\tilde{\Delta}}h_{~\Delta}^{A}\psi^{\Delta}.
\label{Eq_vinc_de_los_vinc_1}%
\end{equation}
Finally, replacing these results in eq. (\ref{Eq_com_conds_int_1}) gives
(\ref{Eq_ide_ev_off_shell_const_1_a}), which concludes the proof.

As a final comment, in section \ref{Sec_Vin_M} we said that the fields
$M_{\Gamma}^{\tilde{\Delta}d}$ parameterize the constraints of the constraint.
The explicit expression for these differential relationships between the
constraints is given by the last equation (\ref{Eq_vinc_de_los_vinc_1}).

\section{Constant coefficient and Strong Hyperbolicity (SH)
\label{Sec_Const_coeff_1}}

In this section we introduce a brief summary of paper \cite{Abalos:2018uwg},
we present the definitions and the main result of that work about strong
hyperbolicity (SH) for systems with constraints. This main result gives the
necessary and sufficient conditions under which the system
\begin{equation}
E^{A}:=\mathfrak{N}_{~\alpha}^{Aa}\partial_{a}\phi^{\alpha},
\label{Eq_sis_1_coef_cte_1}%
\end{equation}
has a strong hyperbolic set of evolution equations.

As in \cite{Abalos:2018uwg}, we do not consider the quasi-linear case
(\ref{eq_sys_1}) and focus on the constant coefficient case, where
$\mathfrak{N}_{~\alpha}^{Aa}$ does not depend on $x,\phi$ (i.e. $\nabla
_{c}\mathfrak{N}_{~\alpha}^{Aa}=0$). This simplification leads to a closed
pseudo-differential theorem about SH for the evolution equations (theorem
\ref{Theor_FyL_2}) and will allow us, in the next section, to derive a theorem
about the strong hyperbolicity of the SS (theorem \ref{Theorem_coef_const_2}).
Note also that the covariant derivatives $\nabla_{a}$ have been changed to
partial derivatives $\partial_{a}$ and that the lower order terms have been
suppressed because the SH is not affected by them.

As before, the initial value problem\ consists on solving
\begin{equation}
E^{A}\left(  \phi\right)  =0\text{ with initial data }\left.  \phi^{\alpha
}\right\vert _{\Sigma_{0}}=f^{\alpha}\left(  x\right)  .
\label{Eq_ini_val_E_f_1}%
\end{equation}
To discuss its SH, we convert the problem to its Fourier space and present a
pseudo-differential analysis. Applying to (\ref{Eq_ini_val_E_f_1}) a Fourier
transformation on the spatial variables $x^{i}$, with $i=1,...,n$,\ we obtain
\begin{equation}
\tilde{E}^{A}:=\mathfrak{N}_{~\alpha}^{A0}\partial_{t}\tilde{\phi}^{\alpha
}+i\mathfrak{N}_{~\alpha}^{Ai}k_{i}\tilde{\phi}^{\alpha}=0,
\label{Eq_sis_1_pseudo_1}%
\end{equation}
with
\[
\left.  \tilde{\phi}^{\alpha}\right\vert _{\Sigma_{0}}=\tilde{f}^{\alpha
}\left(  x\right)  .
\]
Where $k_{a}$ is the \textit{wave vector} such that $k_{a}t^{a}=0$ (i.e.
$k_{0}=0$) and $k_{a}\eta_{b}^{a}=k_{b}$ (i.e. $k_{a}=\left(  0,k_{i}\right)
$). \ 

Now we introduce the reduction $h_{~A}^{\beta}\left(  k_{i}\right)  $
(satisfying eq. (\ref{eq_hK0_1})) which may depend on the wave vector $k_{a}$
or not. \ Applying $h_{~A}^{\beta}\left(  k_{i}\right)  $\ to the equation
(\ref{Eq_sis_1_pseudo_1}), we obtain a set of evolution equations for
$\tilde{\phi}^{\beta}$
\begin{equation}
\tilde{e}^{\beta}=\partial_{t}\tilde{\phi}^{\beta}+ih_{~A}^{\beta}\left(
k_{i}\right)  \mathfrak{N}_{~\alpha}^{Ai}k_{i}\tilde{\phi}^{\alpha}=0.
\label{Eq_evol_pseudo_1}%
\end{equation}
As before, the reduction aim is to combine the constraint and time derivative
equations, to produce systems of evolution equations for each field
$\tilde{\phi}^{\beta}$. The main difference is that now these evolution
equations are pseudo-differential and their solutions must be anti-transformed
to obtain solutions of the original system.

The set of evolution equations has to be well-posed to be predictive. This is
a property that depends on the choice of $h_{~A}^{\beta}$ since different
reductions may lead to ill-posed or well-posed systems. \ In particular, we
are considering a sub-class within the well-posed equations, namely the
strongly hyperbolic ones. This leads us to introduce the following definition.

\begin{definition}
\label{def_hyp_fuerte_1}Consider $n_{a}=\nabla_{a}t$ such that the assumption
\ref{cod_K0_rank_max_b} holds, we say that the system
(\ref{Eq_sis_1_coef_cte_1}) is strongly hyperbolic if there exists at least
one reduction $h_{~A}^{\beta}\left(  k\right)  $ satisfying (\ref{eq_hK0_1}),
such that for all $k_{i}$, with $\left\vert k\right\vert =1$, the principal
symbol of the evolution equations $A_{~\alpha}^{\beta i}k_{i}:=h_{~A}^{\beta
}\mathfrak{N}_{~\alpha}^{Ai}k_{i}$ is uniformly diagonalizable with real
eigenvalues. Namely, for all $k_{i}$ with $\left\vert k\right\vert =1,$ there
exists $\left(  T\left(  k\right)  \right)  _{~\gamma}^{\beta}$ such that
$h_{~A}^{\beta}\mathfrak{N}_{~\alpha}^{Ai}k_{i}=$ $\left(  T\left(  k\right)
\right)  _{~\gamma}^{\beta}\Lambda_{~\theta}^{\gamma}\left(  T^{-1}\left(
k\right)  \right)  _{~\alpha}^{\theta}$ with $\Lambda_{~\theta}^{\gamma}$
diagonal and real; and the diagonalization is uniform, which means that there
exists a constant $C>0$ such that
\begin{equation}
\left\vert T\left(  k\right)  \right\vert +\left\vert \left(  T\left(
k\right)  \right)  ^{-1}\right\vert <C. \label{Eq_T_T_C_1}%
\end{equation}

\end{definition}

The norms $\left\vert \cdot\right\vert $, used in $\left\vert k\right\vert =1$
and in the eq. (\ref{Eq_T_T_C_1}), can be any $k_{a}$ independent, positive
definite norms. We will assume for the following definitions and theorems that
the wave vector $k_{i}$ is \textit{normalized to} $\left\vert k\right\vert =1$.

When the reductions $h_{~A}^{\beta}$ satisfy the definition, we call them
\textit{hyperbolizations.} Note that when the system has no constraints, the
reduction $h_{~A}^{\beta}$ is unique and defined by (\ref{eq_hK0_1}).

In the literature, the above definition is presented as a theorem and the
original definition of strong hyperbolicity is another one, but since this
section is only an introduction to the topic, we condense the discussion and
present it as a definition. For more details about the theory we suggest
\cite{sarbach2012continuum} and the reference therein.

We introduce now a set of definitions to conclude with the definition of
canonical angles and after it, the theorem \ref{Theor_FyL_2}.

We call $\Phi$ and $\Psi$ to the vector fibers that include the vectors
$\delta\phi^{\alpha}$ and $X_{A}$ respectively. These spaces contain the right
and left kernel subspaces of the principal symbol $\mathfrak{N}_{~\alpha}%
^{Aa}w_{a}$ (for a given $w_{a}$), whose elements satisfy
\begin{align*}
\mathfrak{N}_{~\alpha}^{Aa}w_{a}\delta\phi^{\alpha}  &  =0,\\
X_{A}\mathfrak{N}_{~\alpha}^{Aa}w_{a}  &  =0.
\end{align*}
respectively. We will use this notation for any other operator, a vector will
belong to the right (left) kernel when it contracts with the down (up) index
operator and the result vanishes.

We introduce the set of planes $S_{n_{a}}^{%
\mathbb{C}
}=\{l_{a}\left(  \lambda\right)  =-\lambda n_{a}+k_{a}=\left(  -\lambda
,k_{i}\right)  .$ for all $k_{a}$ not proportional to $n_{a}$, with
$\left\vert k\right\vert =1$ and $\lambda\in%
\mathbb{C}
$ $\}$. They are complex planes for each fixed $k_{a}$ and they reduce to
lines when $\lambda\in%
\mathbb{R}
$. We call \ $S_{n_{a}}$ to the set of these lines.

We consider the left and right kernel of the principal symbol on these planes
$\mathfrak{N}_{~\alpha}^{Aa}l_{a}\left(  \lambda\right)  $ with \ $l_{a}%
\left(  \lambda\right)  \in S_{n_{a}}^{%
\mathbb{C}
}$. \ For each $k_{a}$, there exist certain values of $\lambda$ called
generalized eigenvalues $\lambda_{i}\left(  k\right)  $ with $i\in D_{\left(
k\right)  }$, $D_{\left(  k\right)  }:=\left\{  1,2,...,q_{\left(  k\right)
}\right\}  $ and $\lambda_{1}\left(  k\right)  <...<\lambda_{q_{\left(
k\right)  }}\left(  k\right)  $ such that $\mathfrak{N}_{~\alpha}^{Aa}%
l_{a}\left(  \lambda_{i}\left(  k\right)  \right)  $ has non-trivial right and
left kernel. We call $\Phi_{R}^{\lambda_{i}\left(  k\right)  }$ and $\Psi
_{L}^{\lambda_{i}\left(  k\right)  }$ to these subspaces of $\Phi$ and $\Psi$
respectively. Notice that the explicit form of these generalized eigenvalues
and the $q_{\left(  k\right)  }$ number of them is $k_{i}-$dependent. We call
$d_{\lambda_{i}\left(  k\right)  }$ to the geometric multiplicity of
$\lambda_{i}\left(  k\right)  $, thus $\dim\Phi_{R}^{\lambda_{i}\left(
k\right)  }=d_{\lambda_{i}\left(  k\right)  }$.

The left kernel behaves differently since its dimension is larger, $\dim
\Psi_{L}^{\lambda_{i}\left(  k\right)  }=d_{\lambda_{i}\left(  k\right)  }+c.$
This difference is due to the fact that for all $k_{i}$ and all $\lambda$, the
dimension of the left kernel of $\mathfrak{N}_{~\alpha}^{Aa}l_{a}\left(
\lambda\right)  $ is $c$ \footnote{Recall that $\mathfrak{N}_{~\alpha}%
^{Aa}l_{a}\left(  \lambda\right)  $ is an $e\times u$ matrix and $e=u+c$, this
means that $\mathfrak{N}_{~\alpha}^{Aa}l_{a}\left(  \lambda\right)  $ always
has a left kernel of dimension equal or greater than $c$.
\par
{}}, except when $\lambda=\lambda_{i}\left(  k\right)  $, where the left
kernel associated to the generalized eigenvalues increases the dimension to
$d_{\lambda_{i}\left(  k\right)  }+c$. A better explanation will be given in
the proof of theorem SH of the SS.

A necessary condition for the well-posedness of the system is that the
generalized eigenvalues be real. When the system satisfies this condition we
call it hyperbolic,

\begin{definition}
The system (\ref{Eq_sis_1_coef_cte_1}) is called hyperbolic, if there exists a
co-vector $n_{a}=\nabla_{a}t$ such that

(a) $\mathfrak{N}_{~\alpha}^{A0}:=\mathfrak{N}_{~\alpha}^{Aa}n_{a}$ has only
trivial right kernel.

(b) For each $l_{a}\left(  \lambda\right)  \in S_{n_{a}}^{%
\mathbb{C}
}$, if $\mathfrak{N}_{~\alpha}^{Aa}l_{a}\left(  \lambda\right)  $ has
non-trivial right kernel, then $\lambda\in%
\mathbb{R}
.$
\end{definition}

Notice that condition (a) is condition \ref{cod_K0_rank_max}.

Now, we introduce over $\Phi$ a positive definite Hermitian form
$G^{\alpha\beta}$. This allows us to define the vector subspace $\Phi
_{L}^{\lambda_{i}\left(  k\right)  }$, as the subspace obtained by projecting
$\Psi_{L}^{\lambda_{i}\left(  k\right)  }$ with $\mathfrak{N}_{~\alpha}%
^{A0}G^{\alpha\beta}$. Since $\Phi_{L}^{\lambda_{i}\left(  k\right)  }$ and
$\Phi_{R}^{\lambda_{i}\left(  k\right)  }$ are vector subspaces of $\Phi$ we
can introduce the canonical angles $\theta_{j}^{\lambda_{i}\left(  k\right)
}$ between them (see \ \cite{afriat1957orthogonal},
\cite{taslaman2014principal} for an introduction to the topic), where the
index $j$ runs from $1$ to $d_{\lambda_{i}\left(  k\right)  }$. These angles
measure the "separation distance" of these vector subspaces. As explained
below, the strong hyperbolicity theorem indicates that these distances should
be bounded for all $k_{i}.$

\begin{theorem}
\label{Theor_FyL_2} \cite{Abalos:2018uwg} \ The constant-coefficient system
(\ref{Eq_sis_1_coef_cte_1}) is strongly hyperbolic (admits at least one
hyperbolization) if and only if it is hyperbolic for some direction
$n_{a}=\nabla_{a}t$ and, for all $i\in D_{\left(  k\right)  }$ and all
normalized $k_{a}$ non-proportional to $n_{a}$, there is a constant maximum
angle $\vartheta<\frac{\pi}{2}$ between the canonical angles of $\Phi
_{L}^{\lambda_{i}\left(  k\right)  }$ and $\Phi_{R}^{\lambda_{i}\left(
k\right)  }$.

This last condition is equivalent to: if $\theta_{j}^{\lambda_{i}\left(
k\right)  }$ are the canonical angles between $\Phi_{L}^{\lambda_{i}\left(
k\right)  }$ and $\Phi_{R}^{\lambda_{i}\left(  k\right)  }$, then there exists
$\vartheta<\frac{\pi}{2}$ such that%
\begin{equation}
\cos\theta_{j}^{\lambda_{i}\left(  k\right)  }\geq\cos\vartheta>0
\label{Eq_cota_ang_can_1}%
\end{equation}
for all normalized $k_{a}$ non-proportional to $n_{a}$, with $i\in D_{\left(
k\right)  }$ and .$j=1,...,d_{\lambda_{i}\left(  k\right)  }$.
\end{theorem}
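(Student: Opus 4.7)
The plan is to translate the uniform-diagonalizability condition of Definition~\ref{def_hyp_fuerte_1} into an intrinsic geometric condition on the principal symbol by identifying the eigenstructure of the reduced symbol $A^{i}k_{i}:=h^{\beta}_{A}\mathfrak{N}^{Ai}_{\alpha}k_{i}$ with $\Phi^{\lambda_{i}(k)}_{R}$ and $\Phi^{\lambda_{i}(k)}_{L}$. First I would establish the key reduction-independent identifications. Using $h^{\beta}_{A}\mathfrak{N}^{A0}_{\alpha}=\delta^{\beta}_{\alpha}$, the right-eigenvalue equation $(A^{i}k_{i}-\lambda\delta^{\beta}_{\alpha})v^{\alpha}=0$ rewrites as $h^{\beta}_{A}\mathfrak{N}^{Aa}_{\alpha}l_{a}(\lambda)v^{\alpha}=0$; a characteristic-polynomial count, combined with the trivial right kernel of $\mathfrak{N}^{A0}_{\alpha}$, shows that the nontrivial solutions occur exactly at $\lambda=\lambda_{i}(k)$ and form $\Phi^{\lambda_{i}(k)}_{R}$. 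Dually, a left eigenvector $u_{\beta}$ produces $X_{A}:=u_{\beta}h^{\beta}_{A}\in\Psi^{\lambda_{i}(k)}_{L}$ with $X_{A}\mathfrak{N}^{A0}_{\alpha}=u_{\alpha}$, so the left eigenspace, raised by $G^{\alpha\beta}$, is exactly $\Phi^{\lambda_{i}(k)}_{L}$. Crucially, these identifications do not depend on the particular reduction $h$.

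For the forward direction, the SH hypothesis provides $T(k)$ with $A^{i}k_{i}=T\Lambda T^{-1}$, $\Lambda$ real and diagonal, and $|T|+|T^{-1}|<C$. Reality of $\Lambda$ yields hyperbolicity. A standard matrix-analysis lemma on the conditioning of a diagonalizing basis bounds $|T||T^{-1}|$ below by the reciprocal of the smallest singular value of the biorthogonal pairing between the columns of $T$ and the rows of $T^{-1}$; under the identifications above this singular value equals $\min_{i,j}\cos\theta^{\lambda_{i}(k)}_{j}$, so the uniform bound on $T$ implies (\ref{Eq_cota_ang_can_1}). For the converse I would first establish cross-annihilation: for $X\in\Psi^{\lambda_{i}}_{L}$ and $v\in\Phi^{\lambda_{j}}_{R}$ with $i\neq j$, subtracting $X_{A}\mathfrak{N}^{Aa}_{\alpha}l_{a}(\lambda_{i})v^{\alpha}=0=X_{A}\mathfrak{N}^{Aa}_{\alpha}l_{a}(\lambda_{j})v^{\alpha}$ and using $l_{a}(\lambda_{i})-l_{a}(\lambda_{j})=-(\lambda_{i}-\lambda_{j})n_{a}$ gives $X_{A}\mathfrak{N}^{A0}_{\alpha}v^{\alpha}=0$, so $\Phi^{\lambda_{i}(k)}_{L}$ annihilates $\Phi^{\lambda_{j}(k)}_{R}$ whenever $i\neq j$. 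Only the intra-eigenvalue pairings matter, and the angle bound therefore delivers uniformly bounded oblique projectors $P_{i}(k)$ onto $\Phi^{\lambda_{i}(k)}_{R}$ along $\bigoplus_{j\neq i}\Phi^{\lambda_{j}(k)}_{R}$. I would then assemble $T(k)$ from bases of $\Phi^{\lambda_{i}(k)}_{R}$ made biorthogonal to $\Phi^{\lambda_{i}(k)}_{L}$ and recover a reduction $h^{\beta}_{A}(k)$ by solving the two linear conditions $h^{\beta}_{A}\mathfrak{N}^{A0}_{\alpha}=\delta^{\beta}_{\alpha}$ and $h^{\beta}_{A}\mathfrak{N}^{Ai}_{\alpha}k_{i}=(T\Lambda T^{-1})^{\beta}_{\alpha}$, exploiting the freedom $h\mapsto h+p$ with $p^{\beta}_{A}\mathfrak{N}^{A0}_{\alpha}=0$ from subsection~\ref{Evo} to satisfy the second.

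The main obstacle will be uniformity in $k$ of the converse construction: the candidate $h(k)$ built from the $P_{i}(k)$ is intrinsically $k$-dependent, and where generalized eigenvalues coalesce the projectors $P_{i}(k)$ may blow up pointwise. The uniform lower bound $\cos\theta^{\lambda_{i}(k)}_{j}\geq\cos\vartheta>0$ is precisely what prevents this degeneration and furnishes the constant $C$ in (\ref{Eq_T_T_C_1}); a rigorous argument requires a compactness reduction on $|k|=1$ combined with continuity of the canonical-angle formula in $k$, with special care taken on the coalescence locus where one must extract the bound through a limiting analysis rather than pointwise eigenspaces. A secondary subtlety is that $\Phi^{\lambda_{i}(k)}_{L}$ is defined through the Hermitian form $G^{\alpha\beta}$, so one must verify that the final hyperbolization does not depend on the arbitrary choice of $G$; this follows because only the angle-based characterization enters the bound in (\ref{Eq_T_T_C_1}), and canonical angles are unitary invariants.
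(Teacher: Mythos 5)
The paper does not prove this theorem: it is quoted verbatim from reference \cite{Abalos:2018uwg}, where the proof actually lives. Section~\ref{Sec_Const_coeff_1} is an exposition (``we introduce a brief summary of paper \cite{Abalos:2018uwg}''), and the statement appears with the citation as a prerequisite for Theorem~\ref{Theorem_coef_const_2}. So there is no in-paper proof to compare your argument against.

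On the merits of the attempt itself, the high-level plan (translate uniform diagonalizability into an angle bound via eigenspace geometry) is the right one, but two steps are not secure and are also where the real work in \cite{Abalos:2018uwg} is concentrated. First, the identification of the eigenspaces of $A^{\;\beta i}_{\;\alpha}k_{i}$ with $\Phi_{R}^{\lambda_{i}(k)}$ and $\Phi_{L}^{\lambda_{i}(k)}$ is incomplete: $\Phi_{R}^{\lambda_{i}(k)}$ is only a subspace of the $\lambda_{i}$-eigenspace of $A$, and $A$ carries additional ``constraint'' eigenvectors that depend on $h$; likewise, $\Phi_{L}^{\lambda_{i}(k)}$ is the $\mathfrak{N}^{A0}G$-projection of the $(d_{\lambda_i(k)}+c)$-dimensional space $\Psi_{L}^{\lambda_{i}(k)}$, and not every element of that space arises as $u_{\beta}h^{\beta}_{A}$ for a left eigenvector $u_{\beta}$ of $A$. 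Without a clean split of the spectrum of $A$ into the $h$-invariant ``physical'' part and the $h$-dependent ``constraint'' part, the forward implication (uniform bound on $T$ implies the angle bound on the intrinsic $\Phi_L/\Phi_R$ pair) does not close. Second, in the converse you try to recover a reduction by solving $h^{\beta}_{A}\mathfrak{N}^{A0}_{\alpha}=\delta^{\beta}_{\alpha}$ and $h^{\beta}_{A}\mathfrak{N}^{Ai}_{\alpha}k_{i}=(T\Lambda T^{-1})^{\beta}_{\alpha}$ simultaneously; the second target cannot be arbitrary, since $h\mapsto h+p$ with $p^{\beta}_{A}\mathfrak{N}^{A0}_{\alpha}=0$ changes $h\mathfrak{N}^{i}k_{i}$ only within the image of the set of such $p$'s under $p\mapsto p\mathfrak{N}^{i}k_{i}$, which generically does not sweep all of $\mathbb{R}^{u\times u}$. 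In particular the physical eigenvalues and eigenvectors of $A$ are forced for every $h$, and only the constraint part is adjustable; the correct construction chooses $h$ so that the constraint eigenvalues are simple, real, and separated from the physical ones, which is the ``hyperbolization 1'' mentioned in Section~\ref{Proof_Theorem_coef_const_2}. Your own caveat about coalescing eigenvalues and uniformity in $k$ is a genuine issue (and one the cited reference handles with a compactness-plus-Kronecker argument), but the two points above need to be repaired before that subtlety even becomes the bottleneck.
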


From the proof of the theorem, it follows how to build all the
hyperbolizations that the system admits, pseudo-differential or not (those
that do not depend on $k_{i}$). However, for simplicity, the hyperbolization
used in the proof is the one where the eigenvalues, adding by $h_{~A}^{\beta}$
to $A_{~\alpha}^{\beta i}k_{i}=h_{~A}^{\beta}\mathfrak{N}_{~\alpha}^{Ai}k_{i}%
$, are simple, different from each other and different from the generalized
eigenvalues for all $k_{i}$. \ When condition (\ref{Eq_cota_ang_can_1}) is
satisfied and this hyperbolization is chosen, it trivially guarantees the
definition \ref{def_hyp_fuerte_1} for $A_{~\alpha}^{\beta i}k_{i}$. We will
use this particular hyperbolization to prove our second main theorem.

\section{Second main theorem: Strong hyperbolicity of the SS \label{teorema_2}%
}

In this section, we continue considering the constant-coefficient case
(\ref{Eq_sis_1_coef_cte_1}), where $\mathfrak{N}_{~\alpha}^{Ab}$ is constant.
This means that $C_{A}^{\Gamma a}$ and $M_{\Delta}^{\tilde{\Delta}i}$ are
constants too since they are defined by the equations $C_{A}^{(a}%
\mathfrak{N}_{~\alpha}^{\left\vert A\right\vert b)}=0$ and $M_{A}%
^{\tilde{\Delta}(a}\mathfrak{N}_{~\alpha}^{\left\vert A\right\vert b)}=0$.
This simplification helps us to present the below closed theorem with simple hypotheses.

Theorem \ref{Theor_FyL_2} says nothing about the preservation or
non-preservation of the constraints during the evolution. For answering this
question, we assume valid all the hypotheses of section
\ref{Seccion_Setting_1} and show the sufficient conditions for the strong
hyperbolicity of the evolution equations of the constraints (eq.
(\ref{eq_ev_on_shell_const_1_b})). As already mentioned, these equations
(\ref{eq_ev_on_shell_const_1_b}) may not be unique since $N_{\tilde{\Delta}%
}^{\Gamma}$ can be freely chosen, i.e. $N_{\tilde{\Delta}}^{\Gamma}$ plays the
role of a reduction. So, we say that the SS is strongly hyperbolic when it is
possible to choose at least one reduction $N_{\tilde{\Delta}}^{\Gamma}$ (or
hyperbolization) such that the set of subsidiary equations
(\ref{eq_ev_on_shell_const_1_b}) is strongly hyperbolic. Following definition
\ref{def_hyp_fuerte_1} and considering eq. (\ref{eq_ev_on_shell_const_1_b}),
this is equivalent to requiring that there exists a reduction $N_{\tilde
{\Delta}}^{\Gamma}\left(  k\right)  $ such that the principal symbol of the
subsidiary system
\[
B_{~\Delta}^{\Gamma i}k_{i}:=C_{A}^{\Gamma i}h_{~\Delta}^{A}k_{i}%
+N_{\tilde{\Delta}}^{\Gamma}M_{\Delta}^{\tilde{\Delta}i}k_{i}%
\]
is uniformly diagonalizable with real eigenvalues. The following second main
theorem, called \textit{strong hyperbolicity of the subsidiary system},
explains under which conditions exists such hyperbolization.

\begin{theorem}
\label{Theorem_coef_const_2}Consider the system of constant coefficients
(\ref{Eq_sis_1_coef_cte_1}). This system admits a hyperbolization
$h_{~A}^{\beta}\left(  k\right)  $ and has\ at least one strongly hyperbolic
subsidiary system associated to the (strongly hyperbolic) evolution equations
$h_{~A}^{\beta}\left(  k\right)  \tilde{E}^{A}=0$ if the following conditions hold:

i) The system satisfies the hypotheses of Theorem \ref{Theor_FyL_2}.

ii) All the constraints of the systems come from Geroch fields $C_{A}^{\Gamma
a}$ as in the eq. (\ref{eq_def_constr_1}) and these $C_{A}^{\Gamma a}$ satisfy
the assumption \ref{cod_C0_rank_max_b}.

iii) The system can admit (or not) extra Geroch fields $M_{A}^{\tilde{\Delta
}a}$ which satisfy assumption \ref{As_M_1_b}.

iv) The integrability conditions (\ref{eq_int_LE_1}) and (\ref{eq_int_LE_2}).
are satisfied.

v) For each $k_{i}$ with $\left\vert k\right\vert =1$, the fields
$M_{A}^{\tilde{\Delta}i}k_{i}$ span the left kernel of $C_{A}^{\Gamma
0}\mathfrak{N}_{~\alpha}^{Aj}k_{j}.$
\end{theorem}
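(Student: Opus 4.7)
The plan is to reduce uniform diagonalizability of the SS principal symbol to that of the evolution principal symbol, which is granted by hypothesis (i) via Theorem \ref{Theor_FyL_2}. The bridge between the two is the algebraic identity
\begin{equation*}
-\lambda\,\delta^{\Gamma}_{\Delta}+B^{\Gamma i}_{\Delta}k_{i}
=\tilde C_{A}^{\Gamma a}\,l_{a}(\lambda)\,h^{A}_{\Delta},
\qquad
\tilde C_{A}^{\Gamma a}:=C_{A}^{\Gamma a}+N^{\Gamma}_{\tilde\Delta}M_{A}^{\tilde\Delta a},
\end{equation*}
which follows from $C^{\Gamma 0}_{A}h^{A}_{\Delta}=\delta^{\Gamma}_{\Delta}$, Lemma \ref{Lema_M0}, and $l_{a}(\lambda)=-\lambda n_{a}+k_{a}$. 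Because $\tilde C_{A}^{\Gamma a}$ is a sum of Geroch fields, the contraction $\tilde C^{\Gamma a}_{A}l_{a}(\lambda)$ always lies in the left kernel $\Psi_{L}^{\lambda}$ of the pencil $\mathfrak{N}^{Aa}_{\alpha}l_{a}(\lambda)$.

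The first step will be to show that the spectrum of $B^{\Gamma i}_{\Delta}k_{i}$ is a subset of the generalized eigenvalues $\{\lambda_{i}(k)\}$, hence real by (i). Starting from a left null covector $y_{\Gamma}$, the vector $X_{A}:=y_{\Gamma}\tilde C^{\Gamma a}_{A}l_{a}(\lambda)$ lies in $\Psi_{L}^{\lambda}$ and satisfies $X_{A}h^{A}_{\Delta}=0$, so the identity (\ref{eq_Kh_hC_1}) forces $X_{B}=z_{\alpha}h^{\alpha}_{B}$ with $z_{\alpha}:=X_{A}\mathfrak{N}^{A0}_{\alpha}$; feeding this back into $X_{A}\mathfrak{N}^{Aa}_{\beta}l_{a}(\lambda)=0$ yields $z_{\alpha}(-\lambda\,\delta^{\alpha}_{\beta}+h^{\alpha}_{A}\mathfrak{N}^{Ai}_{\beta}k_{i})=0$, identifying $\lambda$ as an eigenvalue of the evolution symbol associated to the hyperbolization $h^{\beta}_{A}$ produced by Theorem \ref{Theor_FyL_2}. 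The same argument run in reverse, combined with condition (ii), will give the matching lower bound on geometric multiplicity so that each $\lambda_{i}(k)$ appears in the spectrum of $B$ with the expected dimension.

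The core of the proof — and the main obstacle — is to exhibit an $N^{\Gamma}_{\tilde\Delta}(k)$ for which $B^{\Gamma i}_{\Delta}k_{i}$ is uniformly diagonalizable. For this I plan to invoke the Kronecker decomposition of $\mathfrak{N}^{Aa}_{\alpha}l_{a}(\lambda)$ using the appendix lemmas. Condition (i) (bounded canonical angles) forces every regular Jordan block at a finite generalized eigenvalue to have size one, so these contribute $d_{\lambda_{i}(k)}$ eigenvectors of $B$ at each $\lambda_{i}(k)$ that are already controlled by $h^{\beta}_{A}$ alone. The remaining singular piece of $\Psi_{L}^{\lambda}$ — the $c$-dimensional, $\lambda$-independent subspace coming from the singular Kronecker blocks — must be handled by the $N M$ term, and condition (v) is precisely the statement that $M_{A}^{\tilde\Delta i}k_{i}$ spans at each $k$ the complement needed to realize this. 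I will then choose $N(k)$ so that the singular blocks are mapped to simple real eigenvalues distinct from the $\lambda_{i}(k)$, mirroring the canonical hyperbolization construction used in the proof of Theorem \ref{Theor_FyL_2}. Uniformity in $k$ will follow from the $k$-uniform bound (\ref{Eq_cota_ang_can_1}) on canonical angles for the regular part, together with the fact that (v) holds for all unit $k$ for the singular part. I expect the delicate point to be verifying that condition (v) forces the singular Kronecker blocks to all be of type $L_{1}^{T}$ in a $k$-smooth basis, so that a uniformly bounded $N(k)$ can simultaneously render the SS symbol diagonalizable; this is where the Kronecker-form lemmas of Appendix \ref{Ap_lemmas} will carry most of the weight.
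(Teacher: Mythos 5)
Your lifting identity
\[
-\lambda\,\delta^{\Gamma}_{\Delta}+B^{\Gamma i}_{\Delta}k_{i}
=\tilde C_{A}^{\Gamma a}\,l_{a}(\lambda)\,h^{A}_{\Delta}
\]
is correct, as is the deduction $X_{B}=z_{\alpha}h^{\alpha}_{B}$ with $z_{\alpha}\bigl(-\lambda\delta^{\alpha}_{\beta}+A^{\alpha i}_{\beta}k_{i}\bigr)=0$. But the conclusion you draw from it, namely that $\mathrm{spec}(B)$ lies inside the set of generalized eigenvalues $\{\lambda_{i}(k)\}$ of the original pencil, is false, and the argument breaks exactly where condition (v) tells you it must. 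Using $M_{A}^{\tilde\Delta a}\mathfrak{N}^{A0}_{\alpha}=0$ and the Geroch symmetry $C^{\Gamma(a}_{A}\mathfrak{N}^{|A|0)}_{\alpha}=0$ one finds $z_{\alpha}=-y_{\Gamma}C^{\Gamma 0}_{A}\mathfrak{N}^{Aj}_{\alpha}k_{j}$, so $z_{\alpha}=0$ precisely when $y_{\Gamma}$ lies in the left kernel of $C^{\Gamma 0}_{A}\mathfrak{N}^{Aj}_{\alpha}k_{j}$, i.e.\ by condition (v) in $\mathrm{span}\langle M^{\tilde\Delta j}_{\Gamma}k_{j}\rangle$. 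For those $y_{\Gamma}$ your chain yields no constraint on $\lambda$ at all; and these are exactly the directions in which $B$ acquires genuinely new, $N$-dependent eigenvalues that need not belong to $\mathrm{spec}(A)$, let alone to $\{\lambda_{i}(k)\}$.

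The deeper misconception is the claim that each $\lambda_{i}(k)$ appears in $\mathrm{spec}(B)$ with multiplicity $d_{\lambda_{i}(k)}$. The opposite holds: the generalized eigenvectors $\delta\phi^{\beta}_{\lambda_{i}(k)}$ span the right kernel of the constraint map $C^{\Delta 0}_{B}\mathfrak{N}^{Bi}_{\beta}k_{i}$, so they are annihilated in passing to constraint space and the $\lambda_{i}(k)$ are absent from $\mathrm{spec}(B)$. What does survive is the reduction-dependent spectrum $\{\pi_{i}(k)\}$ of $A^{\alpha i}_{\beta}k_{i}$ (the eigenvalues attached to the $L_{1}^{T}$ part of the Kronecker form and injected by the hyperbolization $h^{\beta}_{A}$), together with a fresh set $\{\rho_{i}(k)\}$ freely assigned via $N$. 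The paper reads this off from the intertwining relation
\[
\bigl(k_{j}C^{\Gamma 0}_{A}\mathfrak{N}^{Aj}_{\alpha}\bigr)\bigl(-\lambda\delta^{\alpha}_{\beta}+A^{\alpha i}_{\beta}k_{i}\bigr)
=\bigl(-\lambda\delta^{\Gamma}_{\Delta}+B^{\Gamma i}_{\Delta}k_{i}\bigr)\bigl(C^{\Delta 0}_{B}\mathfrak{N}^{Bj}_{\beta}k_{j}\bigr),
\]
of which your Geroch-field lift is a weaker consequence: one sees directly that the $\pi_{i}$-eigenvectors of $A$ are pushed forward by $C^{\Delta 0}\mathfrak{N}k$ to the Jordan part of the SS pencil, the $\lambda_{i}$-eigenvectors are killed, and condition (v) combined with Lemma \ref{Lemma_2} then forces the singular part of the SS pencil to consist only of $L_{1}^{T}$ and $L_{0}^{T}$ blocks, so that simple real $\rho_{i}$ can be assigned. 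Your plan needs to be rebuilt around this intertwining and the corrected picture of which eigenvalues survive.
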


Notice that ii), iii) and iv) are the same conditions as in the subsidiary
system theorem, and that we have added one extra condition, the number v).
This has to be included to guarantee the existence of the hyperbolization
$N_{\tilde{\Delta}}^{\Gamma}\left(  k\right)  $, as we will show in the proof
of the theorem. However, this is not a condition that physical systems
necessarily satisfy. For example, there could exist $X_{\Gamma}\left(
k\right)  $ defined only for a particular direction of $k_{i}$ and such that
$X_{\Gamma}\left(  k\right)  C_{A}^{\Gamma0}\mathfrak{N}_{~\alpha}^{Aj}%
k_{j}=0$, so $X_{\Gamma}\left(  k\right)  $ could not be expanded by the
$M_{A}^{\tilde{\Delta}i}k_{i}$ since the latter and any linear combination of
them are defined for all $k_{i}$. We will explain in subsection \ref{teo_3},
after the proof of the theorem, how to deal with this class of cases. These
results will only be valid in the pseudo-differential version, so we may not
be able to extrapolate them so directly to the non-pseudo-differential case.

The proof of this theorem is given in subsection
\ref{Proof_Theorem_coef_const_2}. This proof uses the fact that the
diagonalization bases of the principal symbol of the evolution equations are
related in a particular way to the diagonalization bases of the principal
symbol of the subsidiary system. This relationship was found by Reula
\cite{reula2004strongly}, assuming that there are subsidiary equations for the
constraints and that they are first-order in derivatives. The theorem
presented here completes these ideas since Reula's assumption is obtained as a
result when conditions (ii), (iii), (iv) are satisfied. On the other hand,
these conditions are the hypotheses of the SS theorem for the quasi-linear
case. Therefore, this relationship between the bases is also valid in the
quasi-linear case (\ref{eq_sys_1}), when the reductions $h_{~A}^{\beta}$ and
$N_{\tilde{\Delta}}^{\Gamma}$ cannot depend on the wave vector $k_{a}$. The
problem in these non-pseudo differential cases is that the propagation
velocities of the subsidiary system cannot be freely chosen. This complicates
a possible proof of a general theorem. Nevertheless, the steps of the proof
presented here can be adapted to each particular theory to conclude similar
results. Following all these ideas and to avoid a more complex discussion, we
present here a closed theorem with simple hypotheses for the
constant-coefficient case.

\subsection{Proof of theorem SH of the SS \label{Proof_Theorem_coef_const_2}}

We show the theorem assuming that the system admits non-trivial $M_{\Delta
}^{\tilde{\Delta}i}k_{i}$. The proof for the case where the system does not
admit $M_{\Delta}^{\tilde{\Delta}i}k_{i}$ will be trivial from the previous
case. We comment on this at the end.

By (i) and theorem \ref{Theor_FyL_2}, we know that there exists a family of
hyperbolizations $h_{B}^{\alpha}\left(  k\right)  $ of the system
(\ref{Eq_sis_1_coef_cte_1}). From this family, a particular hyperbolization
was used in \cite{Abalos:2018uwg} to prove theorem \ref{Theor_FyL_2}, we call
it \textit{hyperbolization 1}. We shall comment and use about it in the following.

The idea of the proof of our theorem is to show that if we choose the
\textit{hyperbolization 1} $h_{B}^{\alpha}\left(  k\right)  $, then there
exists $N_{\tilde{\Delta}}^{\Gamma}\left(  k\right)  $ such that the principal
symbol of the subsidiary system $B_{~\Delta}^{\Gamma i}k_{i}:=C_{A}^{\Gamma
i}h_{~\Delta}^{A}k_{i}+N_{\tilde{\Delta}}^{\Gamma}M_{\Delta}^{\tilde{\Delta}%
i}k_{i}$ satisfies the definition \ref{def_hyp_fuerte_1} and therefore the
subsidiary equations are strongly hyperbolic. Recall that $h_{B}^{\alpha
}\left(  k\right)  $ defines the evolution equations $\tilde{e}^{\alpha
}\left(  \tilde{\phi}\right)  =h_{B}^{\alpha}\left(  k\right)  \tilde{E}%
^{B}=0$ whose principal symbol is $\ A_{~\alpha}^{\beta i}k_{i}=h_{~A}^{\beta
}\mathfrak{N}_{~\alpha}^{Ai}k_{i}$.

The characteristic structure of $\ A_{~\alpha}^{\beta i}k_{i}$: the
eigenvalues of $A_{~\alpha}^{\beta i}k_{i}$ define the propagation velocities
of the system \ (see \cite{sarbach2012continuum}). These eigenvalues are
divided into two groups. Those we call the "\textit{physics}", which are
associated to the evolution of the physically relevant fields and do not
change no matter the chosen reduction $h_{~A}^{\beta}$ ; and the rest, which
we call the "\textit{constraints 1}", that depend on the chosen reduction. As
shown in \cite{Abalos:2018uwg}, the "constraints 1" can be freely chosen using
specific reductions. Particularly, by choosing the "hyperbolization 1", the
matrix $A_{~\alpha}^{\beta i}k_{i}$ becomes uniformly diagonalizable with real
eigenvalues. This hyperbolization satisfies that for each $k_{i}$ the
"constraints 1" are simple, non-degenerate, different from each other and
different from the "physics".

The characteristic structure of $\ B_{~\Delta}^{\Gamma i}k_{i}$: we will show
that the eigenvalues of $B_{~\Delta}^{\Gamma i}k_{i}$ are also divided into
two groups. The "constraints 1" (inherited from $A_{~\alpha}^{\beta i}k_{i}$),
which remain unchanged by any choice of $N_{\tilde{\Delta}}^{\Gamma}$; and the
other group which we call the "constraints 2", that depend on $N_{\tilde
{\Delta}}^{\Gamma}$.

Following these ideas, we will show, for each normalized $k_{i}$, that:

a) The set of "constrains 1" are all the generalized eigenvalues of the pencil%
\begin{equation}
\left[
\begin{array}
[c]{c}%
-\delta_{\Delta}^{\Gamma}\lambda+C_{A}^{\Gamma j}h_{~\Delta}^{A}k_{j}\\
M_{\Delta}^{\tilde{\Delta}j}k_{j}%
\end{array}
\right]  \label{Eq_pencil_cons_1}%
\end{equation}

b) In the Kronecker decomposition (see \cite{gantmacher1992theory} and
\cite{gantmakher1998theory} for its definition) of this pencil, all its Jordan
blocks are of dimension 1 for all $k_{i}$. \ In other words, for each $k_{i}$,
these generalized eigenvalues are simple, non-degenerated and different from
each other. \ 

This implies that (\ref{Eq_pencil_cons_1}) satisfies the condition
(\ref{Eq_cota_ang_can_1}) for canonical angles. So we can use theorem
\ref{Theor_FyL_2} applied to the following pseudo-differential equations
\begin{equation}
\left[
\begin{array}
[c]{c}%
\delta_{\Delta}^{\Gamma}\partial_{t}+iC_{A}^{\Gamma j}h_{~\Delta}^{A}k_{j}\\
M_{\Delta}^{\tilde{\Delta}j}k_{j}%
\end{array}
\right]  \psi^{\Delta}=0. \label{Eq_pencil_ cons_2}%
\end{equation}
\ Thus, using the thesis of this theorem, we conclude that there exists (a
hyperbolization) $N_{\tilde{\Delta}}^{\Gamma}\left(  k\right)  $ such that
$B_{~\Delta}^{\Gamma i}k_{i}=C_{A}^{\Gamma i}h_{~\Delta}^{A}k_{i}%
+N_{\tilde{\Delta}}^{\Gamma}M_{\Delta}^{\tilde{\Delta}i}k_{i}$ satisfies the
definition \ref{def_hyp_fuerte_1}. \ Here, $N_{\tilde{\Delta}}^{\Gamma}\left(
k\right)  $ is chosen as the "hyperbolization 1" for (\ref{Eq_pencil_ cons_2})
and such that the eigenvalues "constraints 2" are simple, non-degenerate,
different from each other and different from "constraints 1".

This discussion says that we should show (a) and (b) from (i), (ii), (iii),
(iv) and (v) to complete the proof of our theorem. \ Although, we should also
justify the use of the pseudo-differential reductions $h_{B}^{\alpha}\left(
k\right)  $, $h_{~\Delta}^{A}\left(  k\right)  $ and $N_{\tilde{\Delta}%
}^{\Gamma}\left(  k\right)  $.

\subsubsection{Subsidiary system in Fourier's form\label{Subsi_Fourier}}

We first reproduce the calculations of subsection \ref{teorema_ec_ev_vin_1_a}
until we arrive at the evolution equations for the constraints in their
Fourier version. We have to repeat these steps since the quasi-linear system
includes non-linearities that complicate the Fourier analysis and these
systems do not admit pseudo-differential reductions. The use of these
reductions is only valid in the constant coefficients case and after applying
the Fourier transform to the system.

Recalling that $C_{A}^{\Gamma a}$ and $M_{\Gamma}^{\tilde{\Delta}a}$ are
constants, it is easy to prove the identity
\[
\left(  n_{a}\partial_{t}+ik_{a}\right)  \left[
\begin{array}
[c]{c}%
C_{A}^{\Gamma a}\\
M_{\Delta}^{\tilde{\Delta}a}%
\end{array}
\right]  \mathfrak{N}_{~\alpha}^{Ab}\left(  n_{b}\partial_{t}+ik_{b}\right)
=0.
\]
Multiplying this expression by $\tilde{\phi}^{\alpha}$, we obtain
\begin{equation}
\left[
\begin{array}
[c]{c}%
C_{A}^{\Gamma0}\partial_{t}+iC_{A}^{\Gamma j}k_{j}\\
iM_{\Delta}^{\tilde{\Delta}j}k_{j}C_{A}^{\Delta0}%
\end{array}
\right]  \tilde{E}^{A}=0, \label{eq_id_1}%
\end{equation}
where the expressions $C_{A}^{\Gamma0}=C_{A}^{\Gamma a}n_{a}$, $k_{a}=\left(
0,k_{i}\right)  $ and $M_{\Delta}^{\tilde{\Delta}0}=0$ have been used.

We introduce the identity $\delta_{B}^{A}=\left[
\begin{array}
[c]{cc}%
\mathfrak{N}_{~\alpha}^{A0} & h_{~\Delta}^{A}\left(  k\right)
\end{array}
\right]  \left[
\begin{array}
[c]{c}%
h_{B}^{\alpha}\left(  k\right) \\
C_{B}^{\Delta0}%
\end{array}
\right]  $ in the above equation and separate the terms as follow:%
\begin{align}
&  \left[
\begin{array}
[c]{c}%
C_{A}^{\Gamma0}\partial_{t}+iC_{A}^{\Gamma j}k_{j}\\
iM_{A}^{\tilde{\Delta}j}k_{j}%
\end{array}
\right]  \left[
\begin{array}
[c]{cc}%
\mathfrak{N}_{~\alpha}^{A0} & h_{~\Delta}^{A}\left(  k\right)
\end{array}
\right] \nonumber\\
&  =\left[
\begin{array}
[c]{cc}%
-iC_{A}^{\Gamma0}\mathfrak{N}_{~\alpha}^{Aj}k_{j} & \delta_{\Delta}^{\Gamma
}\partial_{t}+iC_{A}^{\Gamma j}k_{j}h_{~\Delta}^{A}\\
0 & iM_{\Delta}^{\tilde{\Delta}j}k_{j}%
\end{array}
\right]  , \label{eq_id_3}%
\end{align}
where we used that $M_{\Delta}^{\tilde{\Delta}0}=0$, $C_{B}^{\Gamma
0}h_{~\Delta}^{A}=\delta_{\Delta}^{\Gamma}$, $-C_{A}^{\Gamma0}\mathfrak{N}%
_{~\alpha}^{Aj}=C_{A}^{\Gamma i}\mathfrak{N}_{~\alpha}^{A0}$. And%

\begin{align}
\left[
\begin{array}
[c]{c}%
\tilde{e}^{\beta}\\
\tilde{\psi}^{\Delta}%
\end{array}
\right]   &  :=\left[
\begin{array}
[c]{c}%
h_{B}^{\alpha}\left(  k\right) \\
C_{B}^{\Delta0}%
\end{array}
\right]  \tilde{E}^{B}\label{eq_id_4}\\
&  =\left[
\begin{array}
[c]{c}%
\partial_{t}\tilde{\phi}^{\beta}+ih_{~A}^{\beta}\left(  k_{i}\right)
\mathfrak{N}_{~\alpha}^{Ai}k_{i}\tilde{\phi}^{\alpha}\\
iC_{B}^{\Delta0}\mathfrak{N}_{~\alpha}^{Ai}k_{i}\tilde{\phi}^{\alpha}%
\end{array}
\right]  , \label{eq_id_5}%
\end{align}
where we used that $h_{~A}^{\beta}\mathfrak{N}_{~\alpha}^{A0}=\delta_{\alpha
}^{\beta}$ and $C_{B}^{\Delta0}\mathfrak{N}_{~\alpha}^{A0}=0$. So, by
contracting eq. (\ref{eq_id_3}) with (\ref{eq_id_4}), the identity
(\ref{eq_id_1}) is rewritten as
\begin{equation}
\left[
\begin{array}
[c]{cc}%
-iC_{A}^{\Gamma0}\mathfrak{N}_{~\alpha}^{Aj}k_{j} & \delta_{\Delta}^{\Gamma
}\partial_{t}+iC_{A}^{\Gamma j}k_{j}h_{~\Delta}^{A}\\
0 & iM_{\Delta}^{\tilde{\Delta}j}k_{j}%
\end{array}
\right]  \left[
\begin{array}
[c]{c}%
\tilde{e}^{\beta}\\
\tilde{\psi}^{\Delta}%
\end{array}
\right]  =0. \label{eq_id_6}%
\end{equation}

In the on-shell case, when the evolution equations are satisfied $\tilde
{e}^{\beta}=0,$ we obtain the system (\ref{Eq_pencil_ cons_2}), that is,
\[
\left[
\begin{array}
[c]{c}%
\delta_{\Delta}^{\Gamma}\partial_{t}+iC_{A}^{\Gamma j}k_{j}h_{~\Delta}^{A}\\
iM_{\Delta}^{\tilde{\Delta}j}k_{j}%
\end{array}
\right]  \tilde{\psi}^{\Delta}=0.
\]
The constraints $\tilde{\psi}^{\Delta}$ satisfy all these pseudo-differential
equations, then the final expression for the subsidiary system \ (on-shell
case $\tilde{e}^{\alpha}\left(  \tilde{\phi}\right)  =0$) is
\begin{equation}
\partial_{0}\psi^{\Gamma}+\left(  C_{A}^{\Gamma i}h_{~\Delta}^{A}%
+N_{\tilde{\Delta}}^{\Gamma}M_{\Delta}^{\tilde{\Delta}i}\right)  k_{i}%
\psi^{\Delta}=0. \label{Eq_sist_B_1}%
\end{equation}
Notice that the principal symbol of the subsidiary system is the same as in
the quasi-linear case (see eq. (\ref{Eq_simb_vinc_1})), the difference here is
that $h_{~\Delta}^{A}\left(  k\right)  $ and $N_{\tilde{\Delta}}^{\Gamma
}\left(  k\right)  $ can depend on the wave vector.

We also note that by contracting the eq. (\ref{eq_id_3}) with (\ref{eq_id_5})
the following identity is obtained%
\begin{equation}
\left[
\begin{array}
[c]{cc}%
-iC_{A}^{\Gamma0}\mathfrak{N}_{~\alpha}^{Aj}k_{j} & \delta_{\Delta}^{\Gamma
}\partial_{t}+iC_{A}^{\Gamma j}k_{j}h_{~\Delta}^{A}\\
0 & iM_{\Delta}^{\tilde{\Delta}j}k_{j}%
\end{array}
\right]  \left[
\begin{array}
[c]{c}%
\partial_{t}\tilde{\phi}^{\beta}+ih_{~A}^{\beta}\left(  k_{i}\right)
\mathfrak{N}_{~\alpha}^{Ai}k_{i}\tilde{\phi}^{\alpha}\\
iC_{B}^{\Delta0}\mathfrak{N}_{~\alpha}^{Ai}k_{i}\tilde{\phi}^{\alpha}%
\end{array}
\right]  =0. \label{Eq_left_ker_K_1}%
\end{equation}
By condition v), we are assuming that $M_{\Delta}^{\tilde{\Delta}j}k_{j}$
expands the entire left kernel of $C_{B}^{\Delta0}\mathfrak{N}_{~\alpha}%
^{Ai}k_{i}$, otherwise, the above system would be modified as follows
\begin{equation}
\left[
\begin{array}
[c]{cc}%
-iC_{A}^{\Gamma0}\mathfrak{N}_{~\alpha}^{Aj}k_{j} & \delta_{\Delta}^{\Gamma
}\partial_{t}+iC_{A}^{\Gamma j}k_{j}h_{~\Delta}^{A}\\
0 & iM_{\Delta}^{\tilde{\Delta}j}k_{j}\\
0 & iX_{\Delta}^{\check{s}}\left(  k\right)
\end{array}
\right]  \left[
\begin{array}
[c]{c}%
\tilde{e}^{\beta}\\
\tilde{\psi}^{\Delta}%
\end{array}
\right]  =0. \label{Eq_sin_v_1}%
\end{equation}
Where the vectors $X_{\Delta}^{\check{s}}\left(  k\right)  $ satisfy
$X_{\Delta}^{\check{s}}\left(  k\right)  C_{B}^{\Delta0}\mathfrak{N}_{~\alpha
}^{Ai}k_{i}=0$ for some $k_{i}$ directions and cannot be obtained from linear
combinations of $M_{\Delta}^{\tilde{\Delta}j}k_{j}$. The index $\check{s}$
numbers these vectors such that, for each $k_{i}$, $span\left\langle
X_{\Delta}^{\check{s}},M_{\Delta}^{\tilde{\Delta}j}k_{j}\right\rangle
=left\_\ker\left(  C_{B}^{\Delta0}\mathfrak{N}_{~\alpha}^{Ai}k_{i}\right)  $.
We will return to this discussion after the proof of this theorem and explain
how to obtain the strong hyperbolicity of the subsidiary system by suppressing
condition v).

\subsubsection{Relationship between the principal symbols}

As a second step, we study the relationship between the principal symbols of
the evolution equations and the subsidiary system. Every expression found in
this subsubsection is also valid in the quasi-linear case.

For each normalized $k_{a}=\left(  0,k_{i}\right)  $, consider the lines
$l_{a}\left(  \lambda\right)  =-n_{a}\lambda+k_{a}\in S_{n_{a}}$. Using the
equations (\ref{Eq_C_K_nueva_1}) and (\ref{eq_M_K_1}) we can conclude the
following identities
\begin{align}
0  &  =l_{a}\left(  \lambda\right)  M_{\Gamma}^{\tilde{\Delta}a}C_{A}%
^{\Gamma0}\mathfrak{N}_{~\alpha}^{Ab}l_{b}\left(  \lambda\right)
,\label{Eq_M_C0_K_1}\\
&  =k_{i}M_{\Gamma}^{\tilde{\Delta}i}\left(  C_{A}^{\Gamma0}\mathfrak{N}%
_{~\alpha}^{Aj}k_{j}\right)  , \label{Eq_M_C0_K_2}%
\end{align}
and%
\begin{align*}
0  &  =l_{a}\left(  \lambda\right)  C_{A}^{\Gamma a}\left[
\begin{array}
[c]{cc}%
\mathfrak{N}_{~\alpha}^{A0} & h_{~\Delta}^{A}%
\end{array}
\right]  \left[
\begin{array}
[c]{c}%
h_{B}^{\alpha}\\
C_{B}^{\Delta0}%
\end{array}
\right]  \mathfrak{N}_{~\beta}^{Bb}l_{b}\left(  \lambda\right)  ,\\
&  =-\left(  l_{a}C_{A}^{\Gamma0}\mathfrak{N}_{~\alpha}^{Aa}\right)  \left(
h_{B}^{\alpha}\mathfrak{N}_{~\beta}^{Bb}l_{b}\right)  +\left(  l_{a}%
C_{A}^{\Gamma a}h_{~\Delta}^{A}\right)  \left(  C_{B}^{\Delta0}\mathfrak{N}%
_{~\beta}^{Bb}l_{b}\right)  ,\\
&  =-\left(  k_{j}C_{A}^{\Gamma0}\mathfrak{N}_{~\alpha}^{Aj}\right)  \left(
-\lambda\delta_{\beta}^{\alpha}+h_{~A}^{\alpha}\mathfrak{N}_{~\beta}^{Ai}%
k_{i}\right)  +\left(  -\lambda\delta_{\Delta}^{\Gamma}+C_{A}^{\Gamma
j}h_{~\Delta}^{A}k_{j}\right)  \left(  C_{B}^{\Delta0}\mathfrak{N}_{~\beta
}^{Bj}k_{j}\right)  .
\end{align*}

Notice that these equations are exactly the rows of (\ref{Eq_left_ker_K_1}) by
replacing $\partial_{t}$ by $-i\lambda$ and dividing by $i$.\ We conclude the
following equation
\begin{equation}
\left(  k_{j}C_{A}^{\Gamma0}\mathfrak{N}_{~\alpha}^{Aj}\right)  \left(
-\lambda\delta_{\beta}^{\alpha}+h_{~A}^{\alpha}\mathfrak{N}_{~\beta}^{Ai}%
k_{i}\right)  =\left(  -\lambda\delta_{\Delta}^{\Gamma}+C_{A}^{\Gamma
j}h_{~\Delta}^{A}k_{j}\right)  \left(  C_{B}^{\Delta0}\mathfrak{N}_{~\beta
}^{Bj}k_{j}\right)  . \label{Eq_simbs_1}%
\end{equation}

This equation was found in \cite{reula2004strongly}, assuming that the
subsidiary system was first order in derivatives. The latter equation is not
necessarily valid unless the system has the structure associated with the
Geroch fields presented here, i.e. hypotheses ii), iii) and iv) of the theorem.

From the eq. (\ref{Eq_M_C0_K_2}) we know that $k_{i}M_{\Gamma}^{\tilde{\Delta
}i}$ belongs to the left-hand kernel of $C_{A}^{\Gamma0}\mathfrak{N}_{~\alpha
}^{Aj}k_{j}$. Therefore, considering%
\[
B_{~\Delta}^{\Gamma j}k_{j}:=\left(  C_{A}^{\Gamma j}h_{~\Delta}^{A}%
+N_{\tilde{\Delta}}^{\Gamma}M_{\Delta}^{\tilde{\Delta}j}\right)  k_{j}%
\]
with free $N_{\tilde{\Delta}}^{\Gamma}$, the above equation can be rewritten
as
\begin{equation}
\left(  k_{j}C_{A}^{\Gamma0}\mathfrak{N}_{~\alpha}^{Aj}\right)  \left(
-\lambda\delta_{\beta}^{\alpha}+A_{~\beta}^{\alpha i}k_{i}\right)  =\left(
-\delta_{\Delta}^{\Gamma}\lambda+B_{~\Delta}^{\Gamma j}k_{j}\right)  \left(
C_{B}^{\Delta0}\mathfrak{N}_{~\beta}^{Bi}k_{i}\right)  .
\label{Eq_rel_simbolos_1}%
\end{equation}

This last expression shows how the principal symbols of the evolution
equations for $\phi^{\alpha}$ are related to the principal symbol of the
subsidiary system.

\subsubsection{Left Kernel of $\mathfrak{N}_{~\alpha}^{Ab}l_{b}\left(
\lambda\right)  $}

In this subsubsection, we shall choose a basis for the left kernel of
$\mathfrak{N}_{~\alpha}^{Ab}l_{b}\left(  \lambda\right)  $. This basis will
allow us to find the Kronecker structure of the pencil $\mathfrak{N}_{~\alpha
}^{Ab}l_{b}\left(  \lambda\right)  $ in the following subsubsection.

From the previous subsubsections, we know that
\[
0=\left(  -\lambda n_{a}+k_{a}\right)  \left[
\begin{array}
[c]{c}%
C_{A}^{\Gamma a}\\
M_{\Delta}^{\tilde{\Delta}a}%
\end{array}
\right]  \mathfrak{N}_{~\alpha}^{Ab}\left(  -\lambda n_{b}+k_{b}\right)  .
\]
As before, using equation (\ref{eq_Kh_hC_1}) this expression can be rewritten
as
\[
0=\left[
\begin{array}
[c]{cc}%
-C_{A}^{\Gamma0}\mathfrak{N}_{~\alpha}^{Aj}k_{j} & -\lambda\delta_{\Delta
}^{\Gamma}+\left(  C_{A}^{\Gamma j}h_{~\Delta}^{A}\right)  k_{j}\\
0 & M_{\Delta}^{\tilde{\Delta}j}k_{j}%
\end{array}
\right]  \left[
\begin{array}
[c]{c}%
-\lambda\delta_{\alpha}^{\beta}+h_{~A}^{\beta}\mathfrak{N}_{~\alpha}^{Ai}%
k_{i}\\
C_{B}^{\Delta0}\mathfrak{N}_{~\alpha}^{Ai}k_{i}%
\end{array}
\right]  .
\]
Therefore, the basis we are looking for will result from the choice of a
subset of vectors of
\begin{equation}
\left[
\begin{array}
[c]{cc}%
-C_{A}^{\Gamma0}\mathfrak{N}_{~\alpha}^{Aj}k_{j} & -\lambda\delta_{\Delta
}^{\Gamma}+\left(  C_{A}^{\Gamma j}h_{~\Delta}^{A}\right)  k_{j}\\
0 & M_{\Delta}^{\tilde{\Delta}j}k_{j}%
\end{array}
\right]  . \label{left_ker_1}%
\end{equation}

Notice that, for each $k_{j}$, the pencil
\begin{align}
\lambda I_{\alpha}^{A}+K_{\alpha}^{A}  &  :=\left[
\begin{array}
[c]{c}%
h_{B}^{\beta}\\
C_{B}^{\Delta0}%
\end{array}
\right]  \mathfrak{N}_{~\alpha}^{Bb}l_{b}\left(  \lambda\right)  =\left[
\begin{array}
[c]{c}%
-\lambda\delta_{\alpha}^{\beta}+h_{~A}^{\beta}\mathfrak{N}_{~\alpha}^{Ai}%
k_{i}\\
C_{B}^{\Delta0}\mathfrak{N}_{~\alpha}^{Ai}k_{i}%
\end{array}
\right] \label{eq_pencil_h_C_K_1}\\
&  =\lambda\left[
\begin{array}
[c]{c}%
-\delta_{\beta}^{\alpha}\\
0
\end{array}
\right]  +\left[
\begin{array}
[c]{c}%
h_{~A}^{\beta}\mathfrak{N}_{~\alpha}^{Ai}k_{i}\\
C_{B}^{\Delta0}\mathfrak{N}_{~\alpha}^{Ai}k_{i}%
\end{array}
\right] \nonumber
\end{align}
has the form of the pencil (\ref{pencil_1}) in appendix \ref{Ap_lemmas}. So,
by making use of the lemma \ref{Lemma_1}) (in this appendix) and noticing that
\ $\lambda I_{\alpha}^{A}+K_{\alpha}^{A}$ and $\mathfrak{N}_{~\alpha}%
^{Bb}l_{b}\left(  \lambda\right)  $ are related by an invertible matrix
independent of $\lambda$, we conclude that
\[
\dim\left(  left\_\ker\left(  \mathfrak{N}_{~\alpha}^{Bb}l_{b}\left(
\lambda\right)  \right)  \right)  =c
\]
for any $\lambda$ different from the generalized eigenvalues $\lambda
_{i}\left(  k\right)  $. This means that we have to choose $c$ linearly
independent vectors of (\ref{left_ker_1}) as the left kernel basis of
$\mathfrak{N}_{~\alpha}^{Bb}l_{b}\left(  \lambda\right)  $.

First, since $C_{A}^{\Gamma0}\mathfrak{N}_{~\alpha}^{Ai}k_{i}$ plays a very
important role in the rest of the proof, we introduce some definitions
associated with this operator. For each $k_{i}$, we call
\begin{equation}
d\left(  k\right)  :=\dim\left(  right\_\ker\left(  C_{A}^{\Gamma
0}\mathfrak{N}_{~\alpha}^{Ai}k_{i}\right)  \right)  \label{Eq_d_1}%
\end{equation}
to the dimension of the right kernel of $C_{A}^{\Gamma0}\mathfrak{N}_{~\alpha
}^{Ai}k_{i}$,
\begin{equation}
r\left(  k\right)  :=rank\left(  C_{A}^{\Gamma0}\mathfrak{N}_{~\alpha}%
^{Ai}k_{i}\right)  \label{Eq_r_1}%
\end{equation}
to its rank and
\begin{equation}
s\left(  k\right)  =:\dim\left(  left\_\ker\left(  C_{A}^{\Gamma0}%
\mathfrak{N}_{~\alpha}^{Ai}k_{i}\right)  \right)  \label{Eq_s_1}%
\end{equation}
to the dimension of its left kernel. \ We also recall that, by the
rank-nullity theorem,
\begin{align}
u  &  =r\left(  k\right)  +d\left(  k\right)  ,\label{Eq_u_r_d_1}\\
c  &  =r\left(  k\right)  +s\left(  k\right)  . \label{Eq_c_r_s_2}%
\end{align}

By hypothesis v), for each $k_{i}$, the vectors $M_{\Delta}^{\tilde{\Delta}%
j}k_{j}$ expand the left kernel of $C_{A}^{\Gamma0}\mathfrak{N}_{~\alpha}%
^{Ai}k_{i}$ (whose dimension is $s\left(  k\right)  $). Therefore, we choose
the following $s\left(  k\right)  $ linearly independent vectors
\[
m_{B}^{zj}k_{j}:=h_{\tilde{\Delta}}^{r}\left[
\begin{array}
[c]{cc}%
0 & M_{\Delta}^{\tilde{\Delta}j}k_{j}%
\end{array}
\right]  \left[
\begin{array}
[c]{c}%
h_{B}^{\alpha}\\
C_{B}^{\Delta0}%
\end{array}
\right]
\]
as part of our basis. These are obtained from the rows of (\ref{left_ker_1}),
where $z=1,...,s\left(  k\right)  $ and $h_{\tilde{\Delta}}^{r}$ is the
projector representing our choice. \ These vectors belong to the left kernel
of $\mathfrak{N}_{~\alpha}^{Bb}l_{b}\left(  \lambda\right)  $ and have the
property that they do not depend on $\lambda$. Notice also that by the form of
(\ref{eq_pencil_h_C_K_1}) any other vector of the left kernel of
$\mathfrak{N}_{~\alpha}^{Bb}l_{b}\left(  \lambda\right)  $, linearly
independent from the $m_{B}^{zj}k_{j}$, will depend on $\lambda$.

On the other hand, since for each $k_{i}$, the rank of $C_{A}^{\Gamma
0}\mathfrak{N}_{~\alpha}^{Ai}k_{i}$ is $r\left(  k\right)  $ and since
$C_{A}^{\Gamma0}\mathfrak{N}_{~\alpha}^{Ai}k_{i}$ appears explicitly in the
first rows of (\ref{left_ker_1}), we complete our basis with the following
linearly independent $r\left(  k\right)  $ vectors%
\[
c_{B}^{wa}l_{a}\left(  \lambda\right)  :=h_{\Gamma}^{w}\left[
\begin{array}
[c]{cc}%
-C_{A}^{\Gamma0}\mathfrak{N}_{~\alpha}^{Aj}k_{j} & -\lambda\delta_{\Delta
}^{\Gamma}+\left(  C_{A}^{\Gamma j}h_{~\Delta}^{A}\right)  k_{j}%
\end{array}
\right]  \left[
\begin{array}
[c]{c}%
h_{B}^{\alpha}\\
C_{B}^{\Delta0}%
\end{array}
\right]  .
\]
Where $w=1,...,r\left(  k\right)  ,$ and $h_{\Gamma}^{w}$ is the projector
associated with our choice. It only remains to show that and $c_{B}^{wa}%
l_{a}\left(  \lambda\right)  $ are linearly independent between them. This is
shown by recalling that $h_{\Gamma}^{w}\left(  -C_{A}^{\Gamma0}\mathfrak{N}%
_{~\alpha}^{Aj}k_{j}\right)  $ has no left kernel (in the index $w$) and thus
the only $\left[
\begin{array}
[c]{cc}%
X_{w} & X_{r}%
\end{array}
\right]  $ which cancels the first $u$ columns of
\[
\left[
\begin{array}
[c]{cc}%
h_{\Gamma}^{w}\left(  -C_{A}^{\Gamma0}\mathfrak{N}_{~\alpha}^{Aj}k_{j}\right)
& h_{\Gamma}^{w}\left(  -\lambda\delta_{\Delta}^{\Gamma}+\left(  C_{A}^{\Gamma
j}h_{~\Delta}^{A}\right)  k_{j}\right) \\
0 & h_{\tilde{\Delta}}^{r}\left(  M_{\Delta}^{\tilde{\Delta}j}k_{j}\right)
\end{array}
\right]  ,
\]
is the trivial ones.

We conclude by (\ref{Eq_c_r_s_2}) that our chosen basis $\left\{  m_{B}%
^{zj}k_{j},c_{B}^{wa}l_{a}\left(  \lambda\right)  \right\}  $ has $c$ vectors,
where the $m_{B}^{zj}k_{j}$ are $s\left(  k\right)  $ vectors that do not
depend on $\lambda$ and the $c_{B}^{wa}l_{a}\left(  \lambda\right)  $ are
$r\left(  k\right)  $ vectors that depend linearly on $\lambda$. This is a
base of the left kernel of $\mathfrak{N}_{~\alpha}^{Ab}l_{b}\left(
\lambda\right)  $ valid for $\lambda$ different to the generalized eigenvalues.

\subsubsection{Kronecker decomposition of $\mathfrak{N}_{~\beta}^{Bb}%
l_{b}\left(  \lambda\right)  $ \label{Kronecker_decomposition}}

The aim of this subsubsection is to give the Kronecker decomposition of the
principal symbol (pencil matrix)%
\begin{equation}
\mathfrak{N}_{~\beta}^{Bb}l_{b}\left(  \lambda\right)  =-\lambda
\mathfrak{N}_{~\beta}^{B0}+\mathfrak{N}_{~\beta}^{Bi}k_{i}.
\label{Eq_pencil_K_l_1}%
\end{equation}
where $l_{b}\left(  \lambda\right)  =-\lambda n_{a}+k_{a}\in S_{n_{a}}$. This
decomposition will guide us in the following subsubsections to complete the
proof of the theorem.

This pencil is an $e\times u$ matrix in $%
\mathbb{C}
$, although it makes no difference if we think of its components in $%
\mathbb{R}
$. The Kronecker decomposition consists of rewriting the pencil as
\[
\mathfrak{N}_{~\beta}^{Bb}l_{b}\left(  \lambda\right)  =Y_{~B}^{A}\left(
k\right)  K_{~\alpha}^{B}\left(  \lambda,k\right)  W_{~\beta}^{\alpha}\left(
k\right)  .
\]
Where $Y_{~B}^{A}\left(  k\right)  \in%
\mathbb{C}
^{e\times e}$ and $W_{~\beta}^{\alpha}\left(  k\right)  \in%
\mathbb{C}
^{u\times u}$ are two invertible matrices that depend on $k_{i}$ and not on
$\lambda$; and $K_{~\alpha}^{B}\left(  \lambda,k\right)  \in%
\mathbb{C}
^{e\times u}$ is a block matrix (see \cite{gantmacher1992theory},
\cite{gantmakher1998theory} and \cite{Abalos:2018uwg} for details), which
depends on $\lambda$ and $k_{i}$. The matrices $Y_{~B}^{A}\left(  k\right)  $
and $W_{~\beta}^{\alpha}\left(  k\right)  $ can be thought of as
change-of-basis matrices, and $K_{~\alpha}^{B}\left(  \lambda,k\right)  $ is
the block matrix which we call the Kronecker structure.

We know from Lemmas 1 and 2 of \cite{Abalos:2018uwg} (whose hypotheses are
valid in this proof), that $K_{~\alpha}^{B}\left(  \lambda,k\right)  $ has all
their Jordan blocks of dimension 1. We also know from subsection 3.1 of
\cite{Abalos:2018uwg} that since $\mathfrak{N}_{~\beta}^{B0}$ has only trivial
right kernel, the remaining blocks appearing in this decomposition are the
$L_{m}^{T}$ and zero rows (called $L_{0}^{T}$ simplifying the notation). This
decomposition depends on $k_{i}$, i.e. the generalized eigenvectors, the
number of them, the block structure $L_{m}^{T}$ (how many and which ones) and
the number of zero rows can be modified for different $k_{i}$'s.

We say that the Kronecker structure of the $e\times u$ pencil $\mathfrak{N}%
_{~\beta}^{Bb}l_{b}\left(  \lambda\right)  $ has the following structure:
\begin{equation}
d\left(  k\right)  \times J_{1},~r\left(  k\right)  \times L_{1}^{T},~s\left(
k\right)  \times L_{0}^{T}. \label{Kro_K_1}%
\end{equation}
Where for each $k_{i}$, the quantities $d\left(  k\right)  $, $r\left(
k\right)  $, $s\left(  k\right)  $ are defined by (\ref{Eq_d_1}),
(\ref{Eq_r_1}) and (\ref{Eq_s_1}) respectively and they also satisfy the
equations (\ref{Eq_u_r_d_1}) and (\ref{Eq_c_r_s_2}). We justify each of the
terms of (\ref{Kro_K_1}) below.

The following analysis is valid for each $k_{i}$, so we assume $k_{i}$ is fixed.

$\bullet$ We begin by justifying the block structure $r\left(  k\right)
\times L_{1}^{T},~s\left(  k\right)  \times L_{0}^{T}$.

Let us study the left kernel of $L_{m}^{T}=\left[
\begin{array}
[c]{ccccc}%
\lambda & 0 & 0 & 0 & 0\\
1 & \lambda & 0 & 0 & 0\\
0 & 1 & ... & 0 & 0\\
0 & 0 & ... & \lambda & 0\\
0 & 0 & 0 & 1 & \lambda\\
0 & 0 & 0 & 0 & 1
\end{array}
\right]  \in%
\mathbb{R}
^{m+1\times m}$. It has dimension 1 and is expanded by the vector%
\[
X=\left[  -1,\lambda,...,\left(  -1\right)  ^{m}\lambda^{m-1},\left(
-1\right)  ^{m+1}\lambda^{m}\right]  \in%
\mathbb{R}
^{1\times m+1}.
\]
\ The coefficients of $X$ are polynomials in $\lambda$ whose major degree is
$m$. We can increase the degree of these coefficients, for example, by
considering the vector $\lambda X$, but we cannot reduce it without obtaining
rational functions. \ This allows us to introduce a method to detect the
$L_{m}^{T}$ blocks present in a given pencil. \ We first define the function
$gr$ which takes vectors with polynomial coefficients in $\lambda$ and returns
the greater polynomial degree between their coefficients (in our example,
$gr\left(  X\right)  =m$ and $gr\left(  \lambda X\right)  =m+1$). If we now
consider a pencil with different blocks $L_{m_{1}}^{T},...,L_{m_{v}}^{T}$ such
that $0\leq m_{1}\leq m_{2}\leq...\leq m_{v}$, there exists a left kernel
basis $\left\{  X_{i}\text{ with }i=1,...,v\right\}  $ of this pencil such
that the values of $gr\left(  X_{i}\right)  =m_{i}$ identify the types of
$L_{m}^{T}$ blocks present in the pencil. Any other basis $\left\{
Z_{i}\right\}  $, with $gr\left(  Z_{i}\right)  =z_{i}$ and ordered such that
$z_{1}\leq z_{2}\leq...\leq z_{p}$, has at least one $z_{i}$ such that
$m_{i}\leq z_{i}$ with $i$ between $1$ and $v$. \ This indicates that to find
the Kronecker structure of a pencil, we have to identify a left kernel basis
of the pencil that minimizes the $gr$ function for their elements. Notice
that, if we have a zero row $L_{0}^{T}$, its left kernel $F$ can be chosen
independent of $\lambda$, then $gr\left(  F\right)  =0$.

Applying this method to $\mathfrak{N}_{~\beta}^{Bb}l_{b}\left(  \lambda
\right)  $, using the basis $\left\{  m_{B}^{zj}k_{j}\text{, }c_{B}^{wa}%
l_{a}\left(  \lambda\right)  \right\}  $ with $z=1,...,s\left(  k\right)  $,
$w=1,...,r\left(  k\right)  $ \ from the previous subsubsection, which
satisfies that $gr\left(  m_{B}^{rj}k_{j}\right)  =0$ and $gr\left(
c_{B}^{wa}l_{a}\left(  \lambda\right)  \right)  =1$, we conclude that the
Kronecker structure of $\mathfrak{N}_{~\beta}^{Bb}l_{b}\left(  \lambda\right)
$ has the blocks $r\left(  k\right)  \times L_{1}^{T},~s\left(  k\right)
\times L_{0}^{T}$ and the rest of the structure are Jordan blocks.

$\bullet$ We have already explained that this pencil only has Jordan blocks of
dimension 1, it remains to explain that they are $d\left(  k\right)  $ blocks.

This result can be concluded by counting the number of $\lambda$'s appearing
in the columns of $K_{~\alpha}^{B}\left(  \lambda,k\right)  $. Since only one
$\lambda$ can be present per column and $K_{~\alpha}^{B}\left(  \lambda
,k\right)  $ has $u$ columns, there are $u$ $\lambda$'s in $K_{~\alpha}%
^{B}\left(  \lambda,k\right)  $. Notice that $L_{0}^{T}$ has no $\lambda$
since it is a row of zeros and that each $L_{1}^{T}$ has only one $\lambda$
per column, then in $r\left(  k\right)  \times L_{1}^{T},~s\left(  k\right)
\times L_{0}^{T}$\ there are $r\left(  k\right)  $ $\lambda$'s. Finally, the
number of Jordan blocks of dimension 1 is $u-r\left(  k\right)  =d\left(
k\right)  $ (see eq.(\ref{Eq_u_r_d_1})).

As a final comment, we note that the sum of the multiplicities of each of the
different $q\left(  k\right)  -$generalized eigenvalues has to be equal to
$d\left(  k\right)  $, that is,
\[
d_{\lambda_{1}}+d_{\lambda_{2}}+...+d_{\lambda_{q\left(  k\right)  }}=d\left(
k\right)  .
\]

\subsubsection{Basis which diagonalize $A_{~\beta}^{\alpha i}k_{i}$
\label{Eigenbasis_1}}

In this subsubsection, we use the obtained information of the Kronecker
decomposition of $\mathfrak{N}_{~\beta}^{Bb}l_{b}\left(  \lambda\right)  $ to
find the bases that diagonalize $A_{~\beta}^{\alpha i}k_{i}$.

The eigenvectors which diagonalize $A_{~\beta}^{\alpha i}k_{i}$ are divided
into two groups:

$\bullet$ The generalized eigenvectors $\delta\phi_{\lambda_{i}\left(
k\right)  }^{\beta}$, associated to the generalized eigenvalues $\lambda
_{i}\left(  k\right)  $, such that they satisfy%
\[
\mathfrak{N}_{~\beta}^{Bb}\left(  -\lambda_{i}\left(  k\right)  n_{a}%
+k_{a}\right)  \delta\phi_{\lambda_{i}\left(  k\right)  }^{\beta}=0,
\]
with $\lambda_{1}\left(  k\right)  \leq\lambda_{2}\left(  k\right)
\leq...\leq\lambda_{d\left(  k\right)  }\left(  k\right)  $. We know from the
previous subsubsection that for each $\lambda_{i}\left(  k\right)  $ there is
a generalized eigenvector $\delta\phi_{\lambda_{i}\left(  k\right)  }^{\beta}%
$. Notice that to simplify the notation, we have changed how we denote the
generalized eigenvalues with respect to the section \ref{Sec_Const_coeff_1}.

From the above equation, it follows that
\[
\left[
\begin{array}
[c]{c}%
h_{B}^{\alpha}\\
C_{B}^{\Delta0}%
\end{array}
\right]  \mathfrak{N}_{~\beta}^{Bb}\left(  -\lambda_{i}\left(  k\right)
n_{a}+k_{a}\right)  \delta\phi_{\lambda_{i}\left(  k\right)  }^{\beta}=\left[
\begin{array}
[c]{c}%
-\lambda_{i}\left(  k\right)  \delta_{\beta}^{\alpha}+A_{\beta}^{\alpha
i}k_{i}\\
C_{B}^{\Delta0}\mathfrak{N}_{~\beta}^{Bi}k_{i}%
\end{array}
\right]  \delta\phi_{\lambda_{i}\left(  k\right)  }^{\beta}=0,
\]
therefore, these generalized eigenvectors are eigenvectors of $A_{\beta
}^{\alpha i}k_{i}$ and belong to the right kernel of $C_{B}^{\Delta
0}\mathfrak{N}_{~\beta}^{Bi}k_{i}$.

Recalling that the dimension of the right kernel of \ $C_{B}^{\Delta
0}\mathfrak{N}_{~\beta}^{Bi}k_{i}$ is $d\left(  k\right)  $ (see eq.
(\ref{Eq_d_1})) and that the $\delta\phi_{\lambda_{i}\left(  k\right)
}^{\beta}$ are $d\left(  k\right)  $ linearly independent vectors, we conclude
that%
\begin{equation}
right\_\ker\left(  C_{B}^{\Delta0}\mathfrak{N}_{~\beta}^{Bi}k_{i}\right)
=span\left\langle \delta\phi_{\lambda_{i}\left(  k\right)  }^{\beta
}\right\rangle . \label{Eq_span_ker_der_C0_Ki_1}%
\end{equation}
We also recall that the set of generalized eigenvalues $\left\{  \lambda
_{i}\left(  k\right)  \right\}  $ are those we called the "physical" at the
beginning of the proof.

$\bullet$ The eigenvectors $\delta\phi_{\pi_{i}\left(  k\right)  }^{\beta}$,
associated to the eigenvalues $\pi_{i}\left(  k\right)  $, such that they
satisfy
\begin{align}
\left(  -\pi_{i}\left(  k\right)  \delta_{\beta}^{\alpha}+A_{\beta}^{\alpha
i}k_{i}\right)  \delta\phi_{\pi_{i}\left(  k\right)  }^{\beta}  &
=0,\label{eq_phi_pi_1}\\
C_{B}^{\Delta0}\mathfrak{N}_{~\beta}^{Bi}k_{i}\delta\phi_{\pi_{i}\left(
k\right)  }^{\beta}  &  \neq0.\nonumber
\end{align}

We are considering that the reduction $h_{~A}^{\beta}\left(  k\right)  $ is
chosen such that, for each $k_{i}$, the $\left\{  \pi_{i}\left(  k\right)
\right\}  $ are simple, different from each other and different from the
$\left\{  \lambda_{i}\left(  k\right)  \right\}  $. We order them in the
following way $\pi_{1}\left(  k\right)  <\pi_{2}\left(  k\right)
<...<\pi_{r\left(  k\right)  }\left(  k\right)  $ and notice that since they
are simple, each $\pi_{i}\left(  k\right)  $ has its associated eigenvector
$\delta\phi_{\pi_{i}\left(  k\right)  }^{\beta}$. Notice that $\left\{
\lambda_{i}\left(  k\right)  ,\text{ }\pi_{i}\left(  k\right)  \right\}  $ and
$\left\{  \delta\phi_{\lambda_{i}\left(  k\right)  }^{\beta},\text{ }%
\delta\phi_{\pi_{i}\left(  k\right)  }^{\beta}\right\}  $ are the sets of the
eigenvalues and eigenvectors of $A_{\beta}^{\alpha i}k_{i}$ and each of these
sets has $u=d\left(  k\right)  +r\left(  k\right)  $ elements. \ Finally, we
recall that the set of eigenvalues $\left\{  \pi_{i}\left(  k\right)
\right\}  $ was previously called the "constraints 1".

\subsubsection{Kronecker structure of the subsidiary system
(\ref{Eq_pencil_cons_1}) \label{Kro_sub}}

In this subsubsection, we show that the Kronecker structure of the pencil
(\ref{Eq_pencil_cons_1}) is
\begin{equation}
J_{1}\left(  \pi_{1}\left(  k\right)  \right)  ,...,J_{1}\left(  \pi_{r\left(
k\right)  }\left(  k\right)  \right)  ,s\left(  k\right)  \times L_{1}%
^{T},y\left(  k\right)  \times L_{0}^{T}, \label{eq_sub_kron_1}%
\end{equation}
where $y\left(  k\right)  :=\dim\left(  left\_\ker\left(  M_{\Delta}%
^{\tilde{\Delta}j}k_{j}\right)  \right)  $.

$\bullet$ We begin by showing that $J_{1}\left(  \pi_{1}\left(  k\right)
\right)  ,...,J_{1}\left(  \pi_{r\left(  k\right)  }\left(  k\right)  \right)
$ is part of the Kronecker structure.

For this purpose, we will show that the set of $\pi_{i}\left(  k\right)  $ are
the generalized eigenvalues of (\ref{Eq_pencil_cons_1}), with their
corresponding linearly independent generalized eigenvectors
\begin{equation}
\delta\psi_{\pi_{i}\left(  k\right)  }^{\Delta}:=C_{B}^{\Delta0}%
\mathfrak{N}_{~\beta}^{Bi}k_{i}\delta\phi_{\pi_{i}\left(  k\right)  }^{\beta};
\label{Eq_chi_pi_1}%
\end{equation}
i.e., for each $i=1,...,r\left(  k\right)  $, it holds that
\begin{equation}
\left[
\begin{array}
[c]{c}%
-\delta_{\Delta}^{\Gamma}\pi_{i}\left(  k\right)  +C_{A}^{\Gamma j}h_{~\Delta
}^{A}k_{j}\\
M_{\Delta}^{\tilde{\Delta}j}k_{j}%
\end{array}
\right]  \delta\psi_{\pi_{i}\left(  k\right)  }^{\Delta}=0.
\label{Eq_vin_chi_1}%
\end{equation}

First, recalling that $M_{\Delta}^{\tilde{\Delta}j}k_{j}$ expands the left
kernel of $C_{B}^{\Delta0}\mathfrak{N}_{~\beta}^{Bi}k_{i}$, we conclude that%
\[
M_{\Delta}^{\tilde{\Delta}j}k_{j}\delta\psi_{\pi_{i}\left(  k\right)
}^{\Delta}=M_{\Delta}^{\tilde{\Delta}j}k_{j}C_{B}^{\Delta0}\mathfrak{N}%
_{~\beta}^{Bi}k_{i}\delta\phi_{\pi_{i}\left(  k\right)  }^{\beta}=0.
\]

Second, evaluating the expression (\ref{Eq_simbs_1}) at $\lambda=\pi
_{i}\left(  k\right)  $, multiplying by $\delta\phi_{\pi_{i}\left(  k\right)
}^{\beta}$ and recalling the eq. (\ref{eq_phi_pi_1}),\ we obtain that%
\begin{align*}
0  &  =\left(  k_{j}C_{A}^{\Gamma0}\mathfrak{N}_{~\alpha}^{Aj}\right)  \left(
-\pi_{i}\left(  k\right)  \delta_{\alpha}^{\beta}+A_{\alpha}^{\beta i}%
k_{i}\right)  \delta\phi_{\pi_{i}\left(  k\right)  }^{\alpha},\\
&  =\left(  -\pi_{i}\left(  k\right)  \delta_{\Delta}^{\Gamma}+C_{A}^{\Gamma
j}h_{~\Delta}^{A}k_{j}\right)  \left(  C_{B}^{\Delta0}\mathfrak{N}_{~\alpha
}^{Bj}k_{j}\right)  \delta\phi_{\pi_{i}\left(  k\right)  }^{\alpha},\\
&  =\left(  -\pi_{i}\left(  k\right)  \delta_{\Delta}^{\Gamma}+C_{A}^{\Gamma
j}h_{~\Delta}^{A}k_{j}\right)  \delta\psi_{\pi_{i}\left(  k\right)  }^{\Delta
}.
\end{align*}
This shows that equations (\ref{Eq_vin_chi_1}) holds. Let us now demonstrate
that the $\delta\psi_{\pi_{i}\left(  k\right)  }^{\Delta}$ are linearly
independent. For this purpose, we assume that they are not, i.e. that there
exists $U^{\pi_{i}\left(  k\right)  }$ such that $0=U^{\pi_{i}\left(
k\right)  }\delta\psi_{\pi_{i}\left(  k\right)  }^{\Delta}$ (the sum runs into
the $i$ index) and we conclude that $U^{\pi_{i}\left(  k\right)  }=0$. We note
that%
\[
0=U^{\pi_{i}\left(  k\right)  }\delta\psi_{\pi_{i}\left(  k\right)  }^{\Delta
}=U^{\pi_{i}\left(  k\right)  }C_{B}^{\Delta0}\mathfrak{N}_{~\beta}^{Bi}%
k_{i}\delta\phi_{\pi_{i}\left(  k\right)  }^{\beta}=C_{B}^{\Delta
0}\mathfrak{N}_{~\beta}^{Bi}k_{i}\left(  U^{\pi_{i}\left(  k\right)  }%
\delta\phi_{\pi_{i}\left(  k\right)  }^{\beta}\right)  ,
\]
thus $U^{\pi_{i}\left(  k\right)  }\delta\phi_{\pi_{i}\left(  k\right)
}^{\beta}$ belongs to the right-hand kernel of $C_{B}^{\Delta0}\mathfrak{N}%
_{~\beta}^{Bi}k_{i}$ and, by (\ref{Eq_span_ker_der_C0_Ki_1}), should be a
linear combination of the $\left\{  \delta\phi_{\lambda_{i}\left(  k\right)
}^{\beta}\right\}  $. Since these $\left\{  \delta\phi_{\lambda_{i}\left(
k\right)  }^{\beta}\right\}  $ are linearly independent of the $\left\{
\delta\phi_{\pi_{i}\left(  k\right)  }^{\beta}\right\}  $ by construction, it
should be that $U^{\pi_{i}\left(  k\right)  }=0$.

$\bullet$ \ It remains to show that $s\left(  k\right)  \times L_{1}%
^{T},y\left(  k\right)  \times L_{0}^{T}$ is the other part of the Kronecker
structure. This can be concluded directly from lemma \ref{Lemma_2} (in
appendix \ref{Ap_lemmas}) if we prove the condition (\ref{eq_d1_d2_d_1}). This
condition follows from recalling that
\[
rigth\_\ker\left(  M_{\Delta}^{\tilde{\Delta}j}k_{j}\right)  =\left\langle
\delta\psi_{\pi_{i}\left(  k\right)  }^{\Delta}\right\rangle .
\]
Furthermore, we conclude that the pairs $\left\{  \pi_{i}\left(  k\right)
\right\}  $,$\left\{  \delta\psi_{\pi_{i}\left(  k\right)  }^{\Delta}\right\}
$ are all the generalized eigenvalues and eigenvectors of the pencil.

Notice that we called the "constraints 1" to the eigenvalues $\left\{  \pi
_{i}\left(  k\right)  \right\}  $, they are at the same time the generalized
eigenvalues of the system (\ref{Eq_pencil_cons_1}), i.e. they are the
"constraints 1" eigenvalues of $A_{\beta}^{\alpha i}k_{i}$ and the "physic"
eigenvalues for the pencil (\ref{Eq_pencil_cons_1}).

\subsubsection{Basis which diagonalize $B_{~\Delta}^{\Gamma j}k_{j}$
\ \label{Diag_sub}}

The previous subsubsection is the proof of a) and b) (stated at the beginning
of this proof). Therefore, there exists $N_{\tilde{\Delta}}^{\Gamma}$ such
that $B_{~\Delta}^{\Gamma j}k_{j}=C_{A}^{\Gamma j}h_{~\Delta}^{A}%
k_{j}+N_{\tilde{\Delta}}^{\Gamma}M_{\Delta}^{\tilde{\Delta}j}k_{j}$ is
uniformly diagonalizable. This concludes the proof of the theorem in the case
where $M_{\Delta}^{\tilde{\Delta}j}$ is non zero.

In this subsubsection, we will find the bases that diagonalize $B_{~\Delta
}^{\Gamma j}k_{j}$. We will use these bases with the bases that diagonalize
$A_{\beta}^{\alpha j}k_{j}$ to give a very simple expression of the equation
(\ref{Eq_rel_simbolos_1}). Finally, we will discuss the structure of the
latter equation in the cases where $A_{\beta}^{\alpha j}k_{j}$ and/or
$B_{~\Delta}^{\Gamma j}k_{j}$ are not diagonalizable.

Analogously to the case of $A_{\beta}^{\alpha i}k_{i}$, the eigenvectors which
diagonalize $B_{~\Delta}^{\Gamma j}k_{j}$ are also divided into two groups for
each $k_{i}$:

$\bullet$ The eigenvectors $\left\{  \delta\psi_{\pi_{i}\left(  k\right)
}^{\Delta}\right\}  $ with their corresponding eigenvalues $\pi_{i}\left(
k\right)  $, with $i=1,...,r\left(  k\right)  $, founded in the previous
subsubsection (see (\ref{Eq_chi_pi_1})) and such that they satisfy%
\[
\left(  -\delta_{\Delta}^{\Gamma}\pi_{i}\left(  k\right)  +C_{A}^{\Gamma
j}h_{~\Delta}^{A}k_{j}+N_{\tilde{\Delta}}^{\Gamma}M_{\Delta}^{\tilde{\Delta}%
j}k_{j}\right)  \delta\psi_{\pi_{i}\left(  k\right)  }^{\Delta}=0.
\]

$\bullet$ And the eigenvectors $\delta\psi_{\rho_{i}\left(  k\right)
}^{\Delta}$ associated to the eigenvalues $\rho_{i}\left(  k\right)  $, with
$i=1,..,s\left(  k\right)  $, such that%

\begin{align*}
\left(  -\delta_{\Delta}^{\Gamma}\rho_{i}\left(  k\right)  +C_{A}^{\Gamma
j}h_{~\Delta}^{A}k_{j}+N_{\tilde{\Delta}}^{\Gamma}M_{\Delta}^{\tilde{\Delta}%
j}k_{j}\right)  \delta\psi_{\rho_{i}\left(  k\right)  }^{\Delta}  &  =0\\
M_{\Delta}^{\tilde{\Delta}j}k_{j}\delta\psi_{\rho_{i}\left(  k\right)
}^{\Delta}  &  \neq0.
\end{align*}
Where the set $\left\{  \rho_{i}\left(  k\right)  \right\}  $ are simple,
different from each other and different from the $\left\{  \pi_{i}\left(
k\right)  \right\}  $. These $\left\{  \rho_{i}\left(  k\right)  \right\}  $
was called "constraints 2".

The set $\left\{  \delta\psi_{\pi_{i}\left(  k\right)  }^{\Delta}\text{,
}\delta\psi_{\rho_{i}\left(  k\right)  }^{\Delta}\right\}  $ uniformly
diagonalizes $B_{~\Delta}^{\Gamma j}k_{j}$ (since, for each $k_{i}$, all its
eigenvalues $\left\{  \pi_{i}\left(  k\right)  ,\rho_{i}\left(  k\right)
\right\}  $ are simple) showing that the system (\ref{Eq_sist_B_1}) is
strongly hyperbolic for this choice of $N_{\tilde{\Delta}}^{\Gamma}$.

Let us now rewrite the equation (\ref{Eq_rel_simbolos_1}) in the founded bases.

$\bullet$ For each $k_{i}$, we consider the co-basis $\left\{  \delta
\phi_{\beta}^{\pi_{i}\left(  k\right)  }\text{, }\delta\phi_{\beta}%
^{\lambda_{i}\left(  k\right)  }\right\}  $ and $\left\{  \delta\psi_{\Delta
}^{\pi_{i}\left(  k\right)  }\text{, }\delta\psi_{\Delta}^{\rho_{i}\left(
k\right)  }\right\}  $ of the bases $\left\{  \delta\phi_{\pi_{i}\left(
k\right)  }^{\beta}\text{, }\delta\phi_{\lambda_{i}\left(  k\right)  }^{\beta
}\right\}  $and $\left\{  \delta\psi_{\pi_{i}\left(  k\right)  }^{\Delta
}\text{, }\delta\psi_{\rho_{i}\left(  k\right)  }^{\Delta}\right\}  $
respectively. They satisfy
\begin{align*}
&
\begin{array}
[c]{ccc}%
\delta\phi_{\beta}^{\lambda_{i}\left(  k\right)  }\delta\phi_{\lambda
_{j}\left(  k\right)  }^{\beta}=\delta_{j}^{i} &  & \delta\phi_{\beta}%
^{\pi_{i}\left(  k\right)  }\delta\phi_{\lambda_{j}\left(  k\right)  }^{\beta
}=0
\end{array}
\\
&
\begin{array}
[c]{ccc}%
\delta\phi_{\beta}^{\lambda_{i}\left(  k\right)  }\delta\phi_{\pi_{j}\left(
k\right)  }^{\beta}=0 &  & \delta\phi_{\beta}^{\pi_{i}\left(  k\right)
}\delta\phi_{\pi_{j}\left(  k\right)  }^{\beta}=\delta_{j}^{i}%
\end{array}
\end{align*}%
\begin{align*}
&
\begin{array}
[c]{ccc}%
\delta\psi_{\Delta}^{\pi_{i}\left(  k\right)  }\delta\psi_{\pi_{j}\left(
k\right)  }^{\Delta}=\delta_{j}^{i} &  & \delta\psi_{\Delta}^{\rho_{i}\left(
k\right)  }\delta\psi_{\pi_{j}\left(  k\right)  }^{\Delta}=0
\end{array}
\\
&
\begin{array}
[c]{ccc}%
\delta\psi_{\Delta}^{\pi_{i}\left(  k\right)  }\delta\psi_{\rho_{j}\left(
k\right)  }^{\Delta}=0 &  & \delta\psi_{\Delta}^{\rho_{i}\left(  k\right)
}\delta\psi_{\rho_{j}\left(  k\right)  }^{\Delta}=\delta_{j}^{i}%
\end{array}
\end{align*}

In these bases, the equation (\ref{Eq_rel_simbolos_1}) reduces to
\begin{equation}
\left[
\begin{array}
[c]{cc}%
I & 0\\
0 & 0
\end{array}
\right]  \left[
\begin{array}
[c]{cc}%
\Pi & 0\\
0 & \Lambda
\end{array}
\right]  =\left[
\begin{array}
[c]{cc}%
\Pi & 0\\
0 & \Theta
\end{array}
\right]  \left[
\begin{array}
[c]{cc}%
I & 0\\
0 & 0
\end{array}
\right]  \label{Eq_bloques_1}%
\end{equation}
where \newline
$I=\left[
\begin{array}
[c]{ccc}%
1 & 0 & 0\\
0 & ... & 0\\
0 & 0 & 1
\end{array}
\right]  \in%
\mathbb{R}
^{r\left(  k\right)  \times r\left(  k\right)  }$, $\Pi=-\left[
\begin{array}
[c]{ccc}%
\lambda-\pi_{1}\left(  k\right)  & 0 & 0\\
0 & ... & 0\\
0 & 0 & \lambda-\pi_{r}\left(  k\right)
\end{array}
\right]  \in%
\mathbb{R}
^{r\left(  k\right)  \times r\left(  k\right)  }$, $\Lambda=-\left[
\begin{array}
[c]{ccc}%
\lambda-\lambda_{1}\left(  k\right)  & 0 & 0\\
0 & ... & 0\\
0 & 0 & \lambda-\lambda_{d}\left(  k\right)
\end{array}
\right]  \in%
\mathbb{R}
^{d\left(  k\right)  \times d\left(  k\right)  }$ \ and \newline $\ \Theta=-\left[
\begin{array}
[c]{ccc}%
\lambda-\rho_{1}\left(  k\right)  & 0 & 0\\
0 & ... & 0\\
0 & 0 & \lambda-\rho_{s}\left(  k\right)
\end{array}
\right]  \in%
\mathbb{R}
^{s\left(  k\right)  \times s\left(  k\right)  }$.

If we consider the not strongly hyperbolic case, where $\Lambda$ has at least
one Jordan block $J_{m}$ with $m\geq2$. Eq. (\ref{Eq_bloques_1}) implies that
the matrix $\left[
\begin{array}
[c]{cc}%
\Pi & 0\\
0 & \Theta
\end{array}
\right]  $ could still be diagonalizable and hence the subsidiary system would
be strongly hyperbolic.

On the other hand, in the case where $\Lambda$ shares some eigenvalue with
$\Pi$ forming a Jordan block, for example,%
\[
-\left[
\begin{array}
[c]{cc}%
\Pi & 0\\
0 & \Lambda
\end{array}
\right]  =\left[
\begin{array}
[c]{cccccc}%
\lambda-\pi_{1}\left(  k\right)  & 0 & 0 & 0 & 0 & 0\\
0 & ... & 0 & 0 & 0 & 0\\
0 & 0 & \lambda-\lambda_{1}\left(  k\right)  & 0 & 0 & 0\\
0 & 0 & 1 & \lambda-\lambda_{1}\left(  k\right)  & 0 & 0\\
0 & 0 & 0 & 0 & ... & 0\\
0 & 0 & 0 & 0 & 0 & \lambda-\lambda_{d}\left(  k\right)
\end{array}
\right]  ,
\]
by the eq. (\ref{Eq_bloques_1}), it could still happen that the subsidiary
system is diagonalizable.

We concluded that by proposing different options for \ $\left[
\begin{array}
[c]{cc}%
\Pi & 0\\
0 & \Lambda
\end{array}
\right]  $ and $\left[
\begin{array}
[c]{cc}%
\Pi & 0\\
0 & \Theta
\end{array}
\right]  $ such that they satisfy (\ref{Eq_bloques_1}) we obtain all possible
cases where the evolution equations and the subsidiary system are well-posed
or not.

\subsubsection{Case without $M_{\Delta}^{\tilde{\Delta}j}$}

Let us now study the case where the system (\ref{Eq_sis_1_pseudo_1}) does not
admit $M_{\Delta}^{\tilde{\Delta}j}$, i.e., by condition v), $C_{A}^{\Gamma
0}\mathfrak{N}_{~\alpha}^{Ai}k_{i}$ only has trivial left kernel for any
$k_{i}$. In this case, $s\left(  k\right)  =0$ and $c=r\left(  k\right)  $ by
equation (\ref{Eq_c_r_s_2}).

The proof of the theorem is the same as presented above, but now we conclude
that the set $\left\{  \delta\psi_{\pi_{i}\left(  k\right)  }^{\Delta}%
:=C_{B}^{\Delta0}\mathfrak{N}_{~\beta}^{Bi}k_{i}\delta\phi_{\pi_{i}\left(
k\right)  }^{\beta}\right\}  $ uniformly diagonalizes the matrix $B_{~\Delta
}^{\Gamma j}k_{j}:=C_{A}^{\Gamma j}h_{~\Delta}^{A}k_{j}$. Therefore, system
(\ref{Eq_sist_B_1}) is strongly hyperbolic. This concludes the proof of the theorem.

\subsection{Comment about condition v) in theorem SH of the SS \label{teo_3}}

The condition v) of the theorem can be suppressed without losing the strong
hyperbolicity of the subsidiary system in the case of constant coefficients.
For this purpose, we have to change the pencils%
\[
\left[
\begin{array}
[c]{c}%
-\lambda\delta_{\Delta}^{\Gamma}+\left(  C_{A}^{\Gamma j}h_{~\Delta}%
^{A}\right)  k_{j}\\
M_{\Delta}^{\tilde{\Delta}j}k_{j}%
\end{array}
\right]  \rightarrow\left[
\begin{array}
[c]{cc}%
-C_{A}^{\Gamma0}\mathfrak{N}_{~\alpha}^{Aj}k_{j} & -\lambda\delta_{\Delta
}^{\Gamma}+C_{A}^{\Gamma j}h_{~\Delta}^{A}k_{j}\\
0 & M_{\Delta}^{\tilde{\Delta}j}k_{j}\\
0 & X_{\Delta}^{\check{s}}\left(  k\right)
\end{array}
\right]  ,
\]
as explained in subsubsection \ref{Subsi_Fourier} and carry on the same proof
as before but with this new pencil. This change is introduced since
$M_{\Delta}^{\tilde{\Delta}j}k_{j}$ no longer spans the $left\_\ker\left(
C_{B}^{\Delta0}\mathfrak{N}_{~\beta}^{Bi}k_{i}\right)  $, so it is necessary
to add the vectors $X_{\Delta}^{\check{s}}\left(  k\right)  $ such that, for
each $k_{i}$, it holds that
\[
span\left\langle M_{\Delta}^{\tilde{\Delta}j}k_{j},X_{\Delta}^{\check{s}%
}\left(  k\right)  \right\rangle =left\_\ker\left(  C_{B}^{\Delta
0}\mathfrak{N}_{~\beta}^{Bi}k_{i}\right)  .
\]

The cost of this change is reflected in the principal symbol of the subsidiary
system, whose new form is%
\begin{equation}
B_{~\Delta}^{\Gamma j}k_{j}=C_{A}^{\Gamma j}h_{~\Delta}^{A}k_{j}%
+N_{1\tilde{\Delta}}^{\Gamma}M_{\Delta}^{\tilde{\Delta}j}k_{j}+N_{2\check{s}%
}^{\Gamma}X_{\Delta}^{\check{s}}\left(  k\right)  . \label{eq_B_new_1}%
\end{equation}

This change the statement of the theorem.

\begin{theorem}
When all the hypotheses of theorem \ref{Theorem_coef_const_2} are satisfied
except condition v), it is possible to find $N_{1\tilde{\Delta}}^{\Gamma}$ and
$N_{2\check{s}}^{\Gamma}$ such that the principal symbol of the subsidiary
system eq. (\ref{eq_B_new_1}) is uniformly diagonalizable and therefore the
subsidiary system is strongly hyperbolic.
\end{theorem}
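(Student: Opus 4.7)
The plan is to repeat the proof strategy of Theorem \ref{Theorem_coef_const_2} with the augmented pencil (\ref{Eq_sin_v_1}) replacing (\ref{Eq_pencil_cons_1}). Concretely, I would absorb the two reduction fields $N_{1\tilde{\Delta}}^{\Gamma}$ and $N_{2\check{s}}^{\Gamma}$ into a single block reduction $\bar{N}_{\bar{\Delta}}^{\Gamma}(k)$ acting on a combined object $\bar{M}_{\Delta}^{\bar{\Delta}j}(k)\,k_j$ whose rows list both $M_{\Delta}^{\tilde{\Delta}j}k_j$ and $X_{\Delta}^{\check{s}}(k)$, and which by construction spans the full left kernel of $C_{B}^{\Delta 0}\mathfrak{N}_{~\alpha}^{Aj}k_j$ for every $k$. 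The new principal symbol (\ref{eq_B_new_1}) then takes exactly the shape $B_{~\Delta}^{\Gamma j}k_j = C_A^{\Gamma j}h_{~\Delta}^A k_j + \bar{N}_{\bar{\Delta}}^{\Gamma}(k)\,\bar{M}_{\Delta}^{\bar{\Delta}j}(k)\,k_j$ that was analyzed in the original proof, with $\bar{M}$ playing the role of $M$.

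Next, I would verify that every algebraic step of subsections \ref{Subsi_Fourier}--\ref{Diag_sub} goes through unchanged. The role of condition v) in the previous proof was precisely to guarantee that $M_{\Delta}^{\tilde{\Delta}j}k_j$ spanned the whole $\mathrm{left\_ker}(C_A^{\Gamma 0}\mathfrak{N}_{~\alpha}^{Aj}k_j)$; with the extended object $\bar{M}$, this property holds by definition. The identity (\ref{Eq_M_C0_K_2}) becomes trivial row-by-row, and the fundamental relation (\ref{Eq_simbs_1}) between the evolution-equation symbol and the subsidiary pencil survives with $\bar{M}$ in place of $M$. Consequently, the Kronecker decomposition computed in subsection \ref{Kro_sub} retains the same structure: the "constraints 1" eigenvalues $\pi_i(k)$ reappear as simple generalized eigenvalues $J_1(\pi_i(k))$ of the augmented pencil, with eigenvectors $\delta\psi_{\pi_i(k)}^{\Delta} = C_B^{\Delta 0}\mathfrak{N}_{~\beta}^{Bj}k_j\,\delta\phi_{\pi_i(k)}^{\beta}$, and the remaining blocks are $L_1^T$ and $L_0^T$ which absorb the extra $X_{\Delta}^{\check{s}}(k)$ rows. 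An application of Theorem \ref{Theor_FyL_2} to the resulting pseudo-differential system then yields a hyperbolization $\bar{N}_{\bar{\Delta}}^{\Gamma}(k)$, which decomposes along the block structure of $\bar{M}$ into the desired $N_{1\tilde{\Delta}}^{\Gamma}(k)$ and $N_{2\check{s}}^{\Gamma}(k)$.

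The main obstacle is the uniform-boundedness clause (\ref{Eq_T_T_C_1}) of Definition \ref{def_hyp_fuerte_1}. The $X_{\Delta}^{\check{s}}(k)$ are only required on those directions where $\mathrm{span}\langle M_{\Delta}^{\tilde{\Delta}j}k_j\rangle$ is a proper subspace of the full left kernel, so both the number of these extra generators and their direction can depend on $k$ in a possibly discontinuous way as $k$ varies on the unit sphere. Therefore the canonical-angle bound (\ref{Eq_cota_ang_can_1}) that Theorem \ref{Theor_FyL_2} requires for uniform diagonalizability must be reestablished uniformly in $k$ in the presence of these jumping vectors. I would address this by selecting the $X_{\Delta}^{\check{s}}(k)$ within a continuous-in-$k$ family wherever the dimension of the left kernel is locally constant, and by exploiting compactness of the unit sphere on the exceptional strata to extract a uniform lower bound on the canonical angles. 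Once this uniformity is established, the existence of the hyperbolization $(N_{1\tilde{\Delta}}^{\Gamma}, N_{2\check{s}}^{\Gamma})$ producing a uniformly diagonalizable $B_{~\Delta}^{\Gamma j}k_j$ follows immediately from Theorem \ref{Theor_FyL_2}, completing the proof.
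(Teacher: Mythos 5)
Your proposal takes essentially the same route as the paper: augment the pencil (\ref{Eq_pencil_cons_1}) with the extra rows $X_{\Delta}^{\check{s}}(k)$ that complete the left kernel of $C_{B}^{\Delta 0}\mathfrak{N}_{~\alpha}^{Ai}k_i$, bundle $(M,X)$ into one object, and rerun the Kronecker analysis of subsubsections \ref{Subsi_Fourier}--\ref{Diag_sub}. The paper simply asserts that the earlier proof "carries on" with the enlarged pencil; your worry that the $X_{\Delta}^{\check{s}}(k)$ may jump with $k$ and threaten the uniform bound (\ref{Eq_T_T_C_1}) is a legitimate point the paper does not engage with beyond flagging that the resulting $N_{2\check{s}}^{\Gamma}X_{\Delta}^{\check{s}}(k)$ terms may make the system purely pseudo-differential. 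Your compactness/stratification sketch is a reasonable way to make this precise, and it helps to observe (which you could state explicitly) that since each $X_{\Delta}^{\check{s}}(k)$ annihilates the image of $C_{B}^{\Delta 0}\mathfrak{N}_{~\beta}^{Bi}k_i$, the generalized eigenvectors $\delta\psi_{\pi_i(k)}^{\Delta}=C_{B}^{\Delta 0}\mathfrak{N}_{~\beta}^{Bi}k_i\,\delta\phi_{\pi_i(k)}^{\beta}$ automatically lie in the right kernel of every added row, so the Jordan blocks $J_1(\pi_i(k))$ are untouched and the potential non-uniformity localizes to the $L_m^{T}$ and $L_0^{T}$ part exactly as you describe.
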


Since the $X_{\Delta}^{\check{s}}\left(  k\right)  $ do not come from Geroch
fields, they could be non-zero only for some particular $k_{i}$ or they could
have a non-linear dependence on $k_{i}$. This latter case would imply that the
Fourier anti-transform of the term $N_{2\check{s}}^{\Gamma}X_{\Delta}%
^{\check{s}}\left(  k\right)  $ include second or higher derivatives for any
$N_{2\check{s}}^{\Gamma}$ (except for $N_{2\check{s}}^{\Gamma}=0$). This
changes the final hyperbolic answer of the subsidiary system into a pure
pseudo-differential answer, which can not be directly extrapolated to the
quasi-linear case. Some known systems admit vectors $X_{\Delta}^{\check{s}%
}\left(  k\right)  $, but in general, it is possible to set $N_{2\check{s}%
}^{\Gamma}=0$ and find some $N_{1\tilde{\Delta}}^{\Gamma}$ such that
(\ref{eq_B_new_1}) is uniformly diagonalizable.

\section{Examples \label{Examples}}

In this section, we reproduce some known results about the constraint
propagations of two specific theories: Maxwell electrodynamics and the wave
equation. We use these theories to illustrate the results presented in the
previous sections.

We consider the systems on a space-time $M$ of $\dim M=3+1$ with a background
Lorentzian metric $g_{ab}$ \ (with signature $-,+,+,+$) and with their
equations in first-order derivatives (i.e. eqs. (\ref{eq_max_or_1}%
-\ref{eq_max_or_2}) and (\ref{Eq_W_E_1})).

We show that in the Maxwell case there are no $M_{A}^{\Delta a}$ fields and
the $C_{A}^{\Gamma a}$ fields are associated with the standard constraints
$\psi_{1}:=D_{a}E^{a}-\tilde{J}^{0}$ and $\psi_{2}=D_{a}B^{a}$. We present its
subsidiary equations and comment on its characteristic analysis. We also note
that it is commonly used in the literature that when the equations are coupled
to a source $J^{a}$, this has to satisfy an on-shell integrability condition
$\nabla_{a}J^{a}=0$ to preserve the constraints. However, we will show that
this condition is relaxed in the off-shell case, by choosing the divergence
proportional to the equations of the system (i.e. \ref{eq_max_id_dJ}) and
maintaining the preservation of the constraints.

On the other hand, for the wave equation, we find both $M_{A}^{\tilde{\Delta
}a}$ and $C_{A}^{\Gamma a}$ fields. They appear as a result of reducing the
system from second to a first-order derivative. As we explained in theorem
\ref{teorema_ec_ev_vin_1}, the non-uniqueness of the subsidiary system is
associated with the presence of $M_{A}^{\tilde{\Delta}a}$. Therefore, we
verify this non-uniqueness and comment on its characteristic analysis.

In both cases, we introduce (as in subsection \ref{n+1_decomposition_sec}) a
foliation of $M=\underset{t\in%
\mathbb{R}
}{\cup}\Sigma_{t}$ associated to the function $t:M\rightarrow%
\mathbb{R}
$, with the spatial (with respect to the metric $g_{ab}$) hypersurfaces
$\Sigma_{t}$. In addition, following appendix \ref{App_coordenadas}, we
consider the definitions%
\begin{align*}
n_{a}  &  :=\nabla_{a}t,\\
\tilde{n}_{b}  &  :=-Nn_{b}\text{\ \ \ with \ }N:=\frac{1}{\sqrt{-\nabla
t.\nabla t}},\\
p^{a}  &  =\left(  \partial_{t}\right)  ^{a}-\beta^{a}\text{ \ \ with }\left(
\partial_{t}\right)  ^{a}n_{a}=1\text{ and }\beta^{a}n_{a}=0\\
\tilde{m}^{a}  &  :=\tilde{n}^{a}=\frac{1}{N}p^{a},\text{ }\\
\tilde{n}^{a}\tilde{n}_{a}  &  =-1,
\end{align*}
and the expression (\ref{Eq_met_1}) for the metric $g_{ab}$. We also consider
the projection $\tilde{\eta}_{b}^{a}$ to the hypersurfaces $\Sigma_{t}$ as%
\[
\tilde{\eta}_{b}^{a}:=\delta_{b}^{a}-p^{a}n_{b}=\delta_{b}^{a}+\tilde{n}%
^{a}\tilde{n}_{b}.
\]

\subsection{Maxwell electrodynamics\label{S_Maxwell_eq}}

We define the fields $Q_{1,2}^{d}$ as
\begin{align*}
Q_{1}^{d}  &  :=\nabla_{a}F^{ad}-J^{d},\\
Q_{2}^{d}  &  :=\nabla_{a}\ast F^{ad},
\end{align*}
where $F^{ad}$ is the electromagnetic (antisymmetric) tensor, $\ast
F^{ad}:=\frac{1}{2}\varepsilon_{cq}^{~\ \ \ ad}F^{cq}$ and $J^{d}$ is the
source of the system. Here, $J^{d}=J^{d}\left(  F^{ad},\ast F^{ad}%
,x^{a}\right)  $ may depend on $F^{ad}$, $\ast F^{ad}$ and $x^{a}\in M$ but it
can not depend on derivatives of $F^{ad}$ or $\ast F^{ad}$. In addition, we
assume that it\ satisfies the off-shell identity
\begin{equation}
\nabla_{d}J^{d}=L_{1d}Q_{1}^{d}+L_{2d}Q_{2}^{d}, \label{eq_max_id_dJ}%
\end{equation}
As $J^{d}$, the fields $L_{1,2d}\left(  F^{ad},\ast F^{ad},x^{a}\right)  $ do
not depend on derivatives of $F^{ad}$ or $\ast F^{ad}$.

We notice two extra off-shell identities%
\[
\nabla_{d}\nabla_{a}F^{ad}=0=\nabla_{d}\nabla_{a}\ast F^{ad},
\]
(both easy to verify). These expressions, in addition with (\ref{eq_max_id_dJ}%
),\ give the off-shell identities
\begin{align}
\nabla_{d}Q_{1}^{d}  &  =-L_{1d}Q_{1}^{d}-L_{2d}Q_{2}^{d}%
,\label{eq_max_id_dQ1}\\
\nabla_{d}Q_{2}^{d}  &  =0. \label{eq_max_id_dQ2}%
\end{align}
Multiplying by $-N$ and $N$, these equations can be rewritten as
\begin{equation}
\nabla_{d}\left(  \left[
\begin{array}
[c]{c}%
-NQ_{1}^{d}\\
NQ_{2}^{d}%
\end{array}
\right]  \right)  +\left[
\begin{array}
[c]{cc}%
\nabla_{d}\left(  N\right)  +NL_{1d} & NL_{2d}\\
0 & -\nabla_{d}\left(  N\right)
\end{array}
\right]  \left[
\begin{array}
[c]{c}%
Q_{1}^{d}\\
Q_{2}^{d}%
\end{array}
\right]  =0 \label{eq_max_id_mat_1}%
\end{equation}
These latter equations are the integrability conditions (\ref{eq_int_LE_1})
from which the evolution equations of the constraints are obtained as we
explain below.

The Maxwell equations are defined by $Q_{1,2}^{d}$ as%

\begin{align}
Q_{1}^{d}  &  =\nabla_{a}F^{ad}-J^{d}=0,\label{eq_max_or_1}\\
Q_{2}^{d}  &  =\nabla_{a}\ast F^{ad}=0. \label{eq_max_or_2}%
\end{align}
We will use the electric $E^{c}$ and magnetic $B_{e}$ fields as the variables
of the system. For this purpose, we begin by rewriting $F^{cq}$ in terms of
$E^{c}$ and $B_{e}$
\[
F^{cq}=\tilde{m}^{c}E^{q}-\tilde{m}^{q}E^{c}+\varepsilon^{cqde}\tilde{n}%
_{d}B_{e}.
\]
These fields are defined by
\[
E^{c}:=F^{cq}\tilde{n}_{q}\text{, \ \ }B_{d}:=\ast F_{ld}\tilde{m}^{l}%
\]
and they belong to the tangent of $\Sigma_{t}$, since
\[
\tilde{n}_{d}E^{d}=0=\tilde{m}^{e}B_{e}.
\]
On the other hand, the dual $\ast F_{lm}$ can be written as
\[
\ast F_{lm}=-\tilde{n}_{l}B_{m}+\tilde{n}_{m}B_{l}+\varepsilon_{cqlm}\tilde
{m}^{c}E^{q}.
\]

Following the same steps as in appendix \ref{App_coordenadas} and the
definitions (\ref{Eq_apendix_T_3}-\ref{Eq_apendix_T_8}), we rewrite
$Q_{1,2}^{d}$ as%
\begin{align}
Q_{1}^{d}  &  =\tilde{\eta}_{l}^{d}\frac{1}{N}\tilde{e}_{1}^{l}-\tilde{m}%
^{d}\psi_{1},\label{eq_Q1_E_B_1}\\
Q_{2}^{d}  &  =-\tilde{\eta}_{l}^{d}\frac{1}{N}\tilde{e}_{2}^{l}+\tilde{m}%
^{d}\psi_{2}, \label{eq_Q2_E_B_1}%
\end{align}
where
\begin{align}
\tilde{J}^{l}  &  :=\tilde{\eta}_{c}^{l}J^{c},\text{ \ \ \ \ }\tilde{J}%
^{0}=\tilde{n}_{c}J^{c},\label{eq_J_1}\\
\tilde{e}_{1}^{l}  &  :=%
\mathcal{L}%
_{p}E^{l}+\varepsilon^{clde}\tilde{n}_{d}D_{c}\left(  NB_{e}\right)
-NKE^{l}-N\tilde{J}^{l},\label{eq_e1_1}\\
\tilde{e}_{2}^{l}  &  :=%
\mathcal{L}%
_{p}B^{l}-\varepsilon^{clde}\tilde{n}_{d}D_{c}\left(  NE_{e}\right)
-NKB^{l},\label{eq_e1_2}\\
\psi_{1}  &  =D_{a}E^{a}-\tilde{J}^{0},\label{eq_phi1_1}\\
\psi_{2}  &  =D_{a}B^{a}. \label{eq_phi2_1}%
\end{align}
Here, the expressions (\ref{eq_J_1}-\ref{eq_phi2_1}) are quantities over
$\Sigma_{t}$ by definition, which means that $\tilde{\eta}_{l}^{d}\tilde
{e}_{1,2}^{l}=\tilde{e}_{1,2}^{d}$ and $\tilde{\eta}_{l}^{d}\tilde{J}%
^{l}=\tilde{J}^{d}$.

Notice that $\tilde{e}_{1,2}^{d}$ and $\psi_{1,2}$ are the evolution and the
constraints equations\ of the system respectively. $\psi_{1,2}$ act as
constraints since they have not derivatives in the $\partial_{t}$ direction.
We will see below that these constraints are preserved in the evolutions
$\tilde{e}_{1,2}^{l}=0$ when the initial data $\left.  \phi\right\vert
_{\Sigma_{0}}$ is chosen such that $\left.  \psi_{1,2}\right\vert _{\Sigma
_{0}}=0$.

Using the expressions (\ref{eq_Q1_E_B_1}, \ref{eq_Q2_E_B_1}, \ref{eq_e1_1}%
-\ref{eq_phi2_1}), we conclude that the principal symbol of the system is%
\begin{align}
\left[
\begin{array}
[c]{c}%
Q_{1}^{s}\\
Q_{2}^{s}%
\end{array}
\right]   &  \approx\mathfrak{N}_{~\beta}^{Bq}\nabla_{q}\phi^{\beta
},\nonumber\\
&  =\left[
\begin{array}
[c]{cc}%
2\delta_{b}^{[s}\tilde{m}^{q]} & \varepsilon_{~\ \ \ \ b}^{qsd}\tilde{n}_{d}\\
\varepsilon_{~\ \ \ \ b}^{qsd}\tilde{n}_{d} & -2\delta_{b}^{[s}\tilde{m}^{q]}%
\end{array}
\right]  \nabla_{q}\left[
\begin{array}
[c]{c}%
E^{b}\\
B^{b}%
\end{array}
\right]  . \label{eq_max_K_1}%
\end{align}
We are considering
\[%
\mathcal{L}%
_{p}E^{r}\approx\tilde{\eta}_{b}^{r}p^{q}\nabla_{q}E^{b},
\]
neglecting lower order terms.

Associated with this system, we have the Geroch fields (see eq. (\ref{eq_CK_1}%
))%
\begin{equation}
C_{B}^{\Delta z}=\left[
\begin{array}
[c]{cc}%
-N\delta_{s}^{z} & 0\\
0 & N\delta_{s}^{z}%
\end{array}
\right]  . \label{eq_max_Cz_1}%
\end{equation}
Contracting it with $n_{z}$, we obtain
\[
n_{z}C_{B}^{\Delta z}=C_{B}^{\Delta0}=\left[
\begin{array}
[c]{cc}%
\tilde{n}_{s} & 0\\
0 & -\tilde{n}_{s}%
\end{array}
\right]  .
\]
This last expression gives the constraints $\psi_{1,2}$ when it is contracted
with $\left[
\begin{array}
[c]{c}%
Q_{1}^{s}\\
Q_{2}^{s}%
\end{array}
\right]  $.

On the other hand, we choose the following reduction
\[
h_{B}^{\alpha}=\left[
\begin{array}
[c]{cc}%
N\tilde{\eta}_{s}^{l} & 0\\
0 & -N\tilde{\eta}_{s}^{l}%
\end{array}
\right]  .
\]
This reduction leads to the standard symmetric hyperbolic Maxwell evolution
equations $e_{1,2}^{l}$ when it is contracted with $\left[
\begin{array}
[c]{c}%
Q_{1}^{s}\\
Q_{2}^{s}%
\end{array}
\right]  $. Combining the last two expressions, we conclude that
\begin{equation}
\left[
\begin{array}
[c]{c}%
h_{B}^{\alpha}\\
C_{B}^{\Delta0}%
\end{array}
\right]  =\left[
\begin{array}
[c]{cc}%
N\tilde{\eta}_{s}^{l} & 0\\
0 & -N\tilde{\eta}_{s}^{l}\\
\tilde{n}_{s} & 0\\
0 & -\tilde{n}_{s}%
\end{array}
\right]  . \label{eq_max_h_C0_1}%
\end{equation}
Then by eqs. (\ref{eq_Q1_E_B_1}) and (\ref{eq_Q2_E_B_1}), we obtain
\begin{equation}
\left[
\begin{array}
[c]{c}%
h_{B}^{\alpha}\\
C_{B}^{\Delta0}%
\end{array}
\right]  E^{B}=\left[
\begin{array}
[c]{cc}%
N\tilde{\eta}_{s}^{l} & 0\\
0 & -N\tilde{\eta}_{s}^{l}\\
\tilde{n}_{s} & 0\\
0 & -\tilde{n}_{s}%
\end{array}
\right]  \left[
\begin{array}
[c]{c}%
Q_{1}^{s}\\
Q_{2}^{s}%
\end{array}
\right]  =\left[
\begin{array}
[c]{c}%
\tilde{e}_{1}^{l}\\
\tilde{e}_{2}^{l}\\
\psi_{1}\\
\psi_{2}%
\end{array}
\right]  . \label{eq_max_e_cons_1}%
\end{equation}

The inverse of $\left[
\begin{array}
[c]{c}%
h_{B}^{\alpha}\\
C_{B}^{\Delta0}%
\end{array}
\right]  $ has the following form
\begin{equation}
\left[
\begin{array}
[c]{cc}%
\mathfrak{N}_{~\alpha}^{A0} & h_{~\Delta}^{A}%
\end{array}
\right]  =\left[
\begin{array}
[c]{cccc}%
\frac{1}{N}\tilde{\eta}_{l}^{s} & 0 & -\tilde{m}^{s} & 0\\
0 & -\frac{1}{N}\tilde{\eta}_{l}^{s} & 0 & \tilde{m}^{s}%
\end{array}
\right]  , \label{eq_max_inv_1}%
\end{equation}
and, of course, satisfies that
\begin{align}
\left[
\begin{array}
[c]{cc}%
\mathfrak{N}_{~\alpha}^{A0} & h_{~\Delta}^{A}%
\end{array}
\right]  \left[
\begin{array}
[c]{c}%
h_{B}^{\alpha}\\
C_{B}^{\Delta0}%
\end{array}
\right]   &  =\delta_{B}^{A},\nonumber\\
\left[
\begin{array}
[c]{cccc}%
\frac{1}{N}\tilde{\eta}_{l}^{s} & 0 & -\tilde{m}^{s} & 0\\
0 & -\frac{1}{N}\tilde{\eta}_{l}^{s} & 0 & \tilde{m}^{s}%
\end{array}
\right]  \left[
\begin{array}
[c]{cc}%
N\tilde{\eta}_{s}^{l} & 0\\
0 & -N\tilde{\eta}_{s}^{l}\\
\tilde{n}_{s} & 0\\
0 & -\tilde{n}_{s}%
\end{array}
\right]   &  =\left[
\begin{array}
[c]{cc}%
\delta_{s}^{l} & 0\\
0 & \delta_{s}^{l}%
\end{array}
\right]  . \label{eq_p_p_1_1}%
\end{align}

Using this expression and the equations (\ref{eq_p_p_1_1}) and
(\ref{eq_max_e_cons_1}), we conclude that
\begin{align*}
\nabla_{d}\left(  C_{B}^{\Delta d}E^{B}\right)   &  =\nabla_{d}\left(  \left(
C_{A}^{\Delta d}\left[
\begin{array}
[c]{cc}%
\mathfrak{N}_{~\alpha}^{A0} & h_{~\Gamma}^{A}%
\end{array}
\right]  \right)  \left(  \left[
\begin{array}
[c]{c}%
h_{B}^{\alpha}\\
C_{B}^{\Gamma0}%
\end{array}
\right]  E^{B}\right)  \right) \\
&  \nabla_{d}\left(  \left[
\begin{array}
[c]{cc}%
-N\delta_{s}^{d} & 0\\
0 & N\delta_{s}^{d}%
\end{array}
\right]  \left[
\begin{array}
[c]{c}%
Q_{1}^{s}\\
Q_{2}^{s}%
\end{array}
\right]  \right) \\
&  =\nabla_{d}\left(  \left[
\begin{array}
[c]{cccc}%
-\tilde{\eta}_{l}^{s} & 0 & N\tilde{m}^{s} & 0\\
0 & -\tilde{\eta}_{l}^{s} & 0 & N\tilde{m}^{s}%
\end{array}
\right]  \left[
\begin{array}
[c]{c}%
\tilde{e}_{1}^{l}\\
\tilde{e}_{2}^{l}\\
\psi_{1}\\
\psi_{2}%
\end{array}
\right]  \right)  ,
\end{align*}
which allows to rewrite the identities (\ref{eq_max_id_mat_1}) as
\begin{align*}
0  &  =%
\mathcal{L}%
_{p}\left(  \left[
\begin{array}
[c]{c}%
\psi_{1}\\
\psi_{2}%
\end{array}
\right]  \right)  +\left[
\begin{array}
[c]{cc}%
-N-N\tilde{m}^{d}L_{1d} & N\tilde{m}^{d}L_{2d}\\
0 & -NK
\end{array}
\right]  \left[
\begin{array}
[c]{c}%
\psi_{1}\\
\psi_{2}%
\end{array}
\right] \\
&  +\left[
\begin{array}
[c]{c}%
-N\nabla_{d}\left(  \frac{1}{N}e_{1}^{d}\right)  +L_{1d}\tilde{e}_{1}%
^{d}-L_{2d}\tilde{e}_{2}^{d}\\
-N\nabla_{d}\left(  \frac{1}{N}e_{2}^{d}\right)
\end{array}
\right]  .
\end{align*}
On-shell (i.e., when $e_{1,2}^{d}=0$), these equations lead to the subsidiary
system
\begin{equation}
0=%
\mathcal{L}%
_{p}\left(  \left[
\begin{array}
[c]{c}%
\psi_{1}\\
\psi_{2}%
\end{array}
\right]  \right)  +\left[
\begin{array}
[c]{cc}%
-N-N\tilde{m}^{d}L_{1d} & N\tilde{m}^{d}L_{2d}\\
0 & -NK
\end{array}
\right]  \left[
\begin{array}
[c]{c}%
\psi_{1}\\
\psi_{2}%
\end{array}
\right]  . \label{Eq_sub_max_1}%
\end{equation}

Clearly (since there are no spatial derivatives of $\psi_{1,2}$) this system
is strongly hyperbolic, so it has a unique solution for a given initial data.
Since the initial data is chosen such that $\left.  \psi_{1,2}\right\vert
_{\Sigma_{0}}=0$ and $\psi_{1,2}=0$ is a solution of the subsidiary system, we
conclude (by the uniqueness of the solutions) that the constraints are
preserved during the evolutions $e_{1,2}^{d}=0$.

We now consider the relationship between the principal symbols of the
evolution equations and the principal symbols of the constraints equations.

Using equations (\ref{eq_max_K_1}), (\ref{eq_max_h_C0_1}), (\ref{eq_max_Cz_1})
and (\ref{eq_max_inv_1}), we obtain the matrices $\left[
\begin{array}
[c]{c}%
h_{B}^{\alpha}\\
C_{B}^{\Gamma0}%
\end{array}
\right]  \mathfrak{N}_{~\beta}^{Bq}$ and $l_{z}C_{A}^{\Delta z}\left[
\begin{array}
[c]{cc}%
\mathfrak{N}_{~\alpha}^{A0} & h_{~\Gamma}^{A}%
\end{array}
\right]  $, they are
\begin{equation}
\left[
\begin{array}
[c]{c}%
h_{B}^{\alpha}\\
C_{B}^{\Gamma0}%
\end{array}
\right]  \mathfrak{N}_{~\beta}^{Bq}l_{q}\phi^{\beta}=\left[
\begin{array}
[c]{cc}%
\tilde{\eta}_{b}^{l}p^{q} & N\varepsilon_{~\ \ \ \ b}^{qld}\tilde{n}_{d}\\
-N\varepsilon_{~\ \ \ \ b}^{qld}\tilde{n}_{d} & \tilde{\eta}_{b}^{l}p^{q}\\
\tilde{\eta}_{b}^{q} & 0\\
0 & \tilde{\eta}_{b}^{q}%
\end{array}
\right]  l_{q}\left[
\begin{array}
[c]{c}%
E^{b}\\
B^{b}%
\end{array}
\right]  , \label{Eq_max_1}%
\end{equation}
and%
\[
l_{z}C_{A}^{\Delta z}\left[
\begin{array}
[c]{cc}%
\mathfrak{N}_{~\alpha}^{A0} & h_{~\Gamma}^{A}%
\end{array}
\right]  =l_{z}\left[
\begin{array}
[c]{cccc}%
-\tilde{\eta}_{l}^{z} & 0 & N\tilde{m}^{z} & 0\\
0 & -\tilde{\eta}_{l}^{z} & 0 & N\tilde{m}^{z}%
\end{array}
\right]  .
\]
Recalling that
\[
l_{z}C_{A}^{\Delta z}\left[
\begin{array}
[c]{cc}%
\mathfrak{N}_{~\alpha}^{A0} & h_{~\Gamma}^{A}%
\end{array}
\right]  \left[
\begin{array}
[c]{c}%
h_{B}^{\alpha}\\
C_{B}^{\Gamma0}%
\end{array}
\right]  \mathfrak{N}_{~\beta}^{Bq}l_{q}=0,
\]
we concluded
\begin{align}
&  l_{z}\left[
\begin{array}
[c]{cc}%
-\tilde{\eta}_{l}^{z} & 0\\
0 & -\tilde{\eta}_{l}^{z}%
\end{array}
\right]  \left[
\begin{array}
[c]{cc}%
\tilde{\eta}_{b}^{l}p^{q} & N\varepsilon_{~\ \ \ \ b}^{qld}\tilde{n}_{d}\\
-N\varepsilon_{~\ \ \ \ b}^{qld}\tilde{n}_{d} & \tilde{\eta}_{b}^{l}p^{q}%
\end{array}
\right]  l_{q}\nonumber\\
&  =l_{z}\left[
\begin{array}
[c]{cc}%
p^{z} & 0\\
0 & p^{z}%
\end{array}
\right]  \left[
\begin{array}
[c]{cc}%
-\tilde{\eta}_{b}^{q} & 0\\
0 & -\tilde{\eta}_{b}^{q}%
\end{array}
\right]  l_{q}. \label{eq_mas_C_A_P_C_1}%
\end{align}
This expression is exactly the equation (\ref{Eq_simbs_1}), where%
\[
h_{B}^{\alpha}\mathfrak{N}_{~\beta}^{Bq}l_{q}=\left[
\begin{array}
[c]{cc}%
\tilde{\eta}_{b}^{l}p^{q} & N\varepsilon_{~\ \ \ \ b}^{qld}\tilde{n}_{d}\\
-N\varepsilon_{~\ \ \ \ b}^{qld}\tilde{n}_{d} & \tilde{\eta}_{b}^{l}p^{q}%
\end{array}
\right]  l_{q}%
\]
is the principal symbol of $\left[
\begin{array}
[c]{c}%
e_{1}^{l}\\
e_{2}^{l}%
\end{array}
\right]  $,
\[
l_{z}C_{A}^{\Delta z}h_{~\Gamma}^{A}=l_{z}\left[
\begin{array}
[c]{cc}%
p^{z} & 0\\
0 & p^{z}%
\end{array}
\right]
\]
is the principal symbol of the subsidiary system and%
\[
C_{B}^{\Gamma0}\mathfrak{N}_{~\beta}^{Bq}l_{q}=\left[
\begin{array}
[c]{cc}%
\tilde{\eta}_{b}^{q} & 0\\
0 & \tilde{\eta}_{b}^{q}%
\end{array}
\right]  l_{q}.
\]

Let us now study the characteristic structures of the system and the
subsidiary system. We will study the Kronecker decomposition of the pencils
$\left[
\begin{array}
[c]{c}%
h_{~A}^{\alpha}\mathfrak{N}_{~\alpha}^{Aq}\\
C_{A}^{\Gamma0}\mathfrak{N}_{~\alpha}^{Aq}%
\end{array}
\right]  l_{q}$ and $C_{A}^{\Delta d}h_{~\Gamma}^{A}l_{d}$ with $l_{q}%
=-\lambda n_{a}+k_{a}$ and $k_{a}\left(  \partial_{t}\right)  ^{a}=0$. In
their pullback version to $\Sigma_{t}$, these pencils are
\begin{align}
&  \left[
\begin{array}
[c]{c}%
h_{~A}^{\alpha}\mathfrak{N}_{~\alpha}^{Aq}\\
C_{A}^{\Gamma0}\mathfrak{N}_{~\alpha}^{Aq}%
\end{array}
\right]  l_{q}\phi^{\alpha}\nonumber\\
&  =\left[
\begin{array}
[c]{cc}%
\tilde{\eta}_{b}^{l}\left(  -\lambda-\left(  \beta.k\right)  \right)  &
N\varepsilon_{~\ \ \ \ b}^{dl}k_{d}\\
-N\varepsilon_{~\ \ \ \ b}^{dl}k_{d} & \left(  -\lambda-\left(  \beta
.k\right)  \right)  \tilde{\eta}_{b}^{l}\\
k_{b} & 0\\
0 & k_{b}%
\end{array}
\right]  \left[
\begin{array}
[c]{c}%
E^{b}\\
B^{b}%
\end{array}
\right]  , \label{Eq_Max_p_1}%
\end{align}%
\begin{equation}
C_{A}^{\Delta q}h_{~\Gamma}^{A}l_{q}\psi^{\Gamma}=\left[
\begin{array}
[c]{cc}%
\left(  -\lambda-\left(  \beta.k\right)  \right)  & 0\\
0 & \left(  -\lambda-\left(  \beta.k\right)  \right)
\end{array}
\right]  \left[
\begin{array}
[c]{c}%
\psi_{1}\\
\psi_{2}%
\end{array}
\right]  . \label{Eq_Max_p_2}%
\end{equation}
Where $\varepsilon^{qwy}:=\tilde{n}_{d}\varepsilon^{dqwy}$ is the Levi-Civita
tensor over $\Sigma_{t}$; and all the lowercase indices $l,b,d$ (and any other
lowercase indices that appear until the end of this section) run from $1$ to
$3$. We also note the size of the matrices $\left[
\begin{array}
[c]{c}%
h_{~A}^{\alpha}\mathfrak{N}_{~\alpha}^{Aq}\\
C_{A}^{\Gamma0}\mathfrak{N}_{~\alpha}^{Aq}%
\end{array}
\right]  l_{q}$ $\in%
\mathbb{R}
^{8\times6}$, $C_{A}^{\Delta q}h_{~\Gamma}^{A}l_{q}\in%
\mathbb{R}
^{2\times2}$, $h_{~A}^{\alpha}\mathfrak{N}_{~\alpha}^{Aq}l_{q}\in%
\mathbb{R}
^{6\times6}$ and $C_{A}^{\Gamma0}\mathfrak{N}_{~\alpha}^{Aq}l_{q}\in%
\mathbb{R}
^{2\times6}$\ in this pullbacked version.

Consider the matrix $\left(  C_{B}^{\Delta0}\mathfrak{N}_{~\alpha}%
^{Bq}\right)  l_{q}$ whose expression,
\[
C_{B}^{\Gamma0}\mathfrak{N}_{~\beta}^{Bq}l_{q}\phi^{\beta}=\left[
\begin{array}
[c]{cc}%
k_{b} & 0\\
0 & k_{b}%
\end{array}
\right]  \left[
\begin{array}
[c]{c}%
E^{b}\\
B^{b}%
\end{array}
\right]  .
\]
is obtained from (\ref{Eq_Max_p_1}). For each $k_{q}$, we have%
\begin{align}
\dim\left(  left\_\ker\left(  C_{B}^{\Gamma0}\mathfrak{N}_{~\beta}^{Bq}%
l_{q}\right)  \right)   &  =0,\label{eq_max_lf_1}\\
rank\left(  C_{B}^{\Gamma0}\mathfrak{N}_{~\beta}^{Bq}l_{q}\right)   &
=2,\label{eq_max_r_1}\\
\dim\left(  right\_\ker\left(  C_{B}^{\Delta0}\mathfrak{N}_{~\alpha}^{Bq}%
k_{q}\right)  \right)   &  =4.
\end{align}
This means that there are not Geroch fields $M_{\Gamma}^{\tilde{\Delta}z}$
such that $l_{z}M_{\Gamma}^{\tilde{\Delta}z}C_{B}^{\Gamma0}\mathfrak{N}%
_{~\beta}^{Bq}l_{q}=0$ and therefore the condition v) of the theorem
\ref{Theorem_coef_const_2} is satisfied.

This result allows us to give the Kronecker structure associated with the
Maxwell equation, i.e.
\[
2\times J_{1}\left(  N\sqrt{k.k}-\beta.k\right)  ,2\times J_{1}\left(
-N\sqrt{k.k}-\beta.k\right)  ,2\times L_{1}^{T}.
\]
is the Kronecker structure of the pencil (\ref{Eq_Max_p_1}).

The $2\times L_{1}^{T}$ blocks are justified by (\ref{eq_max_lf_1}) and
(\ref{eq_max_r_1}) as explained in subsubsection \ref{Kronecker_decomposition}%
. The Jordan part is justified by giving explicitly the generalized
eigenvectors
\[
\left(  \delta\phi_{\lambda_{1}}^{1}\right)  ^{\beta}=\left[
\begin{array}
[c]{c}%
\frac{1}{\sqrt{\left(  v.v\right)  }}v^{b}\\
\frac{1}{\sqrt{\left(  w.w\right)  }}w^{b}%
\end{array}
\right]  ,\text{ \ }\left(  \delta\phi_{\lambda_{1}}^{2}\right)  ^{\beta
}=\left[
\begin{array}
[c]{c}%
\frac{1}{\sqrt{\left(  w.w\right)  }}w^{b}\\
-\frac{1}{\sqrt{\left(  v.v\right)  }}v^{b}%
\end{array}
\right]
\]%
\[
\left(  \delta\phi_{\lambda_{2}}^{1}\right)  ^{\beta}=\left[
\begin{array}
[c]{c}%
\frac{1}{\sqrt{\left(  v.v\right)  }}v^{b}\\
-\frac{1}{\sqrt{\left(  w.w\right)  }}w^{b}%
\end{array}
\right]  ,\text{ \ }\left(  \delta\phi_{\lambda_{2}}^{2}\right)  ^{\beta
}=\left[
\begin{array}
[c]{c}%
\frac{1}{\sqrt{\left(  w.w\right)  }}w^{b}\\
\frac{1}{\sqrt{\left(  v.v\right)  }}v^{b}%
\end{array}
\right]
\]
where $v^{l}$ and $w^{l}$ are linearly independent vectors, defined by
\begin{align*}
2v^{[l}w^{d]}  &  =\varepsilon^{qlb}k_{q}\\
v.k  &  =v.w=w.k=0.
\end{align*}
They are associated to the generalized eigenvalues $\lambda_{1}=N\sqrt
{k.k}-\beta.k$ and $\lambda_{2}=-N\sqrt{k.k}-\beta.k$. These eigenvalues were
called "the physical" eigenvalues in subsection
\ref{Proof_Theorem_coef_const_2}.

Now, let us study the characteristic structure of $h_{~A}^{\alpha}%
\mathfrak{N}_{~\alpha}^{Aq}l_{q}$, whose explicit expression is
\[
h_{~A}^{\alpha}\mathfrak{N}_{~\alpha}^{Aq}l_{q}\phi^{\alpha}=\left[
\begin{array}
[c]{cc}%
\tilde{\eta}_{b}^{l}\left(  -\lambda-\left(  \beta.k\right)  \right)  &
N\varepsilon_{~\ \ \ \ b}^{dl}k_{d}\\
-N\varepsilon_{~\ \ \ \ b}^{dl}k_{d} & \left(  -\lambda-\left(  \beta
.k\right)  \right)  \tilde{\eta}_{b}^{l}%
\end{array}
\right]  \left[
\begin{array}
[c]{c}%
E^{b}\\
B^{b}%
\end{array}
\right]  .
\]
Since $\left(  \lambda_{1,2},\left(  \delta\phi_{\lambda_{1,2}}\right)
^{\beta}\right)  $ are the generalized eigenvalues and eigenvectors of
$\left[
\begin{array}
[c]{c}%
h_{~A}^{\alpha}\mathfrak{N}_{~\alpha}^{Aq}\\
C_{A}^{\Gamma0}\mathfrak{N}_{~\alpha}^{Aq}%
\end{array}
\right]  l_{q}$,\ \ they are also the eigenvalues and eigenvectors of
$h_{~A}^{\alpha}\mathfrak{N}_{~\alpha}^{Aq}l_{q}$. Moreover, $h_{~A}^{\alpha
}\mathfrak{N}_{~\alpha}^{Aq}l_{q}$ has $2$ additional eigenvalues $\pi
_{1,2}\left(  k\right)  =-\beta.k$, these were called "the constraints 1"
eigenvalues in subsection \ref{Proof_Theorem_coef_const_2}. \ These
eigenvalues have the associated eigenvectors
\[
\left(  \delta\phi_{\pi_{1}}\right)  ^{\beta}=\left[
\begin{array}
[c]{c}%
k^{b}\\
0
\end{array}
\right]  \text{ \ \ }\left(  \delta\phi_{\pi_{2}}\right)  ^{\beta}=\left[
\begin{array}
[c]{c}%
0\\
k^{b}%
\end{array}
\right]  .
\]

To conclude this subsection, let us now study the Kronecker decomposition of
the pencil $C_{A}^{\Delta q}h_{~\Gamma}^{A}l_{q}$ \ (see eq. (\ref{Eq_Max_p_2}%
)). This is a square pencil and it is associated to the constraints as can be
seen from (\ref{Eq_sub_max_1}). Its Kronecker decomposition is given by
\[
2\times J_{1}\left(  -\beta.k\right)  .
\]

As it was shown in subsubsection \ref{Kro_sub}, the generalized eigenvectors
of this pencil are obtained by projecting $\left\{  \left(  \delta\phi
_{\pi_{1}}\right)  ^{\beta},\left(  \delta\phi_{\pi_{2}}\right)  ^{\beta
}\right\}  $ with $C_{B}^{\Gamma0}\mathfrak{N}_{~\beta}^{Bq}k_{q}$, that is,
\begin{align*}
\left[  \delta\psi_{\pi_{1}}^{\Gamma},\delta\psi_{\pi_{2}}^{\Gamma}\right]
&  :=C_{B}^{\Gamma0}\mathfrak{N}_{~\beta}^{Bq}l_{q}\left[  \left(  \delta
\phi_{1}\right)  ^{\beta},\left(  \delta\phi_{\pi_{2}}\right)  ^{\beta
}\right]  ,\\
&  =\left[
\begin{array}
[c]{cc}%
k_{b} & 0\\
0 & k_{b}%
\end{array}
\right]  \left[
\begin{array}
[c]{cc}%
k^{b} & 0\\
0 & k^{b}%
\end{array}
\right]  ,\\
&  =\left[
\begin{array}
[c]{cc}%
\left(  k.k\right)  & 0\\
0 & \left(  k.k\right)
\end{array}
\right]  .
\end{align*}
Where the eigenvectors are the columns of this matrix and they are associated
to "the constraints 1" eigenvalues $\pi_{1,2,3}\left(  k\right)  =-\beta.k$.
This result follows from equation (\ref{Eq_simbs_1}) and can be easily
checked, since $C_{A}^{\Delta q}h_{~\Gamma}^{A}l_{q}=0$ when $\lambda
=-\beta.k$.

\subsection{Wave equation \label{S_wave_equation}}

Consider the wave equation%
\begin{equation}
g^{ab}\nabla_{a}\nabla_{b}\phi=0. \label{eq_wave_1}%
\end{equation}
We lead this equation to first order in derivatives. The wave equation in
first-order is
\begin{equation}
E=0,\text{ \ \ }E_{b}=0,\text{ \ \ }E_{ab}=0. \label{Eq_W_E_1}%
\end{equation}
where we have defined
\[
u_{b}:=\nabla_{b}\phi,
\]
and
\begin{align*}
E  &  :=g^{ab}\nabla_{a}u_{b},\\
E_{b}  &  :=\nabla_{b}\phi-u_{b},\\
E_{ab}  &  :=\nabla_{\lbrack a}u_{b]}.
\end{align*}

Notice that $E_{ab}$ is obtained from $E_{b}$ by taking an antisymmetric
derivative, i.e. $\nabla_{\lbrack a}E_{b]}=-E_{ab}$. Additionally, there is
another identity for $E_{ab}$, this is $\nabla_{\lbrack c}E_{ab]}=0$
\footnote{To show this result, you need to use the first Bianchi identity.}.
Thus, the off-shell identities of the system are%

\begin{align}
0  &  =\nabla_{f}\left(  \delta_{a}^{[f}\delta_{b}^{g]}E_{g}\right)
+E_{ab},\label{Eq_W_id_1}\\
0  &  =\nabla_{c}\left(  \delta_{f}^{[c}\delta_{g}^{a}\delta_{h}^{b]}%
E_{ab}\right)  . \label{Eq_W_id_2}%
\end{align}
The following four projections of these equations
\[
0=\left[
\begin{array}
[c]{c}%
2N\tilde{m}^{a}\tilde{\eta}_{r}^{b}\left(  \nabla_{f}\left(  \delta_{a}%
^{[f}\delta_{b}^{g]}E_{g}\right)  +E_{ab}\right) \\
3N\tilde{m}^{f}\tilde{\eta}_{g}^{d}\tilde{\eta}_{h}^{e}\nabla_{c}\left(
\delta_{f}^{[c}\delta_{d}^{a}\delta_{e}^{b]}E_{ab}\right) \\
\tilde{\eta}_{s}^{a}\tilde{\eta}_{r}^{b}\nabla_{f}\left(  \delta_{a}%
^{[f}\delta_{b}^{g]}E_{g}\right)  +E_{ab}\\
\tilde{n}_{d}\varepsilon^{dfgh}\nabla_{c}\left(  \delta_{f}^{[c}\delta_{g}%
^{a}\delta_{h}^{b]}E_{ab}\right)
\end{array}
\right]  ,
\]
can be rewritten as follows
\begin{align}
0  &  =\nabla_{z}\left(  \left[
\begin{array}
[c]{ccc}%
0 & 2N\tilde{m}^{[z}\tilde{\eta}_{r}^{g]} & 0\\
0 & 0 & 3N\tilde{m}^{[z}\tilde{\eta}_{g}^{a}\tilde{\eta}_{h}^{b]}\\
0 & \tilde{\eta}_{[s}^{z}\tilde{\eta}_{r]}^{g} & 0\\
0 & 0 & \tilde{n}_{d}\varepsilon^{dzab}%
\end{array}
\right]  \left[
\begin{array}
[c]{c}%
E\\
E_{g}\\
E_{ab}%
\end{array}
\right]  \right) \nonumber\\
&  +\left[
\begin{array}
[c]{ccc}%
0 & -\nabla_{z}\left(  2N\tilde{m}^{[z}\tilde{\eta}_{r}^{g]}\right)  &
2N\tilde{m}^{a}\tilde{\eta}_{r}^{b}\\
0 & 0 & -\nabla_{z}\left(  3N\tilde{m}^{[z}\tilde{\eta}_{g}^{a}\tilde{\eta
}_{h}^{b]}\right) \\
0 & -\nabla_{z}\left(  \tilde{\eta}_{s}^{[z}\tilde{\eta}_{r}^{g]}\right)  &
\tilde{\eta}_{s}^{[a}\tilde{\eta}_{r}^{b]}\\
0 & 0 & -\nabla_{z}\left(  \tilde{n}_{d}\varepsilon^{dzab}\right)
\end{array}
\right]  \left[
\begin{array}
[c]{c}%
E\\
E_{g}\\
E_{ab}%
\end{array}
\right]  . \label{eq_wav_id_dCE_1}%
\end{align}

As we will show, these expressions are exactly the evolution equations of the
constraints and the constraints of the constraints of the system. The latter
is associated to the Geroch fields $M_{\Gamma}^{\tilde{\Delta}z}$.

We now introduce new variables
\begin{align*}
\tilde{u}^{0}  &  :=\tilde{n}_{b}u^{b},\\
\tilde{u}_{d}  &  :=\tilde{\eta}_{db}u^{b},
\end{align*}
where $\tilde{u}_{d}$ results from projecting $u^{b}$ onto $\Sigma_{t}$. Since
we are interested in describing the system with the variables $\left(
\tilde{u}^{0},\text{ }\phi,\text{ }\tilde{u}_{w}\right)  $, we rewrite the
equations (\ref{Eq_W_E_1}) as follows
\begin{align}
E  &  =-\frac{1}{N}\tilde{e}_{1},\label{eq_wav_E_1}\\
E_{c}  &  =-\frac{1}{N}\tilde{n}_{c}\tilde{e}_{2}+\psi_{1c},\label{eq_wav_E_2}%
\\
E_{ac}  &  =\frac{1}{N}\tilde{e}_{3[a}\tilde{n}_{c]}+\psi_{2ac},
\label{eq_wav_E_3}%
\end{align}
where
\begin{align}
\tilde{e}_{1}  &  :=%
\mathcal{L}%
_{p}\tilde{u}^{0}-ND_{d}\tilde{u}^{d}-N\left(  \tilde{u}^{w}S_{w}\right)
-N\tilde{u}^{0}K,\label{Eq_W_sis_1}\\
\tilde{e}_{2}  &  :=%
\mathcal{L}%
_{p}\phi-N\tilde{u}^{0},\label{Eq_W_sis_2}\\
\tilde{e}_{3a}  &  :=%
\mathcal{L}%
_{p}\tilde{u}_{a}-ND_{a}\tilde{u}^{0}-N\left(  \tilde{u}^{r}K_{ar}+\tilde
{u}^{0}S_{a}\right)  -N\tilde{u}^{f}K_{fa},\label{Eq_W_sis_3}\\
\psi_{1c}  &  :=D_{f}\phi-\tilde{u}_{f},\label{Eq_W_sis_4}\\
\psi_{2ac}  &  :=D_{[a}\tilde{u}_{c]}. \label{Eq_W_sis_5}%
\end{align}

Notice that (\ref{Eq_W_sis_1}-\ref{Eq_W_sis_5}) are projected onto $\Sigma
_{t}$, where $\tilde{e}_{1},$ $\tilde{e}_{2},$ $\tilde{e}_{3a}$ are the
evolution equations for our variables and $\psi_{1c},$ $\psi_{2ac}$ are the
constraints of the system.

Using the expressions (\ref{eq_wav_E_1}), (\ref{eq_wav_E_2}) and
(\ref{eq_wav_E_3}), we obtain the principal symbol of the system
\begin{align*}
\left[
\begin{array}
[c]{c}%
E\\
E_{c}\\
E_{ac}%
\end{array}
\right]   &  \approx\mathfrak{N}_{~\beta}^{Bq}\nabla_{q}\phi^{\beta}\\
&  =\left[
\begin{array}
[c]{ccc}%
-\tilde{m}^{q} & 0 & \tilde{\eta}^{qw}\\
0 & \delta_{c}^{q} & 0\\
-\tilde{n}_{[c}\tilde{\eta}_{a]}^{q} & 0 & -\delta_{\lbrack c}^{q}\tilde{\eta
}_{a]}^{w}%
\end{array}
\right]  \nabla_{q}\left[
\begin{array}
[c]{c}%
\tilde{u}^{0}\\
\phi\\
\tilde{u}_{w}%
\end{array}
\right]
\end{align*}
and the Geroch fields $C_{A}^{\Gamma z}$ and $M_{A}^{\tilde{\Delta}z}$
\begin{equation}
\left[
\begin{array}
[c]{c}%
C_{A}^{\Gamma z}\\
M_{A}^{\tilde{\Delta}z}%
\end{array}
\right]  E^{A}=\left[
\begin{array}
[c]{ccc}%
0 & 2N\tilde{m}^{[z}\tilde{\eta}_{r}^{g]} & 0\\
0 & 0 & 3N\tilde{m}^{[z}\tilde{\eta}_{g}^{a}\tilde{\eta}_{h}^{b]}\\
0 & \tilde{\eta}_{[s}^{z}\tilde{\eta}_{r]}^{g} & 0\\
0 & 0 & \tilde{n}_{d}\varepsilon^{dzab}%
\end{array}
\right]  \left[
\begin{array}
[c]{c}%
E\\
E_{g}\\
E_{ab}%
\end{array}
\right]  . \label{Eq_W_C_M_1}%
\end{equation}
The first two lines are associated with $C_{A}^{\Gamma z}$ and the next two
with $M_{A}^{\tilde{\Delta}z}$. Notice that
\[
\left[
\begin{array}
[c]{c}%
C_{A}^{\Gamma z}\\
M_{A}^{\tilde{\Delta}z}%
\end{array}
\right]  n_{z}=\left[
\begin{array}
[c]{ccc}%
0 & \tilde{\eta}_{w}^{c} & 0\\
0 & 0 & \tilde{\eta}_{w}^{[a}\tilde{\eta}_{y}^{c]}\\
0 & 0 & 0\\
0 & 0 & 0
\end{array}
\right]
\]
where $M_{A}^{\tilde{\Delta}0}:=M_{A}^{\tilde{\Delta}z}n_{z}=0$ and
\[
C_{A}^{\Gamma0}:=C_{A}^{\Gamma z}n_{z}=\left[
\begin{array}
[c]{ccc}%
0 & \tilde{\eta}_{w}^{c} & 0\\
0 & 0 & \tilde{\eta}_{w}^{[a}\tilde{\eta}_{y}^{c]}%
\end{array}
\right]  .
\]
The constraints\ $\psi_{1c}$ and $\psi_{2ac}$ are obtained by contracting this
last expression with $E^{A}$.

On the other hand, we choose the following reduction $h_{~A}^{\alpha}$, such
that%
\[
h_{~A}^{\alpha}E^{A}=\left[
\begin{array}
[c]{ccc}%
-N & 0 & 0\\
0 & N\tilde{m}^{c} & 0\\
0 & 0 & -2N\tilde{\eta}_{q}^{[a}\tilde{m}^{c]}%
\end{array}
\right]  \left[
\begin{array}
[c]{c}%
E\\
E_{c}\\
E_{ac}%
\end{array}
\right]  .
\]
which gives the (symmetric hyperbolic) evolution equations $\tilde{e}_{1,2,3}$.

Combining the last two results, we have that
\begin{equation}
\left[
\begin{array}
[c]{c}%
h_{~A}^{\alpha}\\
C_{A}^{\Gamma0}%
\end{array}
\right]  E^{A}=\left[
\begin{array}
[c]{ccc}%
-N & 0 & 0\\
0 & N\tilde{m}^{c} & 0\\
0 & 0 & -2N\tilde{\eta}_{q}^{[a}\tilde{m}^{c]}\\
0 & \tilde{\eta}_{w}^{c} & 0\\
0 & 0 & \tilde{\eta}_{w}^{[a}\tilde{\eta}_{y}^{c]}%
\end{array}
\right]  \left[
\begin{array}
[c]{c}%
E\\
E_{c}\\
E_{ac}%
\end{array}
\right]  =\left[
\begin{array}
[c]{c}%
\tilde{e}_{1}\\
\tilde{e}_{2}\\
\tilde{e}_{3q}\\
\psi_{1w}\\
\psi_{2wy}%
\end{array}
\right]  . \label{eq_wav_e_c_1}%
\end{equation}
The principal symbol $\left[
\begin{array}
[c]{c}%
h_{~A}^{\alpha}\\
C_{A}^{\Gamma0}%
\end{array}
\right]  \mathfrak{N}_{~\alpha}^{Aq}l_{q}$ of the latter expression is
\begin{equation}
\left[
\begin{array}
[c]{c}%
h_{~A}^{\alpha}\\
C_{A}^{\Gamma0}%
\end{array}
\right]  \mathfrak{N}_{~\alpha}^{Aq}\nabla_{q}\left[
\begin{array}
[c]{c}%
\tilde{u}^{0}\\
\phi\\
\tilde{u}_{w}%
\end{array}
\right]  =\left[
\begin{array}
[c]{ccc}%
N\tilde{m}^{q} & 0 & -N\tilde{\eta}^{qw}\\
0 & N\tilde{m}^{q} & 0\\
-N\tilde{\eta}_{s}^{q} & 0 & N\tilde{\eta}_{s}^{w}\tilde{m}^{q}\\
0 & \tilde{\eta}_{s}^{q} & 0\\
0 & 0 & \tilde{\eta}_{s}^{[q}\tilde{\eta}_{y}^{w]}%
\end{array}
\right]  \nabla_{q}\left[
\begin{array}
[c]{c}%
\tilde{u}^{0}\\
\phi\\
\tilde{u}_{w}%
\end{array}
\right]  . \label{Eq_W_pencil_1}%
\end{equation}

On the other hand, the inverse of $\left[
\begin{array}
[c]{c}%
h_{B}^{\alpha}\\
C_{B}^{\Delta0}%
\end{array}
\right]  $ has the following form%
\[
\left[
\begin{array}
[c]{cc}%
\mathfrak{N}_{~\alpha}^{B0} & h_{~\Gamma}^{B}%
\end{array}
\right]  =\left[
\begin{array}
[c]{ccccc}%
-\frac{1}{N} & 0 & 0 & 0 & 0\\
0 & -\frac{1}{N}\tilde{n}_{c} & 0 & \tilde{\eta}_{c}^{w} & 0\\
0 & 0 & \frac{1}{N}\tilde{\eta}_{[a}^{q}\tilde{n}_{c]} & 0 & \tilde{\eta}%
_{[a}^{w}\tilde{\eta}_{c]}^{y}%
\end{array}
\right]  ,
\]
and satisfy that
\[
\left[
\begin{array}
[c]{cc}%
\mathfrak{N}_{~\alpha}^{A0} & h_{~\Delta}^{A}%
\end{array}
\right]  \left[
\begin{array}
[c]{c}%
h_{B}^{\alpha}\\
C_{B}^{\Delta0}%
\end{array}
\right]  =\delta_{B}^{A},
\]
i.e.%
\begin{align}
&  \left[
\begin{array}
[c]{ccccc}%
-\frac{1}{N} & 0 & 0 & 0 & 0\\
0 & -\frac{1}{N}\tilde{n}_{r} & 0 & \tilde{\eta}_{r}^{w} & 0\\
0 & 0 & \frac{1}{N}\tilde{\eta}_{[r}^{q}\tilde{n}_{s]} & 0 & \tilde{\eta}%
_{[r}^{w}\tilde{\eta}_{s]}^{y}%
\end{array}
\right]  \left[
\begin{array}
[c]{ccc}%
-N & 0 & 0\\
0 & N\tilde{m}^{c} & 0\\
0 & 0 & -2N\tilde{\eta}_{q}^{[a}\tilde{m}^{c]}\\
0 & \tilde{\eta}_{w}^{c} & 0\\
0 & 0 & \tilde{\eta}_{w}^{[a}\tilde{\eta}_{y}^{c]}%
\end{array}
\right] \nonumber\\
&  =\left[
\begin{array}
[c]{ccc}%
1 & 0 & 0\\
0 & \delta_{r}^{c} & 0\\
0 & 0 & \delta_{r}^{[a}\delta_{s}^{c]}%
\end{array}
\right]  \label{eq_wav_p_p_1_1}%
\end{align}

Using the above expressions, we obtain
\begin{align*}
&  \left[
\begin{array}
[c]{c}%
C_{A}^{\Delta d}\\
M_{A}^{\tilde{\Delta}z}%
\end{array}
\right]  E^{A}\\
&  =\left(  \left[
\begin{array}
[c]{c}%
C_{A}^{\Delta d}\\
M_{A}^{\tilde{\Delta}z}%
\end{array}
\right]  \left[
\begin{array}
[c]{cc}%
\mathfrak{N}_{~\alpha}^{A0} & h_{~\Gamma}^{A}%
\end{array}
\right]  \right)  \left(  \left[
\begin{array}
[c]{c}%
h_{B}^{\alpha}\\
C_{B}^{\Gamma0}%
\end{array}
\right]  E^{B}\right)  ,\\
&  =\left[
\begin{array}
[c]{ccccc}%
0 & -\tilde{\eta}_{r}^{z} & 0 & N\tilde{m}^{z}\tilde{\eta}_{r}^{w} & 0\\
0 & 0 & \tilde{\eta}_{g}^{[q}\tilde{\eta}_{h}^{z]} & 0 & N\tilde{m}^{z}%
\tilde{\eta}_{g}^{[w}\tilde{\eta}_{h}^{y]}\\
0 & 0 & 0 & \tilde{\eta}_{[s}^{z}\tilde{\eta}_{r]}^{w} & 0\\
0 & 0 & 0 & 0 & \tilde{n}_{d}\varepsilon^{dzab}\tilde{\eta}_{a}^{w}\tilde
{\eta}_{b}^{y}%
\end{array}
\right]  \left[
\begin{array}
[c]{c}%
\tilde{e}_{1}\\
\tilde{e}_{2}\\
\tilde{e}_{3q}\\
\psi_{1w}\\
\psi_{2wy}%
\end{array}
\right]  .
\end{align*}
Furthermore, $\left[
\begin{array}
[c]{c}%
C_{A}^{\Delta d}\\
M_{A}^{\tilde{\Delta}z}%
\end{array}
\right]  E^{A}$ can be written as (\ref{Eq_W_C_M_1}) and its divergence as
(\ref{eq_wav_id_dCE_1}). This leads to the following identity
\begin{align}
&  0=\nabla_{z}\left(  \left[
\begin{array}
[c]{ccccc}%
0 & -\tilde{\eta}_{r}^{z} & 0 & N\tilde{m}^{z}\tilde{\eta}_{r}^{w} & 0\\
0 & 0 & \tilde{\eta}_{g}^{[q}\tilde{\eta}_{h}^{z]} & 0 & N\tilde{m}^{z}%
\tilde{\eta}_{g}^{[w}\tilde{\eta}_{h}^{y]}\\
0 & 0 & 0 & \tilde{\eta}_{[s}^{z}\tilde{\eta}_{r]}^{w} & 0\\
0 & 0 & 0 & 0 & \tilde{n}_{d}\varepsilon^{dzab}\tilde{\eta}_{a}^{w}\tilde
{\eta}_{b}^{y}%
\end{array}
\right]  \left[
\begin{array}
[c]{c}%
\tilde{e}_{1}\\
\tilde{e}_{2}\\
\tilde{e}_{3q}\\
\psi_{1w}\\
\psi_{2wy}%
\end{array}
\right]  \right) \nonumber\\
&  +\left[
\begin{array}
[c]{ccc}%
0 & -\nabla_{z}\left(  2N\tilde{m}^{[z}\tilde{\eta}_{r}^{g]}\right)  &
2N\tilde{m}^{a}\tilde{\eta}_{r}^{b}\\
0 & 0 & -\nabla_{z}\left(  3N\tilde{m}^{[z}\tilde{\eta}_{g}^{a}\tilde{\eta
}_{h}^{b]}\right) \\
0 & -\nabla_{z}\left(  \tilde{\eta}_{s}^{[z}\tilde{\eta}_{r}^{g]}\right)  &
\tilde{\eta}_{s}^{[a}\tilde{\eta}_{r}^{b]}\\
0 & 0 & -\nabla_{z}\left(  \tilde{n}_{d}\varepsilon^{dzab}\right)
\end{array}
\right]  \left[
\begin{array}
[c]{c}%
-\frac{1}{N}\tilde{e}_{1}\\
-\frac{1}{N}\tilde{n}_{g}\tilde{e}_{2}+\psi_{1g}\\
\frac{1}{N}\tilde{e}_{3[a}\tilde{n}_{b]}+\psi_{2ab}%
\end{array}
\right]  , \label{Eq_W_D_C_M_3}%
\end{align}
where it has been using the eqs. (\ref{eq_wav_E_1}), (\ref{eq_wav_E_2}) and
(\ref{eq_wav_E_3}) \ Finally, this expression can be rewritten as%

\begin{align*}
0  &  =\left[
\begin{array}
[c]{cc}%
\tilde{\eta}_{c}^{q} & 0\\
0 & \tilde{\eta}_{s}^{g}\tilde{\eta}_{r}^{h}\\
0 & 0\\
0 & 0
\end{array}
\right]
\mathcal{L}%
_{p}\left[
\begin{array}
[c]{c}%
\psi_{1q}\\
\psi_{2gh}%
\end{array}
\right]  +\left[
\begin{array}
[c]{cc}%
0 & 0\\
0 & 0\\
\tilde{\eta}_{[s}^{f}\tilde{\eta}_{r]}^{q} & 0\\
0 & \tilde{n}_{d}\varepsilon^{dfgh}%
\end{array}
\right]  D_{f}\left[
\begin{array}
[c]{c}%
\psi_{1q}\\
\psi_{2gh}%
\end{array}
\right] \\
&  +\left[
\begin{array}
[c]{cc}%
0 & 0\\
0 & 0\\
0 & \tilde{\eta}_{[s}^{g}\tilde{\eta}_{r]}^{h}\\
0 & 0
\end{array}
\right]  \left[
\begin{array}
[c]{c}%
\psi_{1q}\\
\psi_{2gh}%
\end{array}
\right]  +\left[
\begin{array}
[c]{c}%
-2N\tilde{m}^{a}\tilde{\eta}_{c}^{b}\nabla_{\lbrack a}\left(  \tilde{n}%
_{b]}\frac{1}{N}\tilde{e}_{2}\right)  +\tilde{e}_{3c}\\
+3N\tilde{m}^{f}\tilde{\eta}_{g}^{a}\tilde{\eta}_{h}^{b}\nabla_{\lbrack
f}\left(  \tilde{e}_{3a}\tilde{n}_{b]}\frac{1}{N}\right) \\
-\tilde{\eta}_{s}^{a}\tilde{\eta}_{r}^{c}\nabla_{\lbrack a}\left(  \tilde
{n}_{c]}\frac{1}{N}\tilde{e}_{2}\right) \\
+\tilde{n}_{d}\varepsilon^{dfgh}\nabla_{f}\left(  \tilde{e}_{3g}\tilde{n}%
_{h}\frac{1}{N}\right)
\end{array}
\right]
\end{align*}
In the on-shell case ($e_{1,2,3}=0$), these equations are the subsidiary
system (1st and 2nd line) and the constraints of the constraints (3rd and 4th
line) which are trivially satisfied in the evolution. Namely,
\begin{align}
0  &  =\left[
\begin{array}
[c]{cc}%
\tilde{\eta}_{c}^{q} & 0\\
0 & \tilde{\eta}_{s}^{g}\tilde{\eta}_{r}^{h}\\
0 & 0\\
0 & 0
\end{array}
\right]
\mathcal{L}%
_{p}\left[
\begin{array}
[c]{c}%
\psi_{1q}\\
\psi_{2gh}%
\end{array}
\right]  +\left[
\begin{array}
[c]{cc}%
0 & 0\\
0 & 0\\
\tilde{\eta}_{[s}^{f}\tilde{\eta}_{r]}^{q} & 0\\
0 & \tilde{n}_{d}\varepsilon^{dfgh}%
\end{array}
\right]  D_{f}\left[
\begin{array}
[c]{c}%
\psi_{1q}\\
\psi_{2gh}%
\end{array}
\right] \label{Eq_sub_onda_1}\\
&  +\left[
\begin{array}
[c]{cc}%
0 & 0\\
0 & 0\\
0 & \tilde{\eta}_{[s}^{g}\tilde{\eta}_{r]}^{h}\\
0 & 0
\end{array}
\right]  \left[
\begin{array}
[c]{c}%
\psi_{1q}\\
\psi_{2gh}%
\end{array}
\right]  .\nonumber
\end{align}
where $%
\mathcal{L}%
_{p}=\partial_{t}-%
\mathcal{L}%
_{\beta}$, since we are considering the coordinates $\left(  t,x^{i}\right)
$. \ We note that the evolution equations obtained are quite simple. However,
by adding to these evolution equations terms proportional to the constraints
of the constraints, one can modify this system and obtain a family of
subsidiary systems. This family is obtained by contracting the expression
(\ref{Eq_sub_onda_1}) with the following reduction
\[
N_{\tilde{\Delta}}^{\Gamma}=\left[
\begin{array}
[c]{cccc}%
\tilde{\eta}_{w}^{c} & 0 & N_{1w}^{sr} & N_{2w}\\
0 & \tilde{\eta}_{n}^{s}\tilde{\eta}_{m}^{r} & N_{3nm}^{sr} & N_{4nm}%
\end{array}
\right]  ,
\]
where \ $N_{1w}^{sr},$ $N_{2w},$ $N_{3nm}^{sr}$ and $N_{4nm}$ can be freely
chosen. Of course, different choices of $N_{\tilde{\Delta}}^{\Gamma}$ give
rise to ill/well-posed evolution equations.

Let us now study the characteristic structures of the system and the
subsidiary system. In other words, we will study the Kronecker decomposition
of the pencils $\left[
\begin{array}
[c]{c}%
h_{~A}^{\alpha}\mathfrak{N}_{~\alpha}^{Aq}\\
C_{A}^{\Gamma0}\mathfrak{N}_{~\alpha}^{Aq}%
\end{array}
\right]  l_{q}$ and $\left[
\begin{array}
[c]{c}%
C_{A}^{\Delta d}h_{~\Gamma}^{A}\\
M_{A}^{\tilde{\Delta}z}h_{~\Gamma}^{A}%
\end{array}
\right]  l_{d}$ with $l_{q}=-\lambda n_{a}+k_{a}$ and $k_{a}\left(
\partial_{t}\right)  ^{a}=0$. In their pullback version to $\Sigma_{t}$, these
pencils are
\begin{align}
&  \left[
\begin{array}
[c]{c}%
h_{~A}^{\alpha}\mathfrak{N}_{~\alpha}^{Aq}\\
C_{A}^{\Gamma0}\mathfrak{N}_{~\alpha}^{Aq}%
\end{array}
\right]  l_{q}\phi^{\alpha}\nonumber\\
&  =\left[
\begin{array}
[c]{ccc}%
\left(  -\lambda-\left(  \beta.k\right)  \right)  & 0 & -Nk^{w}\\
0 & \left(  -\lambda-\left(  \beta.k\right)  \right)  & 0\\
-Nk_{s} & 0 & \left(  -\lambda-\left(  \beta.k\right)  \right)  \tilde{\eta
}_{s}^{w}\\
0 & k_{s} & 0\\
0 & 0 & k_{[s}\tilde{\eta}_{y]}^{w}%
\end{array}
\right]  \left[
\begin{array}
[c]{c}%
\tilde{u}^{0}\\
\phi\\
\tilde{u}_{w}%
\end{array}
\right]  , \label{Eq_W_pencil_1_b}%
\end{align}%
\begin{equation}
\left[
\begin{array}
[c]{c}%
C_{A}^{\Delta q}h_{~\Gamma}^{A}\\
M_{A}^{\tilde{\Delta}q}h_{~\Gamma}^{A}%
\end{array}
\right]  l_{q}\psi^{\Gamma}=\left[
\begin{array}
[c]{cc}%
\tilde{\eta}_{r}^{w}\left(  -\lambda-\left(  \beta.k\right)  \right)  & 0\\
0 & \tilde{\eta}_{g}^{[w}\tilde{\eta}_{h}^{y]}\left(  -\lambda-\left(
\beta.k\right)  \right) \\
k_{[s}\tilde{\eta}_{r]}^{w} & 0\\
0 & k_{q}\varepsilon^{qwy}%
\end{array}
\right]  \left[
\begin{array}
[c]{c}%
\psi_{1w}\\
\psi_{2wy}%
\end{array}
\right]  . \label{Eq_W_pencil_sub_1}%
\end{equation}
Where $\varepsilon^{qwy}:=\tilde{n}_{d}\varepsilon^{dqwy}$ is the Levi-Civita
tensor over $\Sigma_{t}$; and all the lowercase indices $w,s,y,g,q$, and any
other lowercase indices that appear until the end of this section, run from
$1$ to $3$. We also note that $\left[
\begin{array}
[c]{c}%
h_{~A}^{\alpha}\mathfrak{N}_{~\alpha}^{Aq}\\
C_{A}^{\Gamma0}\mathfrak{N}_{~\alpha}^{Aq}%
\end{array}
\right]  l_{q}$ $\in%
\mathbb{R}
^{11\times5}$, $\left[
\begin{array}
[c]{c}%
C_{A}^{\Delta d}h_{~\Gamma}^{A}\\
M_{A}^{\tilde{\Delta}z}h_{~\Gamma}^{A}%
\end{array}
\right]  l_{d}\in%
\mathbb{R}
^{10\times6}$, $h_{~A}^{\alpha}\mathfrak{N}_{~\alpha}^{Aq}l_{q}\in%
\mathbb{R}
^{5\times5}$, $C_{A}^{\Gamma0}\mathfrak{N}_{~\alpha}^{Aq}l_{q}\in%
\mathbb{R}
^{6\times5}$, $C_{A}^{\Delta d}h_{~\Gamma}^{A}l_{d}\in%
\mathbb{R}
^{6\times6}$\ \ and $M_{A}^{\tilde{\Delta}z}h_{~\Gamma}^{A}l_{d}\in%
\mathbb{R}
^{4\times6}$ in their pullbacked version.

Let us now study the matrix $\left(  C_{B}^{\Delta0}\mathfrak{N}_{~\alpha
}^{Bq}\right)  l_{q}$, whose expression
\[
\left(  C_{B}^{\Delta0}\mathfrak{N}_{~\alpha}^{Bq}\right)  l_{q}\phi^{\alpha
}=\left[
\begin{array}
[c]{ccc}%
0 & \tilde{\eta}_{s}^{q} & 0\\
0 & 0 & \tilde{\eta}_{s}^{[q}\tilde{\eta}_{y}^{w]}%
\end{array}
\right]  k_{q}\left[
\begin{array}
[c]{c}%
\tilde{u}^{0}\\
\phi\\
\tilde{u}_{w}%
\end{array}
\right]  ,
\]
is obtained from (\ref{Eq_W_pencil_1_b}).

For each $k_{q}$,
\begin{align*}
k_{z}\left(  M_{A}^{\tilde{\Delta}z}h_{~\Delta}^{A}\right)  \left(
C_{B}^{\Delta0}\mathfrak{N}_{~\alpha}^{Bq}\right)  k_{q}  &  =0,\\
k_{z}\left[
\begin{array}
[c]{cc}%
\tilde{\eta}_{[f}^{z}\tilde{\eta}_{r]}^{s} & 0\\
0 & \tilde{n}_{d}\varepsilon^{dzsw}%
\end{array}
\right]  \left[
\begin{array}
[c]{ccc}%
0 & \tilde{\eta}_{s}^{q} & 0\\
0 & 0 & \tilde{\eta}_{s}^{[q}\tilde{\eta}_{y}^{w]}%
\end{array}
\right]  k_{q}  &  =0,
\end{align*}
i.e. $k_{z}\left(  M_{A}^{\tilde{\Delta}z}h_{~\Delta}^{A}\right)  $ expands
the left kernel of $\left(  C_{B}^{\Delta0}\mathfrak{N}_{~\alpha}^{Bq}\right)
k_{q}$ and therefore the condition v) of the theorem
\ref{Theorem_coef_const_2} is satisfied. Furthermore, this shows that
\begin{align}
\dim\left(  left\_\ker\left(  C_{B}^{\Delta0}\mathfrak{N}_{~\alpha}^{Bq}%
k_{q}\right)  \right)   &  =3,\label{Eq_W_lk_1}\\
rank\left(  C_{B}^{\Delta0}\mathfrak{N}_{~\alpha}^{Bq}k_{q}\right)   &
=3,\label{Eq_W_ra_1}\\
\dim\left(  right\_\ker\left(  C_{B}^{\Delta0}\mathfrak{N}_{~\alpha}^{Bq}%
k_{q}\right)  \right)   &  =2. \label{Eq_W_rk_1}%
\end{align}

This result allows us to give the Kronecker structure associated to the Wave
equation, i.e., the Kronecker structure of the pencil (\ref{Eq_W_pencil_1_b}).
It is%
\[
J_{1}\left(  N\sqrt{k.k}-\beta.k\right)  ,J_{1}\left(  -N\sqrt{k.k}%
-\beta.k\right)  ,3\times L_{1}^{T},3\times L_{0}^{T}.
\]
The $3\times L_{1}^{T},3\times L_{0}^{T}$ blocks are justified by
(\ref{Eq_W_lk_1}) and (\ref{Eq_W_ra_1}) as explained in subsubsection
\ref{Kronecker_decomposition}. The Jordan part is justified by giving
explicitly the generalized eigenvectors
\[
\left(  \delta\phi_{\lambda_{1}}\right)  ^{\beta}=\left[
\begin{array}
[c]{c}%
1\\
0\\
-\frac{k_{w}}{\sqrt{\left(  k.k\right)  }}%
\end{array}
\right]  ,\text{ \ \ \ }\left(  \delta\phi_{\lambda_{2}}^{2}\right)  ^{\beta
}=\left[
\begin{array}
[c]{c}%
1\\
0\\
\frac{k_{w}}{\sqrt{\left(  k.k\right)  }}%
\end{array}
\right]  .
\]
They are associated to the generalized eigenvalues $\lambda_{1}=N\sqrt
{k.k}-\beta.k$ and $\lambda_{2}=-N\sqrt{k.k}-\beta.k$. These eigenvalues were
called "the physical" eigenvalues in subsection
\ref{Proof_Theorem_coef_const_2}.

Now, let us study the characteristic structure of $h_{~A}^{\alpha}%
\mathfrak{N}_{~\alpha}^{Aq}l_{q}$, whose explicit expression is
\[
h_{~A}^{\alpha}\mathfrak{N}_{~\alpha}^{Aq}l_{q}\phi^{\alpha}=\left[
\begin{array}
[c]{ccc}%
\left(  -\lambda-\left(  \beta.k\right)  \right)  & 0 & -Nk^{w}\\
0 & \left(  -\lambda-\left(  \beta.k\right)  \right)  & 0\\
-Nk_{s} & 0 & \left(  -\lambda-\left(  \beta.k\right)  \right)  \tilde{\eta
}_{s}^{w}%
\end{array}
\right]  \left[
\begin{array}
[c]{c}%
\tilde{u}^{0}\\
\phi\\
\tilde{u}_{w}%
\end{array}
\right]  .
\]
Since $\left(  \lambda_{1,2},\left(  \delta\phi_{\lambda_{1,2}}\right)
^{\beta}\right)  $ are the generalized eigenvalues and eigenvectors of
$\left[
\begin{array}
[c]{c}%
h_{~A}^{\alpha}\mathfrak{N}_{~\alpha}^{Aq}\\
C_{A}^{\Gamma0}\mathfrak{N}_{~\alpha}^{Aq}%
\end{array}
\right]  l_{q}$,\ \ they are also the eigenvalues and eigenvectors of
$h_{~A}^{\alpha}\mathfrak{N}_{~\alpha}^{Aq}l_{q}$. Moreover, $h_{~A}^{\alpha
}\mathfrak{N}_{~\alpha}^{Aq}l_{q}$ has $3$ more eigenvalues $\pi
_{1,2,3}\left(  k\right)  =-\beta.k$, these were called "the constraints 1"
eigenvalues in subsection \ref{Proof_Theorem_coef_const_2}. \ These
eigenvalues have the associated eigenvectors
\begin{align*}
\left(  \delta\phi_{\pi_{1}}\right)  ^{\beta}  &  =\left[
\begin{array}
[c]{c}%
0\\
1\\
0
\end{array}
\right]  \text{ \ \ }\left(  \delta\phi_{\pi_{2}}\right)  ^{\beta}=\left[
\begin{array}
[c]{c}%
0\\
0\\
v_{1w}%
\end{array}
\right] \\
\left(  \delta\phi_{\pi_{2}}\right)  ^{\beta}  &  =\left[
\begin{array}
[c]{c}%
0\\
0\\
v_{2w}%
\end{array}
\right]  \text{\ }%
\end{align*}
where $v_{1},v_{2}$ are linearly independent vectors, and they are defined by
the following conditions
\[
\left(  v_{1}.v_{2}\right)  =\left(  v_{1,2}.k\right)  =\left(  v_{1,2}%
.\tilde{m}\right)  =0.
\]

Let us now study the Kronecker decomposition of the pencil $\left[
\begin{array}
[c]{c}%
C_{A}^{\Delta q}h_{~\Gamma}^{A}\\
M_{A}^{\tilde{\Delta}q}h_{~\Gamma}^{A}%
\end{array}
\right]  l_{q}$ \ (see eq. (\ref{Eq_W_pencil_sub_1})). This pencil is
associated to the constraints as can be seen from (\ref{Eq_W_D_C_M_3}) and
(\ref{Eq_sub_onda_1}); and its Kronecker decomposition is given by
\[
3\times J_{1}\left(  -\beta.k\right)  ,3\times L_{1}^{T},1\times L_{0}^{T},
\]
as we will show below.

We begin by studying the Jordan blocks. As it was shown in subsubsection
\ref{Kro_sub}, the generalized eigenvectors of this pencil are obtained by
projecting $\left\{  \left(  \delta\phi_{\pi_{1}}\right)  ^{\beta},\left(
\delta\phi_{\pi_{2}}\right)  ^{\beta},\left(  \delta\phi_{\pi_{3}}\right)
^{\beta}\right\}  $ with $C_{B}^{\Gamma0}\mathfrak{N}_{~\beta}^{Bq}k_{q}$,
that is,
\begin{align*}
\left[  \delta\psi_{\pi_{1}}^{\Gamma},\delta\psi_{\pi_{2}}^{\Gamma},\delta
\psi_{\pi_{3}}^{\Gamma}\right]   &  :=C_{B}^{\Gamma0}\mathfrak{N}_{~\beta
}^{Bq}k_{q}\left[  \left(  \delta\phi_{1}\right)  ^{\beta},\left(  \delta
\phi_{\pi_{2}}\right)  ^{\beta},\left(  \delta\phi_{\pi_{3}}\right)  ^{\beta
}\right]  ,\\
&  =\left[
\begin{array}
[c]{ccc}%
0 & k_{s} & 0\\
0 & 0 & k_{[s}\tilde{\eta}_{y]}^{w}%
\end{array}
\right]  \left[
\begin{array}
[c]{ccc}%
0 & 0 & 0\\
1 & 0 & 0\\
0 & v_{1w} & v_{2w}%
\end{array}
\right]  ,\\
&  =\left[
\begin{array}
[c]{ccc}%
k_{s} & 0 & 0\\
0 & k_{[s}v_{1y]} & k_{[s}v_{2y]}%
\end{array}
\right]  .
\end{align*}
Where the eigenvectors are the columns of this matrix and they are associated
to the eigenvalues "the constraints 1" $\pi_{1,2,3}\left(  k\right)
=-\beta.k$.

To complete the Kronecker decomposition, we need to find the
\[
\dim\left(  left\_\ker\left(  M_{A}^{\tilde{\Delta}z}h_{~\Delta}^{A}%
k_{z}\right)  \right)  ,
\]
where $M_{A}^{\tilde{\Delta}z}h_{~\Delta}^{A}k_{z}$ is given by%

\[
M_{A}^{\tilde{\Delta}z}h_{~\Delta}^{A}k_{z}\psi^{\Delta}=\left[
\begin{array}
[c]{cc}%
\tilde{\eta}_{[s}^{z}\tilde{\eta}_{r]}^{w} & 0\\
0 & \varepsilon^{zwy}%
\end{array}
\right]  k_{z}\left[
\begin{array}
[c]{c}%
\psi_{1w}\\
\psi_{2wy}%
\end{array}
\right]  .
\]
Notice that
\[
span\left\langle \left[
\begin{array}
[c]{cc}%
\tilde{n}_{f}k_{f}\varepsilon^{fgsr} & 0
\end{array}
\right]  \right\rangle =left\_\ker\left(  M_{A}^{\tilde{\Delta}z}h_{~\Delta
}^{A}k_{z}\right)
\]
therefore
\[
\dim\left(  left\_\ker\left(  M_{A}^{\tilde{\Delta}z}h_{~\Delta}^{A}%
k_{z}\right)  \right)  =1.
\]
Using this result and the expressions (\ref{Eq_W_lk_1}) and
(\ref{eq_sub_kron_1}), we conclude that the rest of the Kronecker
decomposition is $3\times L_{1}^{T},1\times L_{0}^{T}$.

Finally, we give the basis that diagonalizes matrix $C_{A}^{\Gamma
z}h_{~\Delta}^{A}l_{z}$. This matrix
\[
C_{A}^{\Gamma z}h_{~\Delta}^{A}l_{z}\psi^{\Delta}=\left[
\begin{array}
[c]{cc}%
\tilde{\eta}_{r}^{w}\left(  -\lambda-\left(  \beta.k\right)  \right)  & 0\\
0 & \tilde{\eta}_{g}^{[w}\tilde{\eta}_{h}^{y]}\left(  -\lambda-\left(
\beta.k\right)  \right)
\end{array}
\right]  \left[
\begin{array}
[c]{c}%
\psi_{1w}\\
\psi_{2wy}%
\end{array}
\right]
\]
can be read from (\ref{Eq_W_pencil_sub_1}). By construction, we know that
$\left\{  \delta\psi_{\pi_{1}}^{\Gamma},\delta\psi_{\pi_{2}}^{\Gamma}%
,\delta\psi_{\pi_{3}}^{\Gamma}\right\}  $ are eigenvectors associated to the
eigenvalues $\pi_{1,2,3}\left(  k\right)  =-\beta.k$. We still need to find
$3$ more eigenvectors associated to the "Constraints 2" eigenvalues
$\rho_{1,2,3}\left(  k\right)  $. These eigenvectors $\left\{  \delta
\psi_{\rho_{1}}^{\Delta},\delta\psi_{\rho_{2}}^{\Delta},\delta\psi_{\rho_{3}%
}^{\Delta}\right\}  $ are any $3$ vectors, linearly independent from $\left\{
\delta\psi_{\pi_{1}}^{\Gamma},\delta\psi_{\pi_{2}}^{\Gamma},\delta\psi
_{\pi_{3}}^{\Gamma}\right\}  $.

\section{Conclusions and discussion \label{Conclusions}}

In this paper, we have considered generic systems of first-order partial
differential equations that include differential constraints. We have shown
sufficient conditions for these systems to have first-order partial
differential subsidiary system (SS) with a strongly hyperbolic evolution. This
guarantees the constraint preservation.

We have shown that if the constraints of the system are defined by the Geroch
fields $C_{A}^{a\Gamma}$ and the system admits the integrability conditions
(\ref{eq_int_LE_1}) and (\ref{eq_int_LE_2}), then the SS exists and it is a
set of first-order partial differential equations. Furthermore, we have shown
that when the system only admits Geroch fields $C_{A}^{a\Gamma}$ and
$M_{A}^{\tilde{\Delta}z}$, it implies that the Kronecker structures of the
principal symbol, of the system and of the subsidiary system, does not include
$L_{m}^{T}$ blocks with $m\geq2$. Since most-known physical systems have these
kind of Geroch fields, we conclude that they only have $L_{1}^{T}-$blocks and
vanishing rows in their Kronecker decomposition. On the other hand, this
connection between Geroch fields and the Kronecker structure indicates a
possible extension to first-order partial differential equations,\ of the
classifications presented in the cases of ordinary differential equations
\cite{gantmacher1992theory} and algebraic differential equations
\cite{kunkel2006differential}. In these latter classifications, the Kronecker
structure of the principal symbol is used to find the integrability conditions
(as we have done here) and the solutions of the system. Currently, we are
working on extending the results presented here to systems that admit other
kinds of Geroch fields.

The study of strong hyperbolicity of the SS is performed in the case of
constant coefficients and presented as a continuation of the work
\cite{Abalos:2018uwg}. We have given a complete analysis of the characteristic
structure of the system, showing how can be chosen the propagation velocities
of the physical fields and of the constraints. As in \cite{Abalos:2018uwg},
the analysis is algebraic and pseudo-differential, so the possible non
analyticity in the evolution equations as well as in the subsidiary systems
may introduce causality issues. However, the steps of the proofs presented
here can be readapted to each physical system (including the quasi-linear
ones) avoiding the non analytical pseudo-differential reductions. This
procedure will/may come at the cost of less freedom in the choice of the
propagation velocities of the constraints.

With the new tools developed here, it seems natural to continue with the study
of boundary conditions that guarantee the constraint preservation. We have
made great progress in this direction and we are currently writing an article
where we generalize the ideas of \cite{Calabrese:2001kj},
\cite{Calabrese:2002xy}, \cite{Tarfulea:2013qjq}, \cite{sarbach2012continuum}
to the variable and quasi-linear coefficient cases.

\section*{Acknowledgements}
\addcontentsline{toc}{section}{Acknowledgement}

I would like to thank Federico Carrasco for many interesting comments and
suggestions. I am also grateful to Carlos Olmos, David Hilditch and Oscar
Reula for the discussions and ideas exchanged throughout this work. This
research work was partially supported by CONICET.

\begin{appendices}
\section{Lemmas\label{Ap_lemmas}}

In this appendix, we introduce some lemmas about the Kronecker decomposition
of a pencil. These results are used in the proof of Theorem SH of the SS.

Consider the following matrix pencil
\begin{equation}
\lambda I_{\beta}^{A}+K_{\beta}^{A}=\lambda\left[
\begin{array}
[c]{c}%
-\delta_{\beta}^{\alpha}\\
0
\end{array}
\right]  +\left[
\begin{array}
[c]{c}%
A_{\beta}^{\alpha}\\
C_{\beta}^{\Gamma}%
\end{array}
\right]  , \label{pencil_1}%
\end{equation}
such that $\delta_{\beta}^{\alpha},A_{\beta}^{\alpha}\in%
\mathbb{R}
^{u\times u}$, $C_{\beta}^{\Gamma}\in%
\mathbb{R}
^{c\times u}$, $\delta_{\beta}^{\alpha}$ is the identity matrix, $I_{\beta
}^{A}=\left[
\begin{array}
[c]{c}%
-\delta_{\beta}^{\alpha}\\
0
\end{array}
\right]  $ and $K_{\beta}^{A}=\left[
\begin{array}
[c]{c}%
A_{\beta}^{\alpha}\\
C_{\beta}^{\Gamma}%
\end{array}
\right]  $.

Since $I_{\beta}^{A}$ has only trivial right kernel, the Kronecker
decomposition of this pencil may only include Jordan blocks (with $\lambda
_{i}$ as the generalized eigenvalues)
\[
J_{m}\left(  \lambda_{i}\right)  =\left[
\begin{array}
[c]{cccc}%
\left(  \lambda-\lambda_{i}\right)  & 1 & 0 & 0\\
0 & \left(  \lambda-\lambda_{i}\right)  & 1 & 0\\
0 & 0 & ... & 1\\
0 & 0 & 0 & \left(  \lambda-\lambda_{i}\right)
\end{array}
\right]  \in%
\mathbb{C}
^{m\times m},
\]
$L_{m}^{T}$ blocks%
\begin{equation}
L_{m}^{T}=\left[
\begin{array}
[c]{ccccc}%
\lambda & 0 & 0 & 0 & 0\\
1 & \lambda & 0 & 0 & 0\\
0 & 1 & ... & 0 & 0\\
0 & 0 & ... & \lambda & 0\\
0 & 0 & 0 & 1 & \lambda\\
0 & 0 & 0 & 0 & 1
\end{array}
\right]  \in%
\mathbb{C}
^{\left(  m+1\right)  \times m}, \label{bloq_LmT_1}%
\end{equation}
and zero rows (called here $L_{0}^{T}$).

\begin{lemma}
\label{Lemma_1}a) Let
\begin{equation}
J,m_{0}\times L_{0}^{T},m_{1}\times L_{1}^{T},m_{2}\times L_{2}^{T}%
,...,m_{n}\times L_{n}^{T} \label{estruc_1}%
\end{equation}
be the Kronecker structure of $\lambda I_{\beta}^{A}+K_{\beta}^{A}$, where
$J\in%
\mathbb{C}
^{a\times a}$ includes only the Jordan blocks, then%
\begin{align}
m_{0}+m_{1}+m_{2}+...+m_{n}  &  =c,\label{eq_mi_c_1}\\
a+m_{1}+2m_{2}+...+nm_{n}  &  =u. \label{eq_mi_c_2}%
\end{align}

b) For any $\lambda$ other than the generalized eigenvalues, it holds that
\begin{equation}
\dim\left(  left\_\ker\left(  \lambda I_{\beta}^{A}+K_{\beta}^{A}\right)
\right)  =c. \label{eq_dim_left_ker_1}%
\end{equation}

c) Let (\ref{estruc_1}) with
\[
J=d_{1}\times J_{1}\left(  \lambda_{1}\right)  ,d_{2}\times J_{1}\left(
\lambda_{2}\right)  ,...,d_{q}\times J_{1}\left(  \lambda_{q}\right)  ,
\]
and $\lambda_{1}<...<\lambda_{q}$ be the Kronecker structure of $\lambda
I_{\beta}^{A}+K_{\beta}^{A}$, then
\begin{align}
\dim\left(  left\_\ker\left(  \lambda I_{\beta}^{A}+K_{\beta}^{A}\right)
\right)   &  =c\text{ \ \ \ }\forall\lambda\neq\lambda_{i}\text{ with
}i=1,...,q,\label{eq_lk_pen_1}\\
\dim\left(  left\_\ker\left(  \left.  \lambda I_{\beta}^{A}+K_{\beta}%
^{A}\right\vert _{\lambda=\lambda_{i}}\right)  \right)   &  =c+d_{i}.
\label{eq_lk_pen_2}%
\end{align}

\end{lemma}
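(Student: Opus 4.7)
The plan is to reduce everything to a direct count of rows, columns, and ranks performed block by block in the Kronecker normal form of $\lambda I_\beta^A + K_\beta^A$. Recall that there exist constant (i.e.\ $\lambda$-independent) invertible matrices $Y\in\mathbb{C}^{(u+c)\times(u+c)}$ and $W\in\mathbb{C}^{u\times u}$ such that
\[
Y\,(\lambda I_\beta^A+K_\beta^A)\,W \;=\; \mathrm{diag}\bigl(J,\underbrace{L_0^T,\ldots,L_0^T}_{m_0},\underbrace{L_1^T,\ldots,L_1^T}_{m_1},\ldots,\underbrace{L_n^T,\ldots,L_n^T}_{m_n}\bigr),
\]
with $J\in\mathbb{C}^{a\times a}$ gathering the Jordan blocks. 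Before using this decomposition I would first justify that no non-transposed $L_m$ blocks appear: an $L_m$ block has nontrivial right kernel for every $\lambda$, which would force $\lambda I_\beta^A+K_\beta^A$ itself to have nontrivial right kernel for every $\lambda$, contradicting that the top block $-\delta_\beta^\alpha$ of $I_\beta^A$ already has trivial right kernel. This exclusion is the only genuinely structural step; everything else is bookkeeping invariant under multiplication by $Y,W$.

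For part (a), I would count. A Jordan block $J_m$ is $m\times m$ while $L_m^T$ (as in (\ref{bloq_LmT_1})) is $(m+1)\times m$; in particular $L_0^T$ contributes one row and zero columns. Counting columns gives $a+m_1+2m_2+\cdots+nm_n=u$, which is (\ref{eq_mi_c_2}). Counting rows gives $a+\sum_{i=0}^{n}(i+1)m_i=u+c$; subtracting the column count yields $\sum_{i=0}^{n}m_i=c$, which is (\ref{eq_mi_c_1}).

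For parts (b) and (c), I would compute the left-kernel dimension contribution of each block, then sum. For $\lambda$ not equal to any generalized eigenvalue, every Jordan block is invertible and contributes $0$. Each $L_m^T$ with $m\geq 1$ always has full column rank $m$ (its lowest $m$ rows form a lower-triangular matrix with ones on the diagonal, independent of $\lambda$), hence contributes exactly $(m+1)-m=1$; each $L_0^T$ contributes $1$ by being a zero row. Summing and invoking part (a) gives $\sum_i m_i=c$, proving (\ref{eq_dim_left_ker_1}) and (\ref{eq_lk_pen_1}). For (\ref{eq_lk_pen_2}), when $\lambda=\lambda_i$ each of the $d_i$ copies of $J_1(\lambda_i)$ degenerates into a $1\times 1$ zero matrix, each adding one new vector to the left kernel; Jordan blocks attached to the other eigenvalues remain invertible and the $L_m^T$ contributions are unchanged. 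The left kernel therefore gains exactly $d_i$ dimensions, giving $c+d_i$.

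The main (and essentially only) obstacle is the exclusion of $L_m$ blocks described in the first paragraph; once that is secured, the rest is uniform block-by-block linear algebra. A secondary point deserving care is the degenerate case $m=0$: treating $L_0^T$ consistently as a $1\times 0$ block is what makes the two counting identities in part (a) come out cleanly, and it also ensures that the $m_0$ zero rows are correctly attributed to the left kernel in parts (b) and (c). No further machinery is needed beyond the standard Kronecker form, already invoked in subsubsection \ref{Kronecker_decomposition}.
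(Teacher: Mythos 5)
Your proof is correct and follows essentially the same route as the paper: part (a) by counting rows and columns of the Kronecker block diagonal, parts (b) and (c) by summing the left-kernel contributions of the individual $J_1$, $L_m^T$, and $L_0^T$ blocks. The only substantive addition is your explicit justification that no non-transposed $L_m$ blocks can appear (the paper states this in the preamble without proof, relying on the trivial right kernel of $I_\beta^A$); your argument for this is sound, and the minor slip in calling the bottom $m\times m$ submatrix of $L_m^T$ lower-triangular (it is in fact upper-triangular with unit diagonal) does not affect the conclusion of full column rank.
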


\begin{proof}
a) The pencil $\lambda I_{\beta}^{A}+K_{\beta}^{A}$ is a $(u+c)\times u$
matrix so its Kronecker matrix is of the same size. Therefore, the number of
columns of $J\in%
\mathbb{C}
^{a\times a}$ plus that of the $L_{m}^{T}\in%
\mathbb{C}
^{m+1\times m}$ blocks in (\ref{estruc_1}) has to be equal to $u$, i.e.,
\[
a+m_{1}+2m_{2}+...+nm_{n}=u.
\]
This concludes the proof of eq. (\ref{eq_mi_c_2}).

The same analysis on the rows shows that
\[
a+m_{0}+2m_{1}+3m_{2}+...+\left(  n+1\right)  m_{n}=u+c.
\]
Subtracting these last two expressions, we obtain the eq. (\ref{eq_mi_c_1})
and conclude the proof of (a). Notice that (\ref{eq_mi_c_1}) is independent of
the size of $J$.

b) Studying the left kernel of $\lambda I_{\beta}^{A}+K_{\beta}^{A}$ is
equivalent to studying the left kernel of its Kronecker structure, so we
assume that $\lambda I_{\beta}^{A}+K_{\beta}^{A}$ is already in its Kronecker form.

We begin by noting that the Jordan blocks only have non-trivial left kernel
when $\lambda=\lambda_{i}$. Since we are not considering these values of
$\lambda$, we conclude that to show (\ref{eq_dim_left_ker_1}), it is
sufficient to study the left kernel of the blocks $L_{m}^{T}$ and the zero
rows $L_{0}^{T}$.

The left kernel of each block $L_{m}^{T}$ (eq. (\ref{bloq_LmT_1})) has
dimension 1 and is expanded by the vector
\[
X=\left[  -1,\lambda,...,\left(  -1\right)  ^{m}\lambda^{m-1},\left(
-1\right)  ^{m+1}\lambda^{m}\right]  \in%
\mathbb{R}
^{1\times m+1}.
\]
Therefore, if we assume without loss of generality, that the Kronecker
structure of the pencil is given by (\ref{estruc_1}) we conclude that the left
kernel of the pencil has dimension
\[
m_{0}+m_{1}+m_{2}+...+m_{n}.
\]
Using the equation (\ref{eq_mi_c_1}) of a) we conclude the proof of b).

c) The proof of equation (\ref{eq_lk_pen_1}) is the same as the proof of b),
since b) is independent of the form of $J$.

The proof of eq. (\ref{eq_lk_pen_2}) is followed by noting that all Jordan
blocks in $J$ are of $1\times1$ and that each generalized eigenvalue
$\lambda_{i}$ has degeneracy $d_{i}$; therefore, when $\lambda=\lambda_{i}$
there are $d_{i}$ extra vectors added to the left kernel of the pencil
justifying the expression (\ref{eq_lk_pen_2}).
\end{proof}

\bigskip

The following lemma reveals the connection between $C_{\beta}^{\Gamma}$ and
the Kronecker structure of the pencil (\ref{pencil_1}).

We call
\begin{align*}
d  &  :=\dim\left(  right\_\ker\left(  C_{\beta}^{\Gamma}\right)  \right)  ,\\
r  &  :=rank\left(  C_{\beta}^{\Gamma}\right)  ,\\
s  &  :=\dim\left(  left\_\ker\left(  C_{\beta}^{\Gamma}\right)  \right)  ,
\end{align*}
and recall that by the rank--nullity theorem%
\begin{align}
u  &  =r+d,\label{eq_u_r_d_1}\\
c  &  =r+s. \label{eq_c_r_s_1}%
\end{align}

\begin{lemma}
\label{Lemma_2}Let the pencil (\ref{pencil_1}) such that part of its Kronecker
structure is%
\begin{equation}
d_{1}\times J_{1}\left(  \lambda_{1}\right)  ,d_{2}\times J_{1}\left(
\lambda_{2}\right)  ,...,d_{q}\times J_{1}\left(  \lambda_{q}\right)  ,
\label{eq_d1_J1_1}%
\end{equation}
with
\begin{equation}
d_{1}+d_{2}+...+d_{q}=d. \label{eq_d1_d2_d_1}%
\end{equation}
Then the complete Kronecker structure of this pencil is
\begin{equation}
d_{1}\times J_{1}\left(  \lambda_{1}\right)  ,d_{2}\times J_{1}\left(
\lambda_{2}\right)  ,...,d_{q}\times J_{1}\left(  \lambda_{q}\right)  ,s\times
L_{0}^{T},r\times L_{1}^{T}. \label{eq_penc_kro_1}%
\end{equation}

\end{lemma}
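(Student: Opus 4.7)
My plan is to combine the two counting identities from Lemma 1(a) with an explicit construction of a minimal left-null basis of the pencil. By Lemma 1(a) and the hypothesis $d_1+\cdots+d_q=d$, the block structure beyond the Jordan part must satisfy
\begin{equation*}
m_1+2m_2+\cdots+nm_n=u-d=r,\qquad m_0+m_1+\cdots+m_n=c=r+s.
\end{equation*}
It therefore suffices to prove $m_k=0$ for all $k\geq 2$, which immediately forces $m_1=r$ and $m_0=s$.

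The crucial structural consequence of the hypothesis is that $\ker C_\beta^\Gamma$ is $A$-invariant. Each $J_1(\lambda_i)$ block corresponds to an eigenvector $v$ of $A_\beta^\alpha$ with $Av=\lambda_i v$ and $Cv=0$; the count gives $d=\dim\ker C$ such eigenvectors, so they span $\ker C$ and stabilize it under $A$. Dually, the row space of $C$ inside $\mathbb{R}^u$ is invariant under right-multiplication by $A$, a fact I will use to solve the compatibility conditions below.

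Next I will exhibit a basis of the left null space at generic $\lambda$ whose degrees are exactly $\{0^{s},1^{r}\}$. The degree-zero piece consists of $[0,Y_z]$ where $\{Y_z\}_{z=1}^{s}$ spans $\mathrm{leftker}(C)$. For the degree-one piece I pick $\{Y_1^{(w)}\}_{w=1}^{r}\subset\mathbb{R}^c$ so that $\{Y_1^{(w)}C\}$ is a basis of the row space of $C$, set $X_0^{(w)}:=Y_1^{(w)}C$, and use the row-space invariance to produce $Y_0^{(w)}\in\mathbb{R}^c$ with $Y_0^{(w)}C=-Y_1^{(w)}CA$. A power-of-$\lambda$ expansion verifies that $[X_0^{(w)},\,Y_0^{(w)}+\lambda Y_1^{(w)}]$ annihilates the pencil from the left. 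Linear independence over $\mathbb{R}[\lambda]$ of the resulting $c=r+s$ vectors follows from examining leading-order coefficients: a vanishing $\mathbb{R}[\lambda]$-combination, projected onto the first $u$ coordinates, forces the coefficients multiplying the degree-one vectors to vanish because the $X_0^{(w)}$ are linearly independent constants, and the remaining equation in the last $c$ coordinates then forces the degree-zero coefficients to vanish.

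The proof closes by invoking the minimal-index characterization of the Kronecker form: if $\epsilon_1\leq\cdots\leq\epsilon_c$ are the left minimal indices and $\delta_1\leq\cdots\leq\delta_c$ the sorted degrees of any $\mathbb{R}[\lambda]$-basis of the left null module, then $\delta_i\geq\epsilon_i$ and $\sum_i\epsilon_i=m_1+2m_2+\cdots+nm_n=r$. My basis has $\sum_i\delta_i=0\cdot s+1\cdot r=r$, so termwise $\delta_i=\epsilon_i$ and the minimal indices are exactly $\{0^{s},1^{r}\}$, yielding $m_0=s$, $m_1=r$ and $m_k=0$ for $k\geq 2$. The main obstacle I anticipate is the degree-one construction and its independence verification: once the existence of $Y_0^{(w)}$ is secured via the $A$-invariance of $\ker C$ and independence is checked by leading-coefficient projection, the closing minimality argument reduces to standard bookkeeping from Kronecker theory.
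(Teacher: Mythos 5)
Your argument is correct in substance and reaches the same conclusion, but it takes a genuinely different route from the paper. Both proofs share the same foundation: Lemma~\ref{Lemma_1}(a) and the observation that the $d$ Jordan eigenvectors coming from the listed $J_1$ blocks span $\ker C_\beta^\Gamma$, so the Jordan part of the pencil is exactly $d\times d$ (you leave this last inference somewhat implicit --- an extra Jordan block would force an extra eigenvector in $\ker C$, which is impossible once the $d$ listed ones already span it; the paper draws this conclusion explicitly before invoking Lemma~\ref{Lemma_1}(a), and you should too, because otherwise the identity $m_1+2m_2+\cdots=u-d=r$ is unjustified). From there the paths diverge. The paper computes $m_0=s$ directly by noting that a $\lambda$-independent left-kernel vector must have zero first block and its second block in $\mathrm{left\_ker}(C)$, and then closes with a purely arithmetic deduction: combining $m_0+m_1+\cdots+m_n=c$, $m_1+2m_2+\cdots+nm_n=r$, and $m_0=s$ yields $m_2+2m_3+\cdots+(n-1)m_n=0$, forcing $m_i=0$ for $i\geq2$. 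You instead exploit the $A$-invariance of $\ker C$ (and its dual, the $A$-stability of the row space of $C$) to write down an explicit polynomial left-null basis of degrees $\{0^s,1^r\}$, and conclude via the termwise-domination property of minimal indices. Your construction is more informative --- it exhibits a concrete minimal polynomial basis of the left null module, which the paper never produces --- but it imports a stronger tool from Forney/Kronecker minimal-basis theory that is not among the facts the paper develops or cites, whereas the paper's argument needs nothing beyond the two counting identities already proved in Lemma~\ref{Lemma_1}. If you wanted to keep your constructive basis but avoid the minimal-index machinery, observe that once you have $s$ constant left-null vectors you already know $m_0\geq s$, and then the paper's arithmetic does the rest; your degree-one vectors become a bonus rather than a load-bearing step.
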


\begin{proof}
We begin by showing that if the conditions (\ref{eq_d1_J1_1}) and
(\ref{eq_d1_d2_d_1}) are satisfied, the system does not admit any other Jordan
blocks than those present in (\ref{eq_d1_J1_1}). Consider the set of
generalized eigenvectors $\delta\phi_{j}^{\beta\lambda_{i}}$ associated to the
structure (\ref{eq_d1_J1_1}), they compound the set $\left\{  \delta\phi
_{j}^{\beta\lambda_{i}}\right\}  $ of $d$ linearly independent vectors with
$j=1,...,d_{i}$ and $i=1,...,q$ \ such that
\begin{equation}
\left(  \lambda_{i}\left[
\begin{array}
[c]{c}%
-\delta_{\beta}^{\alpha}\\
0
\end{array}
\right]  +\left[
\begin{array}
[c]{c}%
A_{\beta}^{\alpha}\\
C_{\beta}^{\Gamma}%
\end{array}
\right]  \right)  \delta\phi_{j}^{\beta\lambda_{i}}=0. \label{eq_aut_gen_1}%
\end{equation}

This means that
\[
C_{\beta}^{\Gamma}\delta\phi_{j}^{\beta\lambda_{i}}=0\text{ \ \ }\forall
j=1,..d\text{ and }\forall i=1,...,q.
\]
Notice that any other possible generalized eigenvector satisfying
(\ref{eq_aut_gen_1}) should also belong to the right kernel of $C_{\beta
}^{\Gamma}$. But since $d=\dim\left(  right\_\ker\left(  C_{\beta}^{\Gamma
}\right)  \right)  $, the entire right kernel of $C_{\beta}^{\Gamma}$ must be
expanded by the $d$ vectors $\left\{  \delta\phi_{j}^{\beta\lambda_{i}%
}\right\}  $. In other words, the pencil cannot admit any extra Jordan block,
otherwise, we will reach a contradiction.

It only remains to show that the complete pencil structure is given by
(\ref{eq_penc_kro_1}). We call $J$ the part of the Jordan blocks
\[
J=d_{1}\times J_{1}\left(  \lambda_{1}\right)  ,d_{2}\times J_{1}\left(
\lambda_{2}\right)  ,...,d_{q}\times J_{1}\left(  \lambda_{q}\right)
\]
and we notice that $J$ is a $d\times d$ matrix, so using the equation
(\ref{eq_mi_c_2}) of the previous lemma, we obtain
\[
d+m_{1}+2m_{2}+...+nm_{n}=u.
\]

Since by (\ref{eq_u_r_d_1}), $u=r+d$, we conclude that
\begin{equation}
m_{1}+2m_{2}+...+nm_{n}=r. \label{eq_sum_r_1}%
\end{equation}

On the other hand, since $\dim\left(  left\_\ker\left(  C_{\beta}^{\Gamma
}\right)  \right)  =s$, the pencil $\lambda I_{\beta}^{A}+K_{\beta}^{A}$ has
$s$ linearly independent vectors $\left[
\begin{array}
[c]{cc}%
0 & v_{\Gamma}^{i}%
\end{array}
\right]  $, with $i=1,...,s$, belonging to the left kernel of the pencil.
These vectors do not depend on $\lambda$ since $C_{\beta}^{\Gamma}$ does not.
Moreover, due to the explicit form of $\lambda I_{\beta}^{A}+K_{\beta}^{A}$,
every other vector of the left kernel, independent of $\lambda$ should be a
linear combination of the previous ones. Therefore, the set of vectors
$\left[
\begin{array}
[c]{cc}%
0 & v_{\Gamma}^{i}%
\end{array}
\right]  $\ are associated to $s$ null rows in the Kronecker decomposition of
$\lambda I_{\beta}^{A}+K_{\beta}^{A}$, i.e.,
\[
m_{0}=s.
\]

Replacing this expression in (\ref{eq_sum_r_1}) and (\ref{eq_c_r_s_1}) we
obtain
\begin{align*}
c  &  =r+s\\
&  =m_{1}+2m_{2}+...+nm_{n}+m_{0}.
\end{align*}

Using now the eq. (\ref{eq_mi_c_1}), we conclude
\[
m_{0}+m_{1}+m_{2}+...+m_{n}=c=m_{1}+2m_{2}+...+nm_{n}+m_{0}%
\]
and, therefore,
\[
0=m_{2}+2m_{3}+...+\left(  n-1\right)  m_{n}.
\]
Since $m_{i}\geq0$ the unique solution for this equation is
\[
m_{i}=0\text{ para }i\geq2.
\]

Therefore,%
\[
c=r+s=m_{1}+m_{0}=m_{1}+s,
\]
from which we conclude that $m_{1}=r$, ending the proof.
\end{proof}

\section{Lapse and shift\label{App_coordenadas}}

In this appendix, we consider a space-time $M$, with the foliation $\Sigma
_{t}$ described in subsection \ref{n+1_decomposition_sec}, and we study the
equation
\begin{equation}
\nabla_{a}q^{b}=0, \label{Eq_ap_sis_1}%
\end{equation}
where $\nabla_{a}$ is any covariant derivative without torsion. We present
this simple system as an example of the process of rewriting the equations in
their $n+1$ version. Any more complex system can be handled in the same way by
repeating the steps presented here. We perform this $n+1$ decomposition by
using a different projector $\tilde{\eta}_{b}^{a}$ to the one $\eta_{b}^{a}$
used in section \ref{Seccion_Setting_1} and assuming that the system does not
have a background metric. This new projector is parametrized by the lapse
function $N$ and the shift vector $\beta^{a}.$ Of course, we recover the
standard results in the cases with a\ background metric \ 

We remark that theorems \ref{teorema_ec_ev_vin_1} and
\ \ref{Theorem_coef_const_2} can be re-adapted to the type of projection used
here. This is showing in the examples of section \ref{Examples}.

We begin by considering the definitions introduced in subsection
\ref{n+1_decomposition_sec}, and defining the shift vector $\beta^{a}$ such
that
\[
\beta^{a}n_{a}=0.
\]
It allows us to define the projector
\begin{equation}
\tilde{\eta}_{b}^{a}:=\delta_{b}^{a}-p^{a}n_{b}, \label{Eq_proy_b_1}%
\end{equation}
with
\[
p^{a}=t^{a}-\beta^{a}.
\]
Notice that this projector reduces to the $\eta_{b}^{a}$ projector when
$\beta^{a}=0$.

Using the coordinates $\left(  t,x^{i}\right)  $ adapted to the foliation
introduced in subsection \ref{n+1_decomposition_sec} we obtain
\[
\tilde{\eta}_{b}^{a}=\left[
\begin{array}
[c]{cccc}%
0 & 0 & 0 & 0\\
\beta^{1} & 1 & 0 & 0\\
\beta^{2} & 0 & 1 & 0\\
\beta^{3} & 0 & 0 & 1
\end{array}
\right]  ,
\]
where the indices $a$ and $b$ correspond to the columns and the rows of the
matrix respectively.

On the other hand, since
\[
p^{a}n_{a}=t^{a}n_{a}-\beta^{a}n_{a}=t^{a}n_{a}=1,
\]
it is easy to check that $\tilde{\eta}_{b}^{a}$ satisfies similar projector
properties as $\eta_{c}^{b}$, that is,
\[%
\begin{array}
[c]{ccccc}%
\tilde{\eta}_{b}^{a}\tilde{\eta}_{c}^{b}=\tilde{\eta}_{c}^{a}, &  &
\tilde{\eta}_{b}^{a}p^{b}=0, &  & \tilde{\eta}_{b}^{a}n_{a}=0.
\end{array}
\]

We introduce now the lapse function $N$ in the following way, consider the
projector
\begin{align}
\tilde{\eta}_{b}^{a}  &  :=\delta_{b}^{a}+\frac{1}{N}p^{a}\left(
-Nn_{b}\right)  ,\nonumber\\
&  =\delta_{b}^{a}+\tilde{m}^{a}\tilde{n}_{b}, \label{Eq_proy_b_3}%
\end{align}
where we have defined%
\begin{align*}
\tilde{m}^{a}  &  :=\frac{1}{N}p^{a},\\
\tilde{n}_{b}  &  :=-Nn_{b}\text{,}%
\end{align*}
such that
\begin{equation}
\tilde{m}^{a}\tilde{n}_{a}=-1. \label{Eq_nor_m_n_1}%
\end{equation}

With these definitions, we can project the eq. (\ref{Eq_ap_sis_1}) and its
variables as follows.

\begin{lemma}%
\begin{align}
\nabla_{a}q^{b}  &  =-\tilde{m}^{b}n_{a}\left(
\mathcal{L}%
_{p}\tilde{q}^{0}-N\left(  \left(  \tilde{q}^{w}S_{w}\right)  +\tilde{q}%
^{0}\left(  \tilde{m}^{d}Z_{d}\right)  \right)  \right)  \label{Eq_apendix_T}%
\\
&  +\tilde{\eta}_{w}^{b}n_{a}\left(
\mathcal{L}%
_{p}\tilde{q}^{w}-N\left(  \tilde{q}^{r}K_{~r}^{w}+\tilde{q}^{0}S^{w}\right)
\right) \\
&  +\tilde{\eta}_{w}^{b}\tilde{\eta}_{a}^{d}\left(  D_{d}\tilde{q}^{w}%
+\tilde{q}^{0}K_{~d}^{w}\right) \\
&  -\tilde{m}^{b}\tilde{\eta}_{a}^{d}\left(  D_{d}\tilde{q}^{0}+\tilde{q}%
^{w}K_{wd}-\tilde{q}^{0}Z_{d}\right)
\end{align}
with $\tilde{q}^{w},S^{r},$ $S_{r}$, $K_{ba}$ and $K_{~a}^{b}$ tangents to
$\Sigma_{t}$ and such that
\begin{align}
\tilde{q}^{w}  &  :=\tilde{\eta}_{r}^{w}q^{r},\label{Eq_apendix_T_1}\\
\tilde{q}^{0}  &  :=\tilde{n}_{r}q^{r}, \label{Eq_apendix_T_2}%
\end{align}%
\begin{equation}
Z_{d}:=\tilde{n}_{w}\nabla_{d}\tilde{m}^{w}, \label{Eq_apendix_T_3}%
\end{equation}%
\begin{align}
S_{r}  &  :=\tilde{\eta}_{r}^{w}\tilde{m}^{d}\nabla_{d}\tilde{n}%
_{w},\label{Eq_apendix_T_4}\\
S^{r}  &  :=\tilde{\eta}_{w}^{r}\tilde{m}^{d}\nabla_{d}\tilde{m}^{w},
\label{Eq_apendix_T_5}%
\end{align}%
\begin{align}
-K_{~a}^{b}  &  :=\tilde{\eta}_{w}^{b}\tilde{\eta}_{a}^{d}\nabla_{d}\tilde
{m}^{w},\label{Eq_apendix_T_6}\\
-K_{ba}  &  :=\tilde{\eta}_{a}^{d}\tilde{\eta}_{b}^{r}\nabla_{d}\tilde{n}_{r}
\label{Eq_apendix_T_7}%
\end{align}
The covariant derivative $D_{d}$ is defined over $\Sigma_{t}$ in the standard
form
\begin{equation}
D_{d}\tilde{q}^{w}:=\tilde{\eta}_{r}^{w}\tilde{\eta}_{d}^{s}\nabla_{s}%
\tilde{q}^{w}, \label{Eq_apendix_T_8}%
\end{equation}
and the Lie derivative $%
\mathcal{L}%
_{p}$ is%
\begin{align}%
\mathcal{L}%
_{p}  &  =%
\mathcal{L}%
_{\partial_{t}}-%
\mathcal{L}%
_{\beta},\label{Eq_apendix_T_9}\\
&  =\partial_{t}-%
\mathcal{L}%
_{\beta},
\end{align}
where the last equation holds since we are considering the coordinates
$\left(  t,x^{i}\right)  .$

Moreover, it holds
\begin{equation}
S_{r}=\tilde{\eta}_{r}^{w}\left(  D_{w}\left(  \ln N\right)  -Z_{w}\right)
\label{Eq_apendix_T_10}%
\end{equation}
and
\begin{align}%
\mathcal{L}%
_{\tilde{m}}\tilde{\eta}_{r}^{d}  &  =\tilde{m}^{d}D_{r}\left(  \ln N\right)
,\label{Eq_apendix_T_11}\\%
\mathcal{L}%
_{p}\tilde{\eta}_{r}^{d}  &  =0 \label{Eq_apendix_T_12}%
\end{align}

\end{lemma}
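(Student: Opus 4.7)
The plan is to apply the projector identity $\delta^a_b = \tilde{\eta}^a_b - \tilde{m}^a\tilde{n}_b$, equivalent to~(\ref{Eq_proy_b_3}) under the normalization $\tilde{m}^a\tilde{n}_a=-1$, twice on the tensor $\nabla_a q^b$: once to split the upper index of $q^b$, and once to split the derivative index of $\nabla_a$.

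First I would write $q^b = \tilde{\eta}^b_w\tilde{q}^w - \tilde{m}^b\tilde{q}^0$ with $\tilde{q}^0,\tilde{q}^w$ as in (\ref{Eq_apendix_T_1})--(\ref{Eq_apendix_T_2}), apply $\nabla_a$ via Leibniz, and then split $\nabla_a = \tilde{\eta}^d_a\nabla_d + n_a\,p^d\nabla_d$ by the same identity on the covector index. The $p^d\nabla_d$ part supplies the ``time'' derivative---on the scalar $\tilde{q}^0$ this is $\mathcal{L}_p$, and on $\tilde{q}^w$ it reproduces $\mathcal{L}_p\tilde{q}^w$ once the correction $-\tilde{q}^d\nabla_d p^b$ is absorbed into the $S^w$ and $K^w{}_r$ pieces of the $n_a\tilde{\eta}^b_w$ channel. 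The $\tilde{\eta}^d_a\nabla_d$ part supplies the spatial derivative $D_d$ of~(\ref{Eq_apendix_T_8}) acting on the components, together with factors $\tilde{\eta}^d_a\nabla_d\tilde{m}^b$ and $\tilde{\eta}^d_a\nabla_d\tilde{n}_r$ that split against the basis $\{\tilde{m}^b,\tilde{\eta}^b_w\}\otimes\{\tilde{n}_a,\tilde{\eta}^r_a\}$ exactly into the tensors $Z_d$, $K^b{}_d$ and $K_{bd}$ of~(\ref{Eq_apendix_T_3}), (\ref{Eq_apendix_T_6}), (\ref{Eq_apendix_T_7}); the analogous $\tilde{m}^d$-contractions produce $S^r$ and $S_r$ of~(\ref{Eq_apendix_T_4})--(\ref{Eq_apendix_T_5}). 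Grouping the resulting terms by the four independent channels $n_a\tilde{m}^b$, $n_a\tilde{\eta}^b_w$, $\tilde{\eta}^d_a\tilde{\eta}^b_w$ and $\tilde{\eta}^d_a\tilde{m}^b$ then reproduces the four-line formula in the statement.

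For the three auxiliary identities: (\ref{Eq_apendix_T_12}) follows from $\mathcal{L}_p p^a=0$ together with $\mathcal{L}_p n_a=0$, the latter because $n_a=\nabla_a t$ is a gradient, so $\nabla_{[a}n_{b]}=0$ and $\mathcal{L}_p n_a = \nabla_a(p^c n_c) = \nabla_a 1 = 0$. Identity~(\ref{Eq_apendix_T_11}) then comes from $\mathcal{L}_{\tilde{m}} = \mathcal{L}_{p/N}$ and the scaling rule for Lie derivatives along $fX$, specialized to $\tilde{\eta}^d_r$: the $f\mathcal{L}_X$ part vanishes by~(\ref{Eq_apendix_T_12}) and the remaining piece collapses to $\tilde{m}^d D_r\ln N$. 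Finally (\ref{Eq_apendix_T_10}) is obtained by expanding $S_r = \tilde{\eta}^w_r\tilde{m}^d\nabla_d(-Nn_w)$, trading $\tilde{m}^d\nabla_d n_w$ for $\nabla_w(\tilde{m}^d n_d) - n_d\nabla_w\tilde{m}^d$ via $\nabla_{[d}n_{w]}=0$, using $\tilde{m}^d n_d = 1/N$, and projecting tangentially with $\tilde{\eta}^w_r$ to recover $D_w\ln N$ and $Z_w$. The main obstacle is purely combinatorial: without a background metric, $\tilde{m}^a$ and $\tilde{n}_a$ are independent objects related only by $\tilde{m}^a\tilde{n}_a=-1$, so index raising and lowering is forbidden and one has to track carefully into which of the four tensorial channels each produced term lands.
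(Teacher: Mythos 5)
Your plan for the main identity is essentially the paper's: insert the two projector decompositions $\delta^b_c=\tilde{\eta}^b_c-\tilde{m}^b\tilde{n}_c$ and $\delta^d_a=\tilde{\eta}^d_a+p^dn_a$, apply Leibniz, absorb $p^d\nabla_d\tilde{q}^w$ into $\mathcal{L}_p\tilde{q}^w+\tilde{q}^r\nabla_rp^w$ and then split $\nabla_rp^w=\tilde{m}^w\nabla_rN+N\nabla_r\tilde{m}^w$, and sort the leftover $\nabla\tilde{m}$, $\nabla\tilde{n}$ pieces into $Z$, $K$, $S$ by the four channels. That is the computation in the appendix, line by line. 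Your treatment of~(\ref{Eq_apendix_T_10}) also matches the paper's: both trade $\tilde{m}^d\nabla_d n_w$ for $\nabla_w(\tilde{m}^d n_d)-n_d\nabla_w\tilde{m}^d$ using $\nabla_{[d}n_{w]}=0$, use $\tilde{m}^dn_d=1/N$, and project.

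The one place you genuinely depart from the paper is the order in which you establish~(\ref{Eq_apendix_T_11}) and~(\ref{Eq_apendix_T_12}). The paper computes $\mathcal{L}_{\tilde{m}}\tilde{\eta}^d_r=\tilde{m}^d D_r(\ln N)$ directly from $\tilde{\eta}^d_r=\delta^d_r+\tilde{m}^d\tilde{n}_r$ and then obtains $\mathcal{L}_p\tilde{\eta}^d_r=0$ by the informal step of "taking $N=1$" in that result. You go the other way: prove $\mathcal{L}_p\tilde{\eta}^d_r=0$ from first principles via $\mathcal{L}_p p^d=0$ and $\mathcal{L}_p n_r=\nabla_r(p^cn_c)=0$ (using that $n_a=\nabla_a t$ is closed and $\nabla$ is torsion-free), and then deduce $\mathcal{L}_{\tilde m}\tilde{\eta}^d_r$ by the scaling rule $\mathcal{L}_{fX}T^d_r=f\mathcal{L}_XT^d_r-T^c_rX^d\nabla_cf+T^d_cX^c\nabla_rf$ with $f=1/N$, $X=p$, observing that $\tilde{\eta}^d_cp^c=0$ kills one correction and $\mathcal{L}_p\tilde{\eta}=0$ kills the other, leaving exactly $\tilde{m}^dD_r(\ln N)$. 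Your order is in fact the more rigorous presentation: the "$N=1$" shortcut in the paper requires a re-interpretation of which quantities are being held fixed, while your version needs no such care. Both yield the same result.
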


\begin{proof}
We start by isolating $\delta_{a}^{d}$ from equations (\ref{Eq_proy_b_1}) and
(\ref{Eq_proy_b_3}) and rewriting (\ref{Eq_ap_sis_1}) as follows
\begin{align*}
\nabla_{a}q^{b}  &  =\delta_{a}^{d}\delta_{q}^{b}\nabla_{d}\left(  \delta
_{c}^{b}q^{c}\right)  ,\\
&  =\left(  \tilde{\eta}_{a}^{d}+p^{d}n_{a}\right)  \left(  \tilde{\eta}%
_{q}^{b}+p^{b}n_{q}\right)  \nabla_{d}\left(  \left(  \tilde{\eta}_{c}%
^{b}-\tilde{m}^{b}\tilde{n}_{c}\right)  q^{c}\right)  .
\end{align*}
Defining
\begin{align*}
\tilde{q}^{0}  &  :=\tilde{n}_{c}q^{c},\\
\tilde{q}^{b}  &  :=\tilde{\eta}_{c}^{b}q^{c}%
\end{align*}
and introducing these definitions in the last expression we obtain%
\begin{align*}
\nabla_{a}q^{b}  &  =-\tilde{m}^{b}n_{a}\left(  p^{d}\tilde{n}_{w}\left(
\nabla_{d}\tilde{q}^{w}-\tilde{q}^{0}\nabla_{d}\tilde{m}^{w}-\tilde{m}%
^{w}\nabla_{d}\tilde{q}^{0}\right)  \right) \\
&  +\tilde{\eta}_{w}^{b}n_{a}\left(  p^{d}\left(  \nabla_{d}\tilde{q}%
^{w}-\tilde{q}^{0}\nabla_{d}\tilde{m}^{w}-\tilde{m}^{w}\nabla_{d}\tilde{q}%
^{0}\right)  \right) \\
&  +\tilde{\eta}_{w}^{b}\tilde{\eta}_{a}^{d}\left(  \nabla_{d}\tilde{q}%
^{w}-\tilde{q}^{0}\nabla_{d}\tilde{m}^{w}-\tilde{m}^{w}\nabla_{d}\tilde{q}%
^{0}\right) \\
&  -\tilde{m}^{b}\tilde{\eta}_{a}^{d}\left(  \tilde{n}_{w}\left(  \nabla
_{d}\tilde{q}^{w}-\tilde{q}^{0}\nabla_{d}\tilde{m}^{w}-\tilde{m}^{w}\nabla
_{d}\tilde{q}^{0}\right)  \right)  .
\end{align*}
Using now that
\begin{align*}
\tilde{\eta}_{w}^{b}\tilde{m}^{w}  &  =0,\\
\tilde{n}_{w}\tilde{q}^{w}  &  =0,\\
\tilde{n}_{w}\nabla_{d}\tilde{q}^{w}  &  =-\tilde{q}^{w}\nabla_{d}\tilde
{n}_{w},\\
\tilde{n}_{w}\tilde{m}^{w}  &  =-1,\\
\tilde{n}_{w}\nabla_{d}\tilde{m}^{w}  &  =-\tilde{m}^{w}\nabla_{d}\tilde
{n}_{w},
\end{align*}
we conclude%
\begin{align*}
\nabla_{a}q^{b}  &  =-\tilde{m}^{b}n_{a}\left(  -p^{d}\left(  \left(
\tilde{q}^{w}-\tilde{q}^{0}\tilde{m}^{w}\right)  \nabla_{d}\tilde{n}%
_{w}\right)  +p^{d}\nabla_{d}\tilde{q}^{0}\right) \\
&  +\tilde{\eta}_{w}^{b}n_{a}\left(  p^{d}\left(  \nabla_{d}\tilde{q}%
^{w}-\tilde{q}^{0}\nabla_{d}\tilde{m}^{w}\right)  \right) \\
&  +\tilde{\eta}_{w}^{b}\tilde{\eta}_{a}^{d}\left(  \nabla_{d}\tilde{q}%
^{w}-\tilde{q}^{0}\nabla_{d}\tilde{m}^{w}\right) \\
&  -\tilde{m}^{b}\tilde{\eta}_{a}^{d}\left(  -\left(  \left(  \tilde{q}%
^{w}-\tilde{q}^{0}\tilde{m}^{w}\right)  \nabla_{d}\tilde{n}_{w}\right)
+\nabla_{d}\tilde{q}^{0}\right)  .
\end{align*}
Using also that
\begin{align*}
D_{a}\tilde{q}^{b}  &  :=\tilde{\eta}_{w}^{b}\tilde{\eta}_{a}^{d}\nabla
_{d}\tilde{q}^{w}=\tilde{\eta}_{w}^{b}\tilde{\eta}_{a}^{d}D_{d}\tilde{q}%
^{w},\\
D_{a}\tilde{q}^{0}  &  =\tilde{\eta}_{a}^{d}\nabla_{d}\tilde{q}^{0}%
=\tilde{\eta}_{a}^{d}D_{d}\tilde{q}^{0},\\%
\mathcal{L}%
_{p}\tilde{q}^{0}  &  =p^{d}\nabla_{d}\tilde{q}^{0},\\%
\mathcal{L}%
_{p}\tilde{q}^{w}  &  =p^{a}\nabla_{a}\tilde{q}^{w}-\tilde{q}^{a}\nabla
_{a}p^{w}\rightarrow%
\mathcal{L}%
_{p}\tilde{q}^{w}+\tilde{q}^{a}\nabla_{a}p^{w}=p^{a}\nabla_{a}\tilde{q}^{w},
\end{align*}
we conclude%
\begin{align*}
\nabla_{a}q^{b}  &  =-\tilde{m}^{b}n_{a}\left(  -p^{d}\left(  \tilde{q}%
^{w}-\tilde{q}^{0}\tilde{m}^{w}\right)  \nabla_{d}\tilde{n}_{w}+%
\mathcal{L}%
_{p}\tilde{q}^{0}\right) \\
&  +\tilde{\eta}_{w}^{b}n_{a}\left(
\mathcal{L}%
_{p}\tilde{q}^{w}+\tilde{q}^{r}\nabla_{r}p^{w}-\tilde{q}^{0}p^{d}\nabla
_{d}\tilde{m}^{w}\right) \\
&  +\tilde{\eta}_{w}^{b}\tilde{\eta}_{a}^{d}\left(  D_{d}\tilde{q}^{w}%
-\tilde{q}^{0}\nabla_{d}\tilde{m}^{w}\right) \\
&  -\tilde{m}^{b}\tilde{\eta}_{a}^{d}\left(  -\left(  \tilde{q}^{w}-\tilde
{q}^{0}\tilde{m}^{w}\right)  \nabla_{d}\tilde{n}_{w}+D_{d}\tilde{q}%
^{0}\right)  .
\end{align*}
Recalling that
\[
\tilde{m}^{w}=\frac{1}{N}p^{w},
\]
we rewrite $\nabla_{r}p^{w}$ as follows
\[
\nabla_{r}p^{w}=\tilde{m}^{w}\nabla_{r}N+N\nabla_{r}\tilde{m}^{w}.
\]
Replacing this expression in the previous development we arrive at the
following result%
\begin{align*}
\nabla_{a}q^{b}  &  =-\tilde{m}^{b}n_{a}\left(  -\left(  \tilde{q}^{w}%
-\tilde{q}^{0}\tilde{m}^{w}\right)  p^{d}\nabla_{d}\tilde{n}_{w}+%
\mathcal{L}%
_{p}\tilde{q}^{0}\right) \\
&  +\tilde{\eta}_{w}^{b}n_{a}\left(
\mathcal{L}%
_{p}\tilde{q}^{w}+\tilde{q}^{r}N\nabla_{r}\tilde{m}^{w}-\tilde{q}^{0}%
p^{d}\nabla_{d}\tilde{m}^{w}\right) \\
&  +\tilde{\eta}_{w}^{b}\tilde{\eta}_{a}^{d}\left(  D_{d}\tilde{q}^{w}%
-\tilde{q}^{0}\nabla_{d}\tilde{m}^{w}\right) \\
&  -\tilde{m}^{b}\tilde{\eta}_{a}^{d}\left(  -\left(  \tilde{q}^{w}-\tilde
{q}^{0}\tilde{m}^{w}\right)  \nabla_{d}\tilde{n}_{w}+D_{d}\tilde{q}%
^{0}\right)  .
\end{align*}
Finally, introducing the definitions
\begin{align*}
-K_{~w}^{b}  &  :=\tilde{\eta}_{w}^{b}\tilde{\eta}_{a}^{d}\nabla_{d}\tilde
{m}^{w},\\
-K_{ew}  &  :=\tilde{\eta}_{w}^{d}\tilde{\eta}_{e}^{r}\nabla_{d}\tilde{n}%
_{r},\\
Z_{d}  &  :=\tilde{n}_{w}\nabla_{d}\tilde{m}^{w}=-\tilde{m}^{w}\nabla
_{d}\tilde{n}_{w},\\
S_{r}  &  :=\tilde{\eta}_{r}^{w}\tilde{m}^{d}\nabla_{d}\tilde{n}_{w},\\
S^{r}  &  :=\tilde{\eta}_{w}^{r}\tilde{m}^{d}\nabla_{d}\tilde{m}^{w},
\end{align*}
we obtain%
\begin{align*}
\nabla_{a}q^{b}  &  =-\tilde{m}^{b}n_{a}\left(
\mathcal{L}%
_{p}\tilde{q}^{0}-N\left(  \left(  \tilde{q}^{w}S_{w}\right)  +\tilde{q}%
^{0}\left(  \tilde{m}^{d}Z_{d}\right)  \right)  \right) \\
&  +\tilde{\eta}_{w}^{b}n_{a}\left(
\mathcal{L}%
_{p}\tilde{q}^{w}-\tilde{q}^{r}NK_{~r}^{w}-N\tilde{q}^{0}S^{w}\right) \\
&  +\tilde{\eta}_{w}^{b}\tilde{\eta}_{a}^{d}\left(  D_{d}\tilde{q}^{w}%
+\tilde{q}^{0}K_{~d}^{w}\right) \\
&  -\tilde{m}^{b}\tilde{\eta}_{a}^{d}\left(  D_{d}\tilde{q}^{0}+\tilde{q}%
^{w}K_{wd}-\tilde{q}^{0}Z_{d}\right)
\end{align*}

In addition, we show that
\begin{align*}
\tilde{m}^{d}\nabla_{d}\tilde{n}_{b}  &  =-\tilde{m}^{d}\nabla_{d}\left(
N\nabla_{b}t\right)  =-\tilde{m}^{d}\nabla_{d}N\nabla_{b}t-\tilde{m}%
^{d}N\nabla_{d}\left(  \nabla_{b}t\right)  ,\\
&  =\frac{1}{N}\tilde{n}_{b}\tilde{m}^{d}\nabla_{d}N-\tilde{m}^{d}N\nabla
_{b}\left(  \nabla_{d}t\right)  ,\\
&  =\frac{1}{N}\tilde{n}_{b}\tilde{m}^{d}\nabla_{d}N+\tilde{m}^{d}N\nabla
_{b}\left(  \frac{1}{N}\tilde{n}_{d}\right)  ,\\
&  =\frac{1}{N}\tilde{n}_{b}\tilde{m}^{d}\nabla_{d}N-N\nabla_{b}\left(
\frac{1}{N}\right)  +\tilde{m}^{d}\nabla_{b}\tilde{n}_{d},\\
&  =\frac{1}{N}\tilde{n}_{b}\tilde{m}^{d}\nabla_{d}N+\frac{1}{N}\nabla
_{b}N+\tilde{m}^{d}\nabla_{b}\tilde{n}_{d},\\
&  =\frac{1}{N}\tilde{\eta}_{b}^{d}\nabla_{d}N+\tilde{m}^{d}\nabla_{b}%
\tilde{n}_{d},\\
&  =D_{b}\left(  \ln N\right)  -Z_{b},
\end{align*}
and
\begin{align*}%
\mathcal{L}%
_{\tilde{m}}\tilde{\eta}_{r}^{d}  &  =\tilde{m}^{q}\nabla_{q}\tilde{\eta}%
_{r}^{d}-\tilde{\eta}_{r}^{q}\nabla_{q}\tilde{m}^{d}+\tilde{\eta}_{q}%
^{d}\nabla_{r}\tilde{m}^{q},\\
&  =\tilde{m}^{q}\nabla_{q}\left(  \tilde{m}^{d}\tilde{n}_{r}\right)  -\left(
\delta_{r}^{q}+\tilde{m}^{q}\tilde{n}_{r}\right)  \nabla_{q}\tilde{m}%
^{d}+\left(  \delta_{q}^{d}+\tilde{m}^{d}\tilde{n}_{q}\right)  \nabla
_{r}\tilde{m}^{q},\\
&  =\tilde{n}_{r}\tilde{m}^{q}\nabla_{q}\tilde{m}^{d}+\tilde{m}^{d}\tilde
{m}^{q}\nabla_{q}\tilde{n}_{r}-\tilde{n}_{r}\tilde{m}^{q}\nabla_{q}\tilde
{m}^{d}+\tilde{m}^{d}\tilde{n}_{q}\nabla_{r}\tilde{m}^{q},\\
&  =\tilde{m}^{d}\tilde{m}^{q}\nabla_{q}\tilde{n}_{r}+\tilde{m}^{d}\tilde
{n}_{q}\nabla_{r}\tilde{m}^{q},\\
&  =\tilde{m}^{d}\left(  \tilde{m}^{q}\nabla_{q}\tilde{n}_{r}+Z_{r}\right)
,\\
&  =\tilde{m}^{d}D_{r}\left(  \ln N\right)  .
\end{align*}
Notice that $%
\mathcal{L}%
_{p}\tilde{\eta}_{r}^{d}$ can be obtained from the latter expression by taking
$N=1$, so that
\[%
\mathcal{L}%
_{p}\tilde{\eta}_{r}^{d}=0
\]

\end{proof}

\bigskip

Let us now consider the case when we have a Lorentzian metric $g_{ab}$, such
that in coordinates $\left(  t,x^{i}\right)  $ it has the following form%
\begin{equation}
ds^{2}=\left(  -N^{2}+\beta_{k}\beta^{k}\right)  dt^{2}+2\beta_{i}%
dtdx^{i}+\gamma_{ij}dx^{i}dx^{j}. \label{Eq_met_1}%
\end{equation}
Here $N$ and $\beta^{k}$ have been defined as before. In its matrix mode
$g_{ab}$ and its inverse $g^{bc}$ are given by%
\[
g_{ab}=\left[
\begin{array}
[c]{cc}%
-N^{2}+\beta_{k}\beta^{k} & \beta_{j}\\
\beta_{j} & \gamma_{ij}%
\end{array}
\right]  ,
\]%
\[
g^{bc}=\left[
\begin{array}
[c]{cc}%
-\frac{1}{N^{2}} & \frac{\beta^{j}}{N^{2}}\\
\frac{\beta^{j}}{N^{2}} & \gamma^{ij}-\frac{\beta^{i}\beta^{j}}{N^{2}}%
\end{array}
\right]  .
\]
With this ansatz, it is easy to check that%
\[
N=\frac{1}{\sqrt{-\nabla t.\nabla t}},
\]
and if we use the metric $g_{ab}$ (and its inverse) to raise and lower indices
we obtain that
\[
\tilde{m}^{b}=g^{bc}\tilde{n}_{c}.
\]
Notice that the latter expression is equivalent to%
\[
\tilde{n}^{b}=\tilde{m}^{b}%
\]
and furthermore $\tilde{n}_{c}$ is temporal due to the eq. (\ref{Eq_nor_m_n_1}%
), i.e.,%
\[
\tilde{n}^{b}\tilde{n}_{b}=\tilde{m}^{b}\tilde{n}_{b}=-1.
\]
Moreover, we see that the projector $\tilde{\eta}_{b}^{a}$ (eq.
(\ref{Eq_proy_b_1})), takes the form%
\begin{equation}
\tilde{\eta}_{b}^{a}:=\delta_{b}^{a}+\tilde{n}^{a}\tilde{n}_{b}.
\label{eq_proy_b}%
\end{equation}
When we lower the $a$ component with $g_{ac}$, the latter expression is a
Riemannian metric on $\Sigma_{t}$, it is
\begin{equation}
\tilde{\eta}_{ab}=g_{ab}+\tilde{n}_{a}\tilde{n}_{b}. \label{eq_3_metrica}%
\end{equation}

Notice also that when we do not consider a metric we have 2 unrelated types of
"extrinsic curvatures" $K_{~a}^{b}$ and $K_{ba}$. However, when we have a
metric like (\ref{Eq_met_1}) and we consider a Levi-Civita connection
$\nabla_{d}$ then
\begin{align*}
K_{ba}  &  =g_{bc}K_{~a}^{c},\\
S_{r}  &  =g_{rc}S^{c},
\end{align*}
furthermore
\[
Z_{d}:=\tilde{n}_{w}\nabla_{d}\tilde{m}^{w}=\tilde{n}_{w}\nabla_{d}\tilde
{n}^{w}=\frac{1}{2}\nabla_{d}\left(  \tilde{n}.\tilde{n}\right)  =0.
\]

\end{appendices}
\bigskip

%
%


\bibliographystyle{plain}
\bibliography{Constraint_preservation_Abalos}




\end{document}